\newtheorem{theorem}{Theorem}[section]
\newtheorem{corollary}[theorem]{Corollary}
\newtheorem{definition}[theorem]{Definition}
\newtheorem{example}[theorem]{Example}
\newtheorem{lemma}[theorem]{Lemma}
\newtheorem{proposition}[theorem]{Proposition}
\newtheorem{remark}[theorem]{Remark}
\newenvironment{proof}{\smallskip\noindent{\it Proof.}\rm}
                        {\hspace*{\fill} $\Box$\medskip}
\newenvironment{proofof}[1]{\smallskip\noindent{\bf #1}\rm}
                {\hspace*{\fill} $\Box$\medskip}
\newcommand\myIm{\mathrm{Im\,}}
\newcommand\myRe{\mathrm{Re\,}}
\newcommand\Pos{\mathrm{Pos\,}}
\newcommand\Ran{\mathrm{Ran\,}}
\newcommand\ad{\mathrm{ad}}
\newcommand\rmy{\mathrm{y}}
\begin{document}

\title[Inverse scattering for Miura potentials]%
{Inverse scattering for Schr\"odinger operators with Miura potentials, I.
Unique Riccati representatives and ZS-AKNS systems}

\author{C Frayer$^1$, R O Hryniv$^{2,3,4}$, Ya V Mykytyuk$^4$ and P A Perry$^5$}

\address{$^1$ Department of Mathematics, University of Wisconsin--Platteville,
Platteville, WI 53818, U.\,S.\,A.}

\address{$^2$ Institute for Applied Problems of Mechanics and Mathematics, 3b Naukova str., 79601 Lviv, Ukraine}
\address{$^3$ Institute of Mathematics, the University of Rzesz\'{o}w, 16\,A Rejtana str., 35-959 Rzesz\'{o}w, Poland}
\address{$^4$ Department of Mechanics and Mathematics, Lviv Franko National University, 1~Uni\-ver\-sy\-tets'ka str., 79602, Lviv, Ukraine}
\address{$^5$ Department of Mathematics, University of Kentucky, Lexington, KY 40506-0027, U.\,S.\,A.}

\eads{\mailto{frayerc@uwplatt.edu}, \mailto{rhryniv@iapmm.lviv.ua}, \mailto{yamykytyuk@yahoo.com}, \mailto{perry@ms.uky.edu}}


\begin{abstract}
This is the first in a series of papers on scattering theory for one-dimensional Schr\"{o}dinger operators with highly singular
potentials~$q\in H_{\mathrm{loc}}^{-1}(\mathbb{R})$.
In this paper, we study Miura potentials~$q$ associated to positive Schr\"{o}dinger operators that admit a Riccati representation~$q=u^{\prime}+u^{2}$ for a unique
 $u\in L^{1}(\mathbb{R})\cap L^{2}(\mathbb{R})$.
Such potentials have a well-defined reflection coefficient~$r(k)$ that satisfies~$|r(k)|<1$ and determines~$u$ uniquely. We show that the scattering map~$\mathcal{S}\,:\, u\mapsto r$ is real-analytic with real-analytic inverse. To do so, we exploit a natural complexification of the scattering map associated with the ZS-AKNS\ system. In subsequent papers, we will consider larger classes of potentials including singular potentials with bound states.
\end{abstract}

\ams{Primary: 34L25, Secondary: 34L40, 47L10, 81U40}

\noindent{\it Keywords\/}: inverse scattering, Schr\"odinger operator, Miura potential, ZS-AKNS systems





\section{Introduction}

This is the first of a series of papers on scattering theory for
one-dimensional Schr\"{o}dinger operators
\[
    S:= - \frac{\rmd^2}{\rmd x^2} + q
\]
on the line with highly singular potentials~$q$. Our goal is to extend the inverse scattering method as discussed e.g. in~\cite{Levitan:1987,DT:1979,Marchenko:1986,Faddeev:1959,Melin:1985} in order to study initial value problems for completely integrable dispersive equations with highly singular initial data.

It is natural to begin with the case where the Schr\"{o}dinger operator has no
bound states; the corresponding potentials then admit a Riccati representation given by the Miura map~\cite{Miura:1968}. Recall that the \emph{Miura map} is the nonlinear mapping%
\begin{footnote}{See~Subsection~\ref{ssec.notat} for explicit definitions of all the function spaces appearing in the paper}
\end{footnote}
\begin{equation}\label{eq.Miura}
    \eqalign{
    B\,:\, &L_{\mathrm{loc}}^{2}(\mathbb{R})
            \rightarrow H_{\mathrm{loc}}^{-1}(\mathbb{R}),\\
          &\hspace*{30pt} u \mapsto u'+u^{2}.}
\end{equation}
It is not difficult to see~\cite{Miura:1968} that if $u(x,t)$ is a smooth solution of the~mKdV equation, then
\[
    (Bu)(x,t)
        = \frac{\displaystyle{\partial u}}{\displaystyle{\partial x}} (x,t) + u^2(x,t)
\]
is a smooth solution of the~KdV equation. For this reason, the Miura map has played a fundamental role
in the study of existence and well-posedness questions for
these two equations.

The range of the Miura map may be characterized as follows. For
$q\in H_{\mathrm{loc}}^{-1}(\mathbb{R})$ real-valued and $\varphi
\in C_{0}^{\infty}(\mathbb{R})$, consider the Schr\"{o}dinger form
\begin{equation}\label{eq.h}
    \mathfrak{s}(\varphi)
        := \int\left\vert \varphi^{\prime}(x)\right\vert^{2}\rmd x
            + \langle q,\left\vert \varphi\right\vert ^{2}\rangle,
\end{equation}
where $\langle\,\cdot\,,\,\cdot\,\rangle$ is the pairing between $H_{\mathrm{loc}}^{-1}(\mathbb{R})$ and $H^1_{\mathrm{comp}}(\mathbb{R})$.
In~\cite{KPST:2005} it was shown that if $q$ is any real-valued distribution in~$H_{\mathrm{loc}}^{-1}(\mathbb{R})$ for which the Schr\"{o}dinger form~$\mathfrak{s}$ is nonnegative, then $q$ may be presented as $q=B u$ for a function $u\in L_{\mathrm{loc}}^{2}(\mathbb{R})$ that need not be unique. We will call such a potential~$q$ a \emph{Miura potential}, and we will call any function~$u$ with $q=B u$ a \emph{Riccati representative} for~$q$. It is not difficult to see
that two Riccati representatives for a given~$q$ differ by a continuous
function. Any Riccati representative~$u$ is the logarithmic derivative of a
positive distributional solution to the zero-energy Schr\"{o}dinger equation
for $q$.

In this paper, we will study inverse scattering for the subset of the Miura
potentials which admit a Riccati representative $u\in L^{1}(\mathbb{R})\cap
L^{2}(\mathbb{R})$. In this case, such a Riccati representative is unique, so we
may parameterize the potentials by their Riccati representatives.
In Paper~II of this series~\cite{HMP:2008a}, we consider Miura potentials $q$ which do not have a unique Miura representative but instead possess the following property: There exist Riccati representatives $u_+$ and $u_-$ which belong to $L^2(\mathbb{R})$ and, in addition, $u_-$ is integrable on $(-\infty,0)$, while $u_+$ is integrable on $(0,\infty)$. It will turn out that a potential~$q$ in this class is uniquely determined by the data $\left. u_- \right|_{(-\infty,0)}$, $\left.u_+\right|_{(0,\infty)}$, and the ``jump'' $(u_+-u_-)(0)$. This class includes the Faddeev--Marchenko class of potentials $q \in L^1\bigl(\mathbb{R},(1+|x|)\rmd x\bigr)$ as well as many highly oscillatory and distributional potentials. Finally, in Paper~III~\cite{HMP:2008b}, we will show how to add bound states. The class of potentials covered in the present paper is ``non-generic'' in the sense that, even for regular short-range potentials, the Riccati representative is generically non-unique; on the other hand, the ideas used here lay the groundwork for the analysis of ``generic'' singular potentials in Papers~II\ and~III.

The inverse scattering problem for Miura potentials with $u\in L^{1}(\mathbb{R})\cap
L^{2}(\mathbb{R})$ was worked out in detail from the point of view of Schr\"{o}dinger scattering in the thesis of the first author~\cite{Frayer:2008}. In this
paper, we take a somewhat different point of view and exploit a natural
complexification of the problem that leads to the ZS-AKNS\ system (see
Zakharov and Shabat~\cite{ZS:1971} and Ablowitz, Kaup, Newell, and Segur~\cite{AKNS:1974}) associated to the defocusing nonlinear Schr\"{o}dinger
equation. Elsewhere~\cite{BHP:2009}, we will apply the refined continuity
results obtained here to study solutions to the defocusing NLS\ equation with singular initial data.

Let us first describe the connection between the ZS-AKNS\ system and
scattering with Miura potentials. For a real-valued $u$ in the Schwartz class~$S(\mathbb{R})$, suppose that $\Psi$ is a matrix-valued solution of the ZS-AKNS\ system\footnote{Our equation (\ref{eq.AKNS}) is actually a special
case of the ZS-AKNS\ system since in general one takes%
\[
    \mathrm{Q}(x)
        =\left(
            \begin{array}[c]{cc}%
                0 & u_{1}(x)\\
                u_{2}(x) & 0
            \end{array}
        \right)
\]
for complex-valued $u_{1}$ and $u_{2}$.}%
\begin{equation}\label{eq.AKNS}%
    \Psi^{\/\prime} = \rmi z\sigma\Psi+\mathrm{Q}(x)\Psi,
\end{equation}
where
\begin{equation*}
    \sigma  =\frac{1}{2}\sigma_{3}
            :=\left(
                \begin{array}[c]{cc}%
                    \frac{1}{2} & 0\\
                    0 & -\frac{1}{2}%
                \end{array}\right),
\qquad
    \mathrm{Q}(x)    =\left(
                \begin{array}[c]{cc}%
                    0 & u(x)\\
                    u(x) & 0
                \end{array}\right).
\end{equation*}
An easy calculation shows that if~$\Psi$ solves~(\ref{eq.AKNS}) for
a real-valued $u\in S(\mathbb{R})$, then the row vector%
\[
    \chi:=\left(  1 \ 1\right)  \Psi
\]
solves the Schr\"{o}dinger equation%
\[
    -\chi''+q(x)\chi=\frac{z^{2}}{4}\chi,
\]
where%
\begin{equation}\label{eq.qu}%
    q(x)=u^{\prime}(x)+u^{2}(x)
\end{equation}
is the image of~$u$ under the Miura map~(\ref{eq.Miura}). Hence, in order to
study the direct and inverse scattering maps for the Schr\"{o}dinger equation
with a Miura potential, it suffices to study the analogous maps for the ZS-AKNS\ system.

To state the connection between the scattering maps more precisely, suppose
again that~$u\in S(\mathbb{R})$ is real-valued and $s \in \mathbb{R}$. As is well-known (see for example~\cite{BC:1984}, where more general problems under weaker regularity and decay assumptions are studied), there exist solutions $\Psi_{\pm}$ of the ZS-AKNS\ system with respective asymptotics%
\begin{equation}\label{eq.Psipm}%
    \lim_{x\rightarrow\pm\infty}
        | \Psi_{\pm}(x,s)-\exp(\rmi sx\sigma)| =0
\end{equation}
and a unique matrix $\mathrm{A}(s)$ of the form
\begin{equation*}
    \mathrm{A}(s) = \left(
        \begin{array}[c]{cc}%
            a(s) & \overline{b}(s)\\
            b(s) & \overline{a}(s)
        \end{array}\right)
\end{equation*}
with
\[
    \left\vert a(s)\right\vert ^{2}-\left\vert b(s)\right\vert ^{2}=1
\]
such that
\begin{equation}\label{eq.Psipm.A}%
    \Psi_{+}(x,s)=\Psi_{-}(x,s)\mathrm{A}(s).
\end{equation}
If $u\in S(\mathbb{R})$, we have $a-1\in S(\mathbb{R})$ and
$b\in S(\mathbb{R})$. The function $a(s)$ extends to a bounded
analytic function $a(z)$ on $\myIm z >0$ with $|a(z)| \to 1$ as $|z|\to\infty$.

The \emph{left reflection coefficient }\ is given by%
\begin{equation}
r_{-}(s):=b(s)/a(s),\label{eq.r-}%
\end{equation}
and $\left\vert r_{-}(s)\right\vert <1$ strictly if $u\in S(\mathbb{R})$. Similarly, the \emph{right reflection coefficient} is given by
\begin{equation}
r_{+}(s):=-\overline{b}(s)/a(s).\label{eq.r+}%
\end{equation}
These reflection coefficients are identical (up to the change $s\mapsto2s$ in
the spectral parameter) to the reflection coefficients in the Schr\"{o}dinger
problem for the Miura potential (\ref{eq.qu}). Indeed, if \
\[
    \chi_{\pm}:=(1\ 1)\Psi_{\pm}%
\]
and $\chi_{\pm}=\left(  \chi_{1}^{\pm},\chi_{2}^{\pm}\right)$, we can use~(\ref{eq.Psipm}) and~(\ref{eq.Psipm.A}) to conclude that%
\begin{equation}\label{eq.jost+}%
    \chi_{1}^{+}(x,s)\sim
        \left\{
            \begin{array}[c]{cc}%
        a(s)\rme^{\rmi sx/2}+b(s)\rme^{-\rmi sx/2}, \quad
            & x\rightarrow-\infty,\\
        \rme^{\rmi sx/2},
            & x\rightarrow+\infty,
            \end{array}
        \right.
\end{equation}
and
\begin{equation}\label{eq.jost-}%
    \chi_{2}^{-}(x,s)\sim
        \left\{
            \begin{array}[c]{cc}%
        \rme^{-\rmi sx/2},
            & x\rightarrow-\infty,\\
        a(s)\rme^{-\rmi sx/2}-\overline{b}(s)\rme^{\rmi sx/2}, \quad
            & x\rightarrow+\infty.
            \end{array}
        \right.
\end{equation}
Thus, up to the reparametrization, $r_{-}(s)$ is the usual Schr\"{o}dinger
reflection coefficient for scattering from the left, and $r_{+}(s)$ is the
usual Schr\"{o}dinger reflection coefficient for scattering from the right.
The asymptotic relations~(\ref{eq.jost+}) and (\ref{eq.jost-}) show that, up to reparametrization, $\chi_1^+(\cdot,s)$ and $\chi_2^-(\cdot,s)$ are the usual Jost solutions at $+\infty$ and $-\infty$ respectively.

These calculations motivate us to consider the inverse problem for the ZS-AKNS
system, now for $u\in L^{1}(\mathbb{R})\cap L^{2}(\mathbb{R})$. This class of~$u$ is chosen so that the corresponding Miura potentials lie in~$H^{-1}(\mathbb{R})$ and define self-adjoint Schr\"odinger operators~$S$~\cite{HM:2001,KPST:2005}; moreover, they have sufficient decay to allow a well-defined scattering theory. It will
pose no essential difficulty in considering the inverse scattering problem for
the system (\ref{eq.AKNS}) with%
\begin{equation}\label{eq.Q}
    \mathrm{Q}(x)
        = \left(\begin{array}[c]{cc}
            0 & u(x)\\ \overline{u}(x) & 0
          \end{array} \right)
\end{equation}
and $u$ complex-valued; the scattering map for the Schr\"{o}dinger problem
with Miura potential $q=u^{\prime}+u^{2}$ and $u$ real-valued will be the
restriction of the resulting scattering maps to real-valued $u$.

For this more general scattering problem, there still exist matrix-valued Jost solutions~$\Psi_\pm$ and a matrix~$\mathrm{A}$ with the above properties. In particular, the reflection coefficients~$r_\pm$ are well defined, and we introduce the direct scattering maps~$\mathcal{S}_\pm$ via
\begin{eqnarray*}
    \mathcal{S}_{+}  &  :u\rightarrow r_{+},\\ 
    \mathcal{S}_{-}  &  :u\rightarrow r_{-}. 
\end{eqnarray*}
Define the Fourier transforms~$\mathcal{F}_+$ and $\mathcal{F}_-$ via
\begin{eqnarray*}
    (\mathcal{F}_{+}f)(x)  &  :=\frac{1}{2\pi}
        \int_{-\infty}^{\infty} \rme^{\rmi sx} f(s)\,\rmd s, \\ 
    (\mathcal{F}_{-}f)(x)  &  :=\frac{1}{2\pi}
        \int_{-\infty}^{\infty} \rme^{-\rmi sx}f(s)\,\rmd s; 
\end{eqnarray*}
it will be convenient to work with
\begin{eqnarray}
    F_{+}(x)  &:=(\mathcal{F}_{+}r_+)(x) =
             \frac{1}{2\pi}\int_{-\infty}^{\infty}
                    \rme^{\rmi sx}r_{+}(s)\,\rmd s,   \label{eq.F+}\\
    F_{-}(x)  &:=(\mathcal{F}_{-}r_-)(x) =
             \frac{1}{2\pi}\int_{-\infty}^{\infty}
                    \rme^{-\rmi sx}r_{-}(s)\,\rmd s  \label{eq.F-}%
\end{eqnarray}
and to analyze the mappings $u\mapsto F_+$ and $u\mapsto F_-$  given by $\mathcal{F}_{+}\circ\mathcal{S}_{+}$ and $\mathcal{F}_{-}\circ\mathcal{S}_{-}$.

To state our results, let us define the Banach space
\begin{equation}\label{eq.X}%
    X:=L^{1}\mathbb{(R})\cap L^{2}(\mathbb{R})
\end{equation}
with the norm%
\begin{equation}\label{eq.X.norm}%
    \| F \|_{X} := \| F \|_{L^{1}} + \| F \|_{L^{2}}.
\end{equation}
Let
\begin{equation*}
    X_{1} :=\left\{  F\in X \,:\, \|\widehat{F}\|_{\infty}<1\right\}
\end{equation*}
with norm induced from~$X$, where%
\begin{equation}\label{eq.Fourier}
    \widehat{F}(s):= \int_{-\infty}^{\infty} \rme^{-\rmi sx}F(x)\,\rmd x
        = \bigl(\mathcal{F}_+^{-1} F\bigr)(s).
\end{equation}
Our first result is:

\begin{theorem}
\label{thm.1}The mappings $\mathcal{F}_{+}\circ\mathcal{S}_{+}$ and
$\mathcal{F}_{-}\circ\mathcal{S}_{-}$ are continuous bijections between $X$
and $X_{1}$. The inverse mappings take the form
\[
\left(  \mathcal{F}_{\pm}\circ\mathcal{S}_{\pm}\right)  ^{-1}=I+\mathit{\Phi}_{\pm}%
\]
where $I$ is the identity map and \
\[
\mathit{\Phi}_{\pm}:X_1\rightarrow L^{1}(\mathbb{R})\cap C(\mathbb{R})
\]
are continuous maps. Moreover, both the direct maps $\mathcal{F}_{\pm}\circ\mathcal{S}_{\pm}$ and their inverses are analytic in the sense of Definition~\ref{def.anal} below.
\end{theorem}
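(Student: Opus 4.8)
The plan is to treat the two cases symmetrically and to route everything through the modified Jost functions of the ZS-AKNS system~(\ref{eq.AKNS}) with $\mathrm{Q}$ given by~(\ref{eq.Q}). First I would establish that the direct map is well defined and analytic. Writing $m_{\pm}(x,z):=\Psi_{\pm}(x,z)\exp(-\rmi zx\sigma)$, the normalizations~(\ref{eq.Psipm}) convert~(\ref{eq.AKNS}) into Volterra integral equations for $m_{\pm}$ whose kernels depend linearly on $u$. Because the equations are of Volterra type, the Neumann series converges for every $u\in X$ and exhibits $m_{\pm}$, and hence the scattering coefficients $a-1$ and $b$ extracted from~(\ref{eq.Psipm.A}), as everywhere-convergent power series in $u$; this simultaneously yields continuity and analyticity in the sense of Definition~\ref{def.anal}. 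The relation $|a|^{2}-|b|^{2}=1$ together with the absence of zeros of $a$ on $\{\myIm z\ge 0\}$ (the defocusing, no-bound-state feature of~(\ref{eq.Q})) forces $\|r_{\pm}\|_{\infty}<1$; since, up to the reflection $s\mapsto-s$, one has $\widehat{F_{\pm}}=r_{\pm}$, the sup-norm constraint defining $X_{1}$ holds. The remaining point is that $F_{\pm}\in L^{1}\cap L^{2}$, which I would obtain from the decay and smoothness of $a$ and $b$ inherited from $u\in L^{1}\cap L^{2}$.

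For injectivity and surjectivity I would use the Marchenko (Gelfand--Levitan) integral equation. Given $F\in X_{1}$, the equation for the transformation kernel $K$ has the schematic form $(I+T_{F})K=-F$, where $T_{F}$ is a Hankel-type operator whose kernel is a translate of $F$. The crucial observation is that, by Plancherel, the operator norm of $T_{F}$ on the relevant $L^{2}$ half-line space is controlled by $\|\widehat{F}\|_{\infty}$; hence the hypothesis $\|\widehat{F}\|_{\infty}<1$ makes $I+T_{F}$ boundedly invertible, the inverse being given by a Neumann series in $F$. Solving for $K$ and recovering the potential through the diagonal value $u(x)\sim K(x,x)$ reconstructs a unique $u$, giving both injectivity (uniqueness for the Marchenko equation) and surjectivity (solvability for every $F\in X_{1}$). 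The leading term of the Neumann series reproduces $F$ itself, which is precisely the assertion that the inverse has the form $I+\mathit{\Phi}_{\pm}$; the remainder $\mathit{\Phi}_{\pm}(F)$ collects the quadratic-and-higher terms, and the extra integration present in those terms is what buys the improved conclusion $\mathit{\Phi}_{\pm}(F)\in L^{1}(\mathbb{R})\cap C(\mathbb{R})$. Analyticity of the inverse follows from the analytic dependence of the resolvent $(I+T_{F})^{-1}$ on $F$: on the open set $X_{1}$ the Neumann series converges locally uniformly, so each coefficient is a bounded multilinear form in $F$ and the resulting map is analytic in the sense of Definition~\ref{def.anal}.

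I expect the main obstacle to be the careful bookkeeping of the $L^{1}$ component of the $X$-norm, since the Fourier transform does not map $L^{1}$ to $L^{1}$ and the Marchenko kernel estimates are naturally $L^{2}$-based. The delicate points are, first, proving $F_{\pm}\in L^{1}$ on the direct side and the matching $\mathit{\Phi}_{\pm}(F)\in L^{1}$ on the inverse side, and second, showing that the strict inequality $\|\widehat{F}\|_{\infty}<1$ is both produced by and sufficient for the correspondence, so that the two nonlinear maps are genuine bijections onto $X_{1}$ rather than merely into a larger set. Establishing these $L^{1}$ estimates uniformly, and with the analytic (rather than merely Lipschitz) dependence demanded by Definition~\ref{def.anal}, is the technical heart of the argument.
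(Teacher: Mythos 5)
Your treatment of the direct maps is essentially the paper's: Volterra/Neumann series for the Jost functions exhibit $a-1$ and $b$ as convergent sums of multilinear terms in $u$ and $\overline u$, whence continuity and analyticity, and $|a|^{2}-|b|^{2}=1$ together with boundedness of $a$ gives $\|r_{\pm}\|_{\infty}<1$. One caveat: membership of $r_{\pm}=b/a$ in the Fourier image of $L^{1}\cap L^{2}$ is not a matter of ``decay and smoothness'' of $a$ and $b$; it requires inverting $a$ in the unital Banach algebra $1\dotplus\widehat{X}$ (a Wiener--Levy type argument, Lemma~\ref{lem:WL} and Remark~\ref{rem.a}), so that $1/a$ is itself $1$ plus an element of $\widehat{X}$ before the product $b\cdot(1/a)$ lands in $\widehat{X}$.

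The inverse side has two genuine gaps. First, a single Marchenko equation cannot deliver the global $L^{1}(\mathbb{R})$ control that the theorem asserts. The Hankel bound you invoke (Plancherel, $\|T_{F}\|_{L^{2}\to L^{2}}\le\|\widehat F\|_{\infty}$) is exactly (\ref{eq.TF.L2}), but the estimates it yields for the reconstruction from the right-hand data control $u_{+}$ only in $X_{c}^{+}=L^{1}(c,\infty)\cap L^{2}(c,\infty)$, with a constant that grows linearly in $|c|$ as $c\to-\infty$: see (\ref{eq.G2n+1.L1a}), where the $L^{1}(c,\infty)$ bound carries the factor $\max\{x_{0}-c,0\}\,\|F\|_{2}^{2}$. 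The ``extra integration'' in the higher Neumann terms does not repair this. The paper's essential device, for which you propose no substitute, is to reconstruct $u$ on right half-lines from $F_{+}$ and on left half-lines from $F_{-}=\mathcal{F}_{-}(\mathcal{I}\widehat F_{+})$, using the involution $\mathcal{I}$ that intertwines $\mathcal{S}_{+}$ and $\mathcal{S}_{-}$, and then patch the two reconstructions (Lemmas \ref{lemma.GLM+} and \ref{lemma.GLM-}). Second, surjectivity does not follow from solvability of the Marchenko equation: given $F\in X_{1}$ you produce a candidate $u$, but you never show that the direct map applied to this $u$ returns $\widehat F$ as its reflection coefficient, nor that the left and right reconstructions agree. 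This consistency is precisely facts (1)--(2) in the paper's proof of Theorem~\ref{thm.1}, and it is obtained there by approximating $F$ in $X$ by Schwartz-class data, invoking the classical theory for $S(\mathbb{R})$, and passing to the limit using continuity of both the direct maps (Lemma~\ref{lemma.direct}) and the maps $\mathcal{G}_{\pm}$; without some such argument bijectivity onto $X_{1}$ is not established. A smaller but real gap: invertibility of $I-T_{F}(x)\circ T_{\overline F}(x)$ on $L^{1}(\mathbb{R}^{+})$ is not given by a Neumann series, since $\|T_{F}\|_{L^{1}\to L^{1}}$ is only bounded by $\|F\|_{1}$, which may exceed $1$; the paper obtains it from compactness of $T_{F}$, the Fredholm alternative, and the fact that $T_{F}\circ T_{\overline F}$ maps $L^{1}$ into $L^{2}$, where the kernel is already known to be trivial.
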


We note that the mappings of the above theorem are not analytic in the usual sense as, for example, the direct maps depend on $u$ and $\overline{u}$ and the inverse maps depend on $F$ and $\overline{F}$.

Now let $X_{\mathbb{R}}$ be the restriction of $X$ to real-valued functions,
and let $\left(  X_{\mathbb{R}}\right)  _{1}$ be the analogous restriction of
$X_{1}$. If $u$ is real-valued, the reflection coefficients obey the
additional constraint%
\begin{equation}\label{eq.r.real}%
    \overline{r_{\pm}(s)}=r_{\pm}(-s),
\end{equation}
which implies that $F_{+}$ and $F_{-}$ are real-valued. Thus, the maps
$\mathcal{F}_{\pm}\circ\mathcal{S}_{\pm}\,\ $\ restrict to maps from $X_{\mathbb{R}%
}$ to $\left(  X_{\mathbb{R}}\right)  _{1}$. From Theorem \ref{thm.1} and the
analyticity properties of the direct and inverse maps, we obtain our main result on inverse scattering for Schr\"odinger operators with Miura potentials:

\begin{corollary}\label{cor.Schroed}
The maps $\mathcal{F}_{+}\circ\mathcal{S}_{+}$ and $\mathcal{F}_{-}%
\circ\mathcal{S}_{-}$ restrict to real-analytic bijections between
$X_{\mathbb{R}}$ and~$\left(  X_{\mathbb{R}}\right)_{1}$. Moreover,
\[
    \left( \mathcal{F}_{\pm}\circ\mathcal{S}_{\pm}\right)^{-1}=I+\mathit{\Phi}_{\pm}%
\]
where $I$ is the identity map and
\[
    \mathit{\Phi}_{\pm}\,:\, \left(  X_{\mathbb{R}}\right)_{1} \rightarrow
                    L^{1}(\mathbb{R})\cap C(\mathbb{R}).
\]
\end{corollary}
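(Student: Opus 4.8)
The plan is to derive the corollary from Theorem~\ref{thm.1} by exploiting the equivariance of the whole scattering construction under complex conjugation. The first step I would carry out is to establish the identity
\[
  (\mathcal{F}_{\pm}\circ\mathcal{S}_{\pm})(\overline{u})
     = \overline{(\mathcal{F}_{\pm}\circ\mathcal{S}_{\pm})(u)},
  \qquad u\in X,
\]
directly from the symmetry of the ZS-AKNS\ system~(\ref{eq.AKNS}) under $u\mapsto\overline{u}$. If $\Psi(x,s)$ solves~(\ref{eq.AKNS}) with the potential~(\ref{eq.Q}) built from~$u$, then, $\sigma$ being real, $\overline{\Psi(x,-s)}$ solves the same system with~$\mathrm{Q}$ replaced by its entrywise conjugate (the potential built from~$\overline{u}$) and with the normalizations~(\ref{eq.Psipm}) preserved. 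Tracking this through~(\ref{eq.Psipm.A}) shows that the transition matrix for~$\overline{u}$ is $\overline{\mathrm{A}(-s)}$, whence $a(s)\mapsto\overline{a(-s)}$ and $b(s)\mapsto\overline{b(-s)}$; by~(\ref{eq.r-}) and~(\ref{eq.r+}) the reflection coefficients transform as $r_{\pm}(s)\mapsto\overline{r_{\pm}(-s)}$, and the substitution $s\mapsto-s$ in~(\ref{eq.F+}) and~(\ref{eq.F-}) converts this into $F_{\pm}\mapsto\overline{F_{\pm}}$. The symmetry~(\ref{eq.r.real}) and the reality of~$F_{\pm}$ for real~$u$ are the special case $\overline{u}=u$.

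With the conjugation identity available, the restriction of the maps is immediate. For real-valued~$u$ it gives $\overline{F_{\pm}}=F_{\pm}$, so each $\mathcal{F}_{\pm}\circ\mathcal{S}_{\pm}$ sends $X_{\mathbb{R}}$ into $(X_{\mathbb{R}})_{1}$. Conversely, given $F\in(X_{\mathbb{R}})_{1}$, let $u=(I+\mathit{\Phi}_{\pm})(F)\in X$ be the unique preimage supplied by Theorem~\ref{thm.1}. Applying the identity to~$\overline{u}$ and using $\overline{F}=F$ yields $(\mathcal{F}_{\pm}\circ\mathcal{S}_{\pm})(\overline{u})=F=(\mathcal{F}_{\pm}\circ\mathcal{S}_{\pm})(u)$, and the injectivity from Theorem~\ref{thm.1} forces $\overline{u}=u$, i.e.\ $u\in X_{\mathbb{R}}$. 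Hence each $\mathcal{F}_{\pm}\circ\mathcal{S}_{\pm}$ is a bijection from $X_{\mathbb{R}}$ onto $(X_{\mathbb{R}})_{1}$ whose inverse is the restriction of $I+\mathit{\Phi}_{\pm}$; since $\mathit{\Phi}_{\pm}(F)=u-F$ is then real-valued, $\mathit{\Phi}_{\pm}$ maps $(X_{\mathbb{R}})_{1}$ into the real-valued elements of $L^{1}(\mathbb{R})\cap C(\mathbb{R})$.

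It remains to upgrade continuity to real-analyticity, and here I would identify $X$ with the complexification $(X_{\mathbb{R}})_{\mathbb{C}}$ of~$X_{\mathbb{R}}$. By Definition~\ref{def.anal}, the map $f=\mathcal{F}_{\pm}\circ\mathcal{S}_{\pm}$ is given locally by convergent series of bounded terms that are homogeneous in~$u$ and~$\overline{u}$; equivalently, there is a map $\widehat{f}$ jointly holomorphic in two variables with $f(u)=\widehat{f}(u,\overline{u})$. Then $G(u):=\widehat{f}(u,u)$ is holomorphic on~$X$ and agrees with~$f$ on $X_{\mathbb{R}}$, so $G$ furnishes a holomorphic extension of $f|_{X_{\mathbb{R}}}$ to the complexification of~$X_{\mathbb{R}}$; this is precisely what makes $f|_{X_{\mathbb{R}}}$ real-analytic, and the identical argument applied to $I+\mathit{\Phi}_{\pm}$ gives real-analyticity of the inverse. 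I expect the one point needing care to be this final promotion: one must check that Definition~\ref{def.anal} really produces the joint-holomorphic $\widehat{f}$, so that the diagonal extension~$G$ exists and is holomorphic as a map into~$X$. Once that is secured the restriction-of-an-analytic-map statement is formal, and the whole corollary reduces to the conjugation symmetry together with the injectivity already established in Theorem~\ref{thm.1}.
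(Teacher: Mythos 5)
Your argument is correct, and at the top level it is the paper's own strategy: restrict the complex-analytic bijections of Theorem~\ref{thm.1} to the real subspaces, use the conjugation symmetry to see that the restriction lands in $(X_{\mathbb{R}})_1$, and obtain real-analyticity by the diagonal-complexification argument, which is exactly the content of Remark~\ref{rem.real} (the paper's proof of the corollary is two sentences citing Lemma~\ref{lemma.direct} and Remark~\ref{rem.real}, the symmetry~(\ref{eq.r.real}) having been recorded just before the corollary's statement). The one place you genuinely diverge is surjectivity onto $(X_{\mathbb{R}})_1$: the paper leaves implicit the fact that a real-valued $F$ has a real-valued preimage --- in the paper this can be read off from the explicit multilinear series~(\ref{eq.G+}) and~(\ref{eq.G-}) for the inverse maps, whose terms are manifestly real when $F$ is real --- whereas you deduce it abstractly from the equivariance $(\mathcal{F}_{\pm}\circ\mathcal{S}_{\pm})(\overline{u})=\overline{(\mathcal{F}_{\pm}\circ\mathcal{S}_{\pm})(u)}$ (correctly derived from the ZS-AKNS symmetry $u\mapsto\overline{u}$, $s\mapsto -s$, $\Psi\mapsto\overline{\Psi}$) combined with injectivity of the complex map from Theorem~\ref{thm.1}. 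Your route has the merit of being independent of the particular form of the inverse and of making the reality of $\mathit{\Phi}_{\pm}(F)$ explicit; the paper's route is shorter given that the series are already on the page. Finally, the ``point needing care'' you flag at the end is not actually an open issue: Definition~\ref{def.anal} supplies the jointly analytic $\mathit{\Psi}_2$ with $\mathit{\Psi}(f)=\mathit{\Psi}_2(f,\overline{f})$ by fiat, and Theorem~\ref{thm.1} asserts analyticity in precisely this sense, so the diagonal map $G(u)=\mathit{\Psi}_2(u,u)$ is holomorphic with nothing further to verify --- this observation is exactly Remark~\ref{rem.real}.
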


\begin{remark}
The class of Miura potentials with a Riccati representative $u\in X_{\mathbb R}$ is ``non-generic'' among  potentials generating non-negative Schr\"{o}dinger operators: even for regular potentials, the generic case is that $r(0)=-1$ (see, e.g., \cite{DT:1979}, Theorem~1, p.~146 and Remark~9, pp.~152--153, and Example~\ref{ex.delta} below). In the generic case, the scattering problem can be parametrized by left and right Riccati representatives in $L^{2}(\mathbb R)$ having additional
integrability on left and right half-lines~\cite{HMP:2008a}.
\end{remark}

In the proof of Theorem \ref{thm.1}, we reconstruct $u$ on a half-line
$(c,\infty)$ using $r_{+}$ and a ``right''
Gelfand--Levitan--Marchenko equation, and we reconstruct $u$ on a half-line
$(-\infty,c)$ using a ``left''
Gelfand--Levitan--Marchenko equation, and show that these reconstructions give
continuous maps from $X_{1}$ into the respective spaces
 $X_{c}^{+}:=L^{1}(c,\infty) \cap L^{2}(c,\infty) $ and
 $X_{c}^{-}:=L^{1}(-\infty,c)\cap L^{2}(-\infty,c)$.
We then exploit the existence of an involution~$\mathcal{I}$ that intertwines $\mathcal{S}_{+}$ and $\mathcal{S}_{-}$ to obtain continuity of the inverse maps into the full space~$X$.

Inverse scattering for the AKNS\ system has been extensively studied: see for
example the original papers~\cite{ZS:1971} and~\cite{AKNS:1974}, Shabat's
solution of the inverse problem for potentials in~$L^{1}(\mathbb R)$~\cite{Shabat:1975,Shabat:1979}, the works of
Beals and Coifman~\cite{BC:1984,BC:1985,BC:1987}, Cohen and Kappeler~\cite{CK:1992}, Wadati~\cite{Wadati:1973}, and the monographs of
Faddeev and Takhtajan~\cite{FT:2007}, and Beals, Deift, and Tomei~\cite{BDT:1988}.
Aktosun, Klaus, and van der Mee~\cite{AKM:2000} investigated direct and inverse scattering for AKNS systems on the line with integrable matrix-valued potentials and, in particular, derived partial characterization of the scattering data. Earlier in~\cite{AKM:1993}, they extended the classical scattering theory for Schr\"odinger operators by treating a class of singular potentials constructed via the Darboux transformations that lead to ambiguities in the inverse scattering; see also the related paper by Degasperis and Sabatier~\cite{DS:1987}.
Recently, Demontis and van der Mee~\cite{Dvdm:2008} studied scattering theory
and wave operators for AKNS-type equations including those considered here and
completely characterized scattering data for potentials $\mathrm{Q}\in L^{1}(\mathbb{R})$. Novikov \cite{Novikov:1997} studied inverse scattering for
$u$ in $L^{1}$-based Sobolev spaces $W^{n,1}(\mathbb R)$ with $n\geq1$, and showed that the leading singularities of $u$ (modulo $W^{n-1,1}(\mathbb R)$) can be recovered from the scattering coefficients via the Fourier transform. In a similar way, Theorem~\ref{thm.1} can be interpreted as saying that the singularities of $u$
are recovered modulo $C(\mathbb{R})$ from the Fourier transform of the
reflection coefficient. Xin Zhou \cite{Zhou:1998} used Riemann--Hilbert
techniques to show that the direct scattering maps $\mathcal{S}_\pm:u\mapsto r_\pm$ are
bijections from $H^{j,k}(\mathbb{R)}$ to $H_{1}^{k,j}(\mathbb{R)}$ where
$H^{j,k}(\mathbb{R)}$ is the $L^{2}$ weighted Sobolev space of functions with
$j$ derivatives belonging
    to~$L^{2}\bigl(\mathbb{R},\bigl(1+|x|^{2}\bigr)^{k}\rmd x\bigr)$, and
\[
    H_{1}^{j,k}(\mathbb{R})=\left\{  r\in H^{j,k}(\mathbb{R})\,:\,
        \|r\|_{\infty}<1\right\}  .
\]
The $H^{j,k}$ classes are very natural in that certain such classes are
preserved under the completely integrable flows for the NLS equation, the mKdV
equation, etc. On the other hand, it is of interest to see how far these can
be relaxed and still obtain a bijective map. While our class is not preserved
under KdV or mKdV, the mappings~$\mathit{\Phi}_\pm$ extend to the space%
\[
    \left(  L^{2}(\mathbb{R})\right)_{1}
        =\left\{  F\in L^{2}(\mathbb{R}) \,:\, \widehat{F}\in L^{\infty}(\mathbb{R}),\ \| \widehat{F}\|_{\infty}<1\right\}.
\]
This space is preserved under maps of the form $\widehat{F}\mapsto \rme^{\rmi s^{p}t}\widehat{F}$. \ In a separate paper \cite{BHP:2009}, we will use this
extension to construct Strichartz solutions to the defocusing cubic NLS equation and
study their asymptotic behavior.

We close this introduction with several examples of Miura potentials.

\begin{example}
Let $u$ be an even function that for $x>0$ equals $x^{-\alpha}\sin x^{\beta}$.
Assume that $\alpha>1$ and $\beta>\alpha+1$. Then $u$ belongs to~$X$ and the corresponding Miura potential $q=u^{\prime}+u^{2}$ is
of the form
\[
    q(x)=   \beta\,\mathrm{sign}\,(x) |x|^{\beta-\alpha-1}\cos|x|^{\beta}%
            +\tilde{q}(x)
\]
for some bounded function $\tilde{q}$. Thus $q$ is unbounded and oscillatory;
nevertheless, the corresponding Schr\"{o}dinger operator possesses only
absolutely continuous spectrum filling out the positive semi-axis and the
scattering and inverse scattering on such potential is well-defined.
\end{example}

\begin{example}\label{eq.coulomb}
Assume that $\phi\in C_0^\infty(\mathbb{R})$ is such that $\phi\equiv1$ on $(-1,1)$. Take a function $u(x) = \alpha\,\phi(x)\log|x|$ with $\alpha>0$. Then~$u\in X$; moreover, since the distributional derivative of $\log|x|$ is the distribution~${\mathrm{P.v.}}\,1/x$,   the corresponding Miura potential~$q$ is smooth outside the origin and has there a Coulomb-type singularity. See e.g.~\cite{BDL:2000,Kurasov:1996,FLM:1995} and the references therein for discussion and rigorous treatment of Schr\"odinger operators with Coulomb potentials.
\end{example}

\begin{example}(Frayer~\cite{Frayer:2008})
The Riccati representative $u=\alpha\chi_{[-1,1]}$, with $\alpha$ a nonzero real constant and $\chi_{\Delta}$ the indicator function of a set~$\Delta$, corresponds to the Miura potential
\[
    q=\alpha\delta(\cdot+1)-\alpha\delta(\cdot-1)+\alpha^{2}\chi_{\lbrack-1,1]},
\]
$\delta$ being the Dirac delta-function centered at the origin.
One can explicitly calculate the Jost solutions and hence the reflection
coefficients by matching solutions in the regions $x<-1$, $-1<x<1$, and $x>1$
at $x=-1$ and $x=+1$; however, the formula is rather involved. We omit details.
\end{example}

\begin{example}\label{ex.delta}
For the potential~$q(x)=\alpha\delta(x)$, with~$\alpha$ a nonzero real constant, the extremal solutions $\varphi_{\pm}$ of the zero-energy Schr\"{o}dinger equation (see Section~\ref{sec.miura}) are different and equal
\[
    \varphi_{+}(x)=%
        \cases{ 1 & for  $x>0$,\\
                1-\alpha x & for $x<0$,\\ }
\]
and
\[
    \varphi_{-}(x)=
        \cases{1+\alpha x & for  $x>0$,\\
               1 & for $x<0$,\\ }
\]
\it
respectively (see also~\cite[Appendix~A]{KPST:2005} and~\cite{Levitan:1979}). Therefore such~$q$ does not belong to the class of singular potentials treated in this paper.

For this potential, the reflection coefficient satisfies $r(0)=-1$. This is in
fact a generic situation even for potentials $q$ in the Faddeev--Marchenko
class $L^{1}\bigl(\mathbb{R},(1+|x|)\,dx\bigr)$, for which the standard scattering theory is
well understood. Therefore a generic Faddeev--Marchenko potential is not a
Miura potential in the sense of this paper.

As mentioned above, we shall extend the scattering theory developed here to a
wider class of potentials including delta functions, Faddeev--Marchenko
potentials and many other in the papers \cite{HMP:2008a} and \cite{HMP:2008b}.
\end{example}

This paper is organized as follows. In \S \ref{sec.pre} we introduce necessary notation and function spaces and recall some basic results about the Miura map. We review the direct scattering problem in \S \ref{sec.direct} and prove Theorem \ref{thm.1} in
\S \ref{sec.inverse}. In an appendix, we review the Wiener and
associated algebras used to analyze the direct and inverse scattering maps.


\section{Preliminaries}\label{sec.pre}

\subsection{Notation}\label{ssec.notat}

In what follows, $\mathrm{M}_{2}(\mathbb{C})$ will stand for the $2\times2$ complex matrices
with the Euclidean operator norm $|\,\cdot\,|$ and
\[
    \sigma_{3}=\left(\begin{array}[c]{cc}%
                        1 & 0\\
                        0 & -1
                    \end{array}\right)
\]
is the usual Pauli matrix. Upright capital Roman or Greek letters (e.g., $\mathrm{A}$, $\mathrm{I}$, $\mathrm{\Psi}$ etc) will usually denote elements of~$\mathrm{M}_{2}(\mathbb{C})$; in particular, $\mathrm{I}$ is the $2\times 2$ identity matrix.

Next, $L^1(\mathbb{R})$ and $L^2(\mathbb{R})$ are the standard Lebesgue function spaces on the real line~$\mathbb{R}$, and $\| \,\cdot\,\|_{1}$ and~$\|\,\cdot\,\|_{2}$ will stand for the respective $L^{1}$ and $L^{2}$-norms of scalar, vector, or matrix-valued functions on~$\mathbb{R}$. Recalling the Banach space $X$ (see (\ref{eq.X}) and (\ref{eq.X.norm})), we
define, for any $c\in\mathbb{R}$,
\[
    X_{c}^{+}=L^{1}(c,\infty)\cap L^{2}(c,\infty)
\]
and%
\[
    X_{c}^{-}=L^{1}(-\infty,c)\cap L^{2}(-\infty,c).%
\]

As usual, $C_0^\infty(\mathbb{R})$ denotes the linear space of all functions of compact support that are infinitely often differentiable. The Schwartz class~$S(\mathbb{R})$ consists of all infinitely often differentiable functions $f$ such that $x^kf^{(l)}$ are bounded on~$\mathbb{R}$ for every natural $k$ and~$l$. $H^1(\mathbb{R})$ is the space of functions in~$L^2(\mathbb{R})$ whose distributional derivative also belong to~$L^2(\mathbb{R})$, the norm being
\[
  \|f\|_{H^1} = \bigl(\|f\|^2_2+\|f'\|^2_2\bigr)^{1/2}.
\]
The space $H^{-1}(\mathbb{R})$ is a dual space to~$H^1(\mathbb{R})$, i.e., the space of all distributions that are bounded functionals on~$H^1(\mathbb{R})$. If $Y$ is any of the above spaces, then $Y_{\mathrm{loc}}$ is defined as the space of all functions (or distributions) $f$ such that $f\phi\in Y$ for every $\phi\in C_0^\infty(\mathbb{R})$.

Finally, we denote by $H_{\pm}^{2}(\mathbb{R})$ the Hardy spaces of functions on the real line having respective representations of the form%
\begin{eqnarray*}
    f_{+}(s) &  =\int_{0}^{\infty} \rme^{\rmi s\zeta}g_{+}(\zeta)\,\rmd\zeta,\\
    f_{-}(s) &  =\int_{-\infty}^{0}\rme^{\rmi s\zeta}g_{-}(\zeta)\,\rmd\zeta
\end{eqnarray*}
for $g_{+}\in L^{2}(0,\infty)$ and $g_{-}\in L^{2}(-\infty,0)$. Clearly,
$L^{2}(\mathbb{R})=H_{-}^{2}(\mathbb{R})\oplus H_{+}^{2}(\mathbb{R})$. For
$f\in L^{2}(\mathbb{R})$, we denote by ${\mathcal C}$ the Cauchy integral operator%
\[
    \left({\mathcal C}f\right)  (z)
        =\frac{1}{2\pi i}\int_{\mathbb{R}}\frac{1}{s-z}f(s)\,\rmd s,
            \qquad z \in {\mathbb C} \setminus {\mathbb R},
\]
and by ${\mathcal C}_{\pm}$ the operators%
\begin{equation}\label{eq.Cauchy}%
    \left({\mathcal C}_{\pm}f\right)  (s)=\lim_{\varepsilon\downarrow0}
            \left(\mathcal{C}f\right)(s\pm i\varepsilon),
    \qquad s \in {\mathbb R},
\end{equation}
where the limit is taken in~$L^{2}(\mathbb{R})$. The operators~${\mathcal C}_{+}$ and~$-{\mathcal C}_{-}$ are orthogonal projections onto~$H_{+}^{2}(\mathbb{R})$ and~$H_{-}^{2}(\mathbb{R})$ respectively, and ${\mathcal C}_{+}-{\mathcal C}_{-}=I$ as operators on~$L^{2}(\mathbb{R})$. We have the formulas%
\begin{equation}\label{eq.C+-}%
    \left({\mathcal C}_{\pm}f\right)(s)
        = \pm \mathcal{F}_+^{-1}\chi_{\pm}\mathcal{F}_+f,
\end{equation}
where $\chi_{+}$ (resp. $\chi_{-}$) is the indicator function of~$(0,\infty)$
(resp.\ of $(-\infty,0)$).

Both the direct and inverse scattering maps are obtained as solutions of integral
equations that naturally lead to series representations. To capture their
analyticity properties, we make the following rather nonstandard definition (see \cite{Dineen:1981,Nachbin:1969} and\cite[Appendix~A]{PT:1987} for discussion of analytic mappings between Banach spaces).

\begin{definition}\label{def.anal}
Let~$X$ be a Banach space of complex-valued functions, let~$U$ be an open subset of~$X$ containing along with every~$f$ its complex conjugate $\overline{f}$, and let~$\mathit{\Psi}$ be a nonlinear mapping from~$U$ into a Banach space~$Y$. We say that~$\mathit{\Psi}$ is analytic in variables~$f$ and~$\overline f$ if  $\mathit{\Psi}(f) =\mathit{\Psi}_2(f,\overline{f})$, where $\mathit{\Psi}_2: U \times U \rightarrow Y$ is analytic.
\end{definition}

\begin{remark}
\label{rem.real}If~$\mathit{\Psi}\,:\,X \to Y$ is analytic in $f$ and $\overline{f}$, then the
restriction of~$\mathit{\Psi}$ to the subspace~$X_{\mathbb{R}}$ of real-valued
functions is a real-analytic map from $X_{\mathbb{R}}$ to $Y$.
\end{remark}

We also note that if~$\mathit{\Psi}$ is analytic in~$f$ and $\overline{f}$ and $\phi$ is an analytic function of a complex variable, then the point-wise composition~$\phi \circ \mathit{\Psi}$ often turns out to be analytic in~$f$ and $\overline{f}$, see Lemma~\ref{lem.anal}.

\subsection{The Miura map and Riccati representatives}

\label{sec.miura}

In this subsection we briefly recall some results of \cite{KPST:2005} relating
the Miura map and the set of positive solutions of the Schr\"{o}dinger equation
at zero energy. We begin by considering distributional solutions of the
Schr\"{o}dinger equation with a real-valued potential~$q\in H_{\mathrm{loc}%
}^{-1}(\mathbb{R})$. For such $q$, we define the quadratic form~$\mathfrak{s}$
via~(\ref{eq.h}) and let%
\[
    \lambda_{0}(q)
        :=\inf\left\{  \mathfrak{s}(\varphi)\,:\,
        \varphi\in C_{0}^{\infty}(\mathbb{R}),\ \| \varphi\| _{2}=1\right\}.
\]
The quadratic form $\mathfrak{s}$ is nonnegative if $\lambda_{0}(q)\geq0$. A
function $y\in H_{\mathrm{loc}}^{1}(\mathbb{R})$ is a (\emph{distributional}\/) solution of $-y^{\prime\prime}+qy=0$ if for every $\varphi\in C_{0}^{\infty}(\mathbb{R})$, $\mathfrak{s}(y,\varphi)=0$, with $\mathfrak{s}(y,\varphi)$ denoting the value of the associated sesquilinear form. It is not difficult to see that any solution $y$ may only have isolated zeros where the solution $y$ changes sign;
in particular, if $y$ is nonnegative, it is either identically zero or
strictly positive. If $\lambda_{0}(q)\geq0$ and $y$ is a solution with either
$y\in L^{2}({\mathbb{R}}^{+})$ or $y\in L^{2}({\mathbb{R}}^{-})$, then $y$ has
a single sign or is identically zero: if $y$ has a zero, then there is a
half-line Dirichlet problem operator $-\rmd^{2}/\rmd x^{2}+q$ with a negative
eigenvalue, contradicting the assumption that $\lambda_{0}(q)\geq0$. Finally,
$\lambda_{0}(q)\geq0$ if and only if the set
\[ 
    \Pos(q) = \left\{  \psi\in H_{\mathrm{loc}}^{1}(\mathbb{R})
            \,:\, \psi>0,\ \psi(0)=1, \
            \forall\varphi\in C_{0}^{\infty}(\mathbb{R}) \quad \mathfrak{s}(\varphi,\psi)=0 \right\}
\]
of positive distributional solutions to the equation $-\psi^{\prime\prime}+q\psi=0$ is nonempty. If $\psi\in\Pos(q)$, then $q=u^{\prime}+u^{2}$
for $u=\psi^{\prime}/\psi$.

We initially consider the Miura map $B:\, u \mapsto u^{\prime}+u^{2}$ acting
from $L_{\mathrm{loc}}^{2}(\mathbb{R})$ into~$H_{\mathrm{loc}}^{-1}%
(\mathbb{R})$, by interpreting $u^{\prime}$ as the distribution derivative of~$u$. If~$q$ lies in the range~$\Ran B$ of $B$, then any function $u\in B^{-1}(q)$ is called a \emph{Riccati representative} of~$q$. The range of the Miura map consists of those $q\in H_{\mathrm{loc}}^{-1}(\mathbb{R})$, for which $\lambda_{0}(q)\geq0$, or, equivalently, for which the set
$\Pos(q)$ is nonempty. The set $\Ran B$ is nowhere dense in~$H_{\mathrm{loc}}^{-1}(\mathbb{R})$.

It is not difficult to see that the mappings
\[
    u\mapsto\exp\left(\int_{0}^{x}u(s)~\rmd s\right)
            \qquad\mathrm{and}\qquad
    \varphi\mapsto\frac{\varphi^{\prime}}{\varphi}%
\]
take $B^{-1}(q)$ into $\Pos(q)$ and vice versa, so that these
two sets are homeomorphic respectively as subsets of $L_{\mathrm{loc}}^{2}(\mathbb{R})$ and $H_{\mathrm{loc}}^{1}(\mathbb{R})$. Given any $y_{1}\in\Pos(q)$, any solution of~$-y^{\prime\prime}+qy=0$ with
$y(0)=1$ (whether positive or not) takes the form
\[
    y(x)=y_{1}(x)\left(  1+c\int_{0}^{x}y_{1}(s)^{-2}~\rmd s\right)
\]
for some $c$ (see \cite[Ch.~IX.2\,(ix)]{Hartman:1964}). Using this fact, it is not
difficult to see that the set $\Pos(q)$ is the set of convex
combinations of extremal solutions $\varphi_{+}$ and $\varphi_{-}$
characterized by the respective divergence of
 $\int_{0}^{\infty}\varphi_{+}^{-2}(s)\,\rmd s$ and $\int_{-\infty}^{0}\varphi_{-}^{-2}(s)\,\rmd s$.
As we will see, the extremal solutions~$\varphi_{+}$ and~$\varphi_{-}$ coincide up to constant factors with the usual right and left Jost solutions at zero energy
whenever the latter are defined. The extremal solutions $\varphi_{+}$ and
$\varphi_{-}$ either coincide or are linearly independent, so that $B^{-1}(q)$
(with the topology induced from~$L_{\mathrm{loc}}^{2}(\mathbb{R})$) is
homeomorphic either to a point or to a line segment.

Here we are concerned with the restriction of $B$ to $L^{2}(\mathbb{R})$ so
that the range of $B$ is contained in $H^{-1}(\mathbb{R})$. A distribution
$q\in H^{-1}(\mathbb{R})$ lies in the range of $B$ if and only if the
positivity condition $\lambda_{0}(q)\geq0$ holds and $q$ can be presented as
$f^{\prime}+g$ for $f\in L^{2}(\mathbb{R})$ and $g\in L^{1}(\mathbb{R})$. The
following result is a consequence of the proof of Lemma~4.1 in~
\cite{KPST:2005}.

\begin{lemma}\label{lem:uL2}
Suppose that $q\in H^{-1}(\mathbb{R})$ and that there exists a
Riccati representative~$u$ of~$q$ with~$u\in L^{2}({\mathbb{R}}^{+})$. Then
the same is true for every Riccati representative of $q$. A similar statement
holds with $L^{2}({\mathbb{R}}^{+})$ replaced by $L^{2}({\mathbb{R}}^{-})$.
\end{lemma}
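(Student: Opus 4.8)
The plan is to parametrise the Riccati representatives of $q$ by the positive solutions in $\Pos(q)$ and to reduce the claim to a single first-order (Riccati) relation for their difference. Writing $u=\psi'/\psi$ and $v=\phi'/\phi$ with $\psi,\phi\in\Pos(q)$, put $w:=v-u=\bigl(\log(\phi/\psi)\bigr)'$. Subtracting the two Miura relations $u'+u^2=q=v'+v^2$ gives, as an identity in $L^1_{\mathrm{loc}}(\mathbb{R})$,
\[
    w'=-(v^2-u^2)=-w\,(2u+w),
\]
so that $w$ is locally absolutely continuous and solves $w'=-2uw-w^2$; moreover $w=W/(\psi\phi)$ with $W=\psi\phi'-\psi'\phi$ the (constant) Wronskian, so $w$ has a fixed sign. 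Integrating $w'+w^2=-2uw$ over $(0,X)$ then yields the basic identity
\[
    \int_0^X w^2\,\rmd x = w(0)-w(X)-2\int_0^X u\,w\,\rmd x.
\]

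I would split the argument according to the sign of $w$. If $w\ge0$, the boundary term satisfies $-w(X)\le0$, and Cauchy--Schwarz gives
\[
    \int_0^X w^2\,\rmd x \le w(0)+2\,\|u\|_{L^2(\mathbb{R}^+)}\Bigl(\int_0^X w^2\,\rmd x\Bigr)^{1/2}.
\]
This bounds $\int_0^X w^2\,\rmd x$ uniformly in $X$, whence $w\in L^2(\mathbb{R}^+)$ and $v=u+w\in L^2(\mathbb{R}^+)$. This is the easy half: it is exactly the case in which the uncontrolled boundary term has a favourable sign.

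The delicate case is $w\le0$, where the principal (recessive) solution enters and $-w(X)=|w(X)|$ works against the estimate. Here $r:=\phi/\psi$ is nonincreasing; by the representation of solutions quoted above it satisfies $r'=c\,\psi^{-2}$ for a constant $c\le0$ (if $c=0$ then $v=u$, so assume $c<0$). Positivity of $r$ forces $\int_0^\infty\psi^{-2}\,\rmd x<\infty$, and with $G(x):=\int_x^\infty\psi^{-2}\,\rmd t$ one gets $r\ge|c|\,G$ and hence the pointwise bound $|w|\le\psi^{-2}/G$. The whole matter thus reduces to the analytic statement: \emph{if $f>0$ is locally absolutely continuous with $f\in L^1(\mathbb{R}^+)$ and $(\log f)'\in L^2(\mathbb{R}^+)$, then $f/G\in L^2(\mathbb{R}^+)$}, applied to $f=\psi^{-2}$, for which $(\log f)'=-2u\in L^2(\mathbb{R}^+)$. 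I expect this lemma to be the main obstacle.

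To prove it, set $g:=(\log f)'$ and $h:=f/G=-(\log G)'\ge0$. The crucial first step is the \emph{boundedness} of $h$: from $|\log f(y)-\log f(x)|\le\|g\|_{L^2(x,\infty)}\sqrt{y-x}$ one obtains $f(y)\ge f(x)\exp\bigl(-\|g\|_{L^2(x,\infty)}\sqrt{y-x}\bigr)$, and integrating in $y$ gives $G(x)\ge 2f(x)/\|g\|_{L^2(x,\infty)}^2$, so that $h\le\tfrac12\|g\|_2^2=:M$; it is precisely the $L^2$-control of $(\log f)'$, which excludes exponential decay of $f$, that makes this work. A short computation gives $h'=h^2+gh$, so integrating yields
\[
    \int_0^X h^2\,\rmd x = h(X)-h(0)-\int_0^X g\,h\,\rmd x \le M+\|g\|_2\Bigl(\int_0^X h^2\,\rmd x\Bigr)^{1/2},
\]
and the uniform bound follows as before. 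This gives $w\in L^2(\mathbb{R}^+)$ and hence $v\in L^2(\mathbb{R}^+)$, completing the case $w\le0$. Finally, the statement with $L^2(\mathbb{R}^-)$ follows by the reflection $x\mapsto-x$, which preserves the class of Miura potentials and interchanges the two half-lines.
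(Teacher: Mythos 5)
Your proof is correct, and it necessarily differs from the paper's, because the paper offers no argument of its own for this lemma: it is stated as ``a consequence of the proof of Lemma~4.1 in~\cite{KPST:2005}'', so what you have written is a self-contained substitute for an external reference. Your structure --- write $u=\psi'/\psi$, $v=\phi'/\phi$ with $\psi,\phi\in\Pos(q)$, show $w=v-u$ solves $w'=-2uw-w^{2}$ and has fixed sign, then split on that sign --- is sound, and both halves check out: the favourable sign is a direct quadratic inequality in $\|w\|_{L^{2}(0,X)}$; for the unfavourable sign, your reduction via the Hartman representation $\phi=\psi\bigl(1+c\int_{0}^{x}\psi^{-2}\bigr)$ (which the paper itself quotes) to the statement ``$f\in L^{1}(\mathbb{R}^{+})$, $f>0$, $(\log f)'\in L^{2}(\mathbb{R}^{+})$ imply $f/\int_{x}^{\infty}f\in L^{2}(\mathbb{R}^{+})$'' is exactly right, and that statement is where the real content lies. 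Its proof is the decisive device: the lower bound $f(y)\ge f(x)\exp\bigl(-\|g\|_{L^{2}(x,\infty)}\sqrt{y-x}\bigr)$, $g=(\log f)'$, gives the pointwise bound $f(x)/\int_{x}^{\infty}f\le\frac12\|g\|^{2}_{L^{2}(x,\infty)}$, and then the identity $h'=h^{2}+gh$ for $h=f/\int_{x}^{\infty}f$ closes the loop by the same quadratic-inequality trick; the reflection argument for $L^{2}(\mathbb{R}^{-})$ is routine. What your route buys is an elementary, quantitative proof with explicit bounds on $\|v\|_{L^{2}(\mathbb{R}^{+})}$ in terms of $\|u\|_{L^{2}(\mathbb{R}^{+})}$ and the Wronskian, and it makes visible the principal/non-principal solution dichotomy that is the reason the lemma is true; what the paper's citation buys is brevity. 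Two small points to tighten in a final write-up: (i) justify constancy of the Wronskian $W=\psi\phi'-\psi'\phi$ in this distributional setting --- e.g.\ note $W=\psi\phi w$ and compute $(\psi\phi w)'=\psi\phi\,(uw+vw+w')=0$ from your own Riccati identity, or obtain the fixed sign of $w$ directly from the Hartman representation via $w=(\log r)'=c\,\psi^{-2}/r$; (ii) in the pointwise bound for $h$, remark that $\|g\|_{L^{2}(x,\infty)}>0$ for every $x$ (otherwise $f$ would be eventually constant, contradicting $f\in L^{1}$ and $f>0$), so the division is legitimate.
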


The following simple uniqueness result is very important for the present paper.

\begin{lemma}\label{lem:u-uniq}
Suppose that $q\in H^{-1}(\mathbb{R})$ and that~$u_{1}$ and~$u_{2}$ are
Riccati representatives of~$q$ belonging to~$X_0^+$. Then $u_{1}=u_{2}$. Similarly, if~$u_{1}$ and~$u_{2}$ are Riccati representatives of~$q$ belonging to~$X_0^-$, then~$u_{1}=u_{2}$.
\end{lemma}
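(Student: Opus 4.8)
The plan is to exploit the Riccati (nonlinear) structure to convert the equality $Bu_1 = Bu_2$ into a first-order \emph{linear} homogeneous equation for the difference $w:=u_1-u_2$, and then to show that the integrability on $(0,\infty)$ leaves no room for a nonzero solution. Since $u_i'=q-u_i^2$ in the sense of distributions, subtracting the two identities gives
\[
    w' = u_2^2-u_1^2 = -(u_1+u_2)\,w,
\]
an identity of distributions on $\mathbb{R}$. Recall that $w$ is known a priori to be continuous, since two Riccati representatives of the same $q$ differ by a continuous function, so there is no difficulty interpreting $w$ pointwise.

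First I would work on the half-line $(0,\infty)$. There $u_1,u_2\in X_0^+=L^1(0,\infty)\cap L^2(0,\infty)$, so both $u_1+u_2$ and $w$ lie in $L^2(0,\infty)$, and hence the right-hand side $(u_1+u_2)\,w$ lies in $L^1(0,\infty)$ by Cauchy--Schwarz. Thus $w'\in L^1(0,\infty)$, $w$ is locally absolutely continuous, and the standard integrating-factor argument for linear equations with (locally) integrable coefficients yields, for any base point $a>0$,
\[
    w(x) = w(a)\,\exp\!\Bigl(-\!\int_a^x (u_1+u_2)(t)\,\rmd t\Bigr),
    \qquad x>0 .
\]

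Now the two integrability properties enter in complementary ways. Because $u_1+u_2\in L^1(0,\infty)$, the exponent converges as $x\to\infty$ to the finite number $-\int_a^\infty(u_1+u_2)$, so the exponential factor tends to a strictly positive, finite limit; consequently $w(x)$ tends to a finite limit $L$. On the other hand $w=u_1-u_2\in L^2(0,\infty)$, and a nonzero limit at $+\infty$ is incompatible with square-integrability on a half-line; hence $L=0$. Since the exponential factor is nonzero, this forces $w(a)=0$, and as $a>0$ was arbitrary, $w\equiv0$ on $(0,\infty)$, that is, $u_1=u_2$ there.

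Finally I would upgrade equality on $(0,\infty)$ to equality on all of $\mathbb{R}$: the coefficient $u_1+u_2$ lies in $L^2_{\mathrm{loc}}(\mathbb{R})$, so $w$ is absolutely continuous on $\mathbb{R}$ and solves the homogeneous linear equation $w'=-(u_1+u_2)\,w$ globally; vanishing of $w$ at a single point then propagates to $w\equiv0$ by uniqueness for this equation, giving $u_1=u_2$. The $X_0^-$ statement follows from the mirror-image argument on $(-\infty,0)$, letting $x\to-\infty$. The point requiring care, and really the crux of the lemma, is the \emph{simultaneous} use of the two hypotheses: integrability on the half-line makes the integrating factor converge to a finite nonzero limit, while square-integrability kills the resulting constant. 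Neither condition alone suffices, which is precisely why the unique-representative class is cut out by $X_0^\pm=L^1\cap L^2$ on the respective half-lines.
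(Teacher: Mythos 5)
You give a correct proof, but by a genuinely different argument than the paper's. The paper works with positive solutions of the zero-energy Schr\"odinger equation rather than with the Riccati equation directly: for any representative $u\in X_0^+\cap B^{-1}(q)$, the function $\varphi(x)=\exp\bigl(\int_0^x u(s)\,\rmd s\bigr)$ is bounded above and below by positive constants on $(0,\infty)$ because $u\in L^1(0,\infty)$, so $\int_0^\infty\varphi^{-2}$ diverges; by the structure theory recalled in Subsection~\ref{sec.miura} (the set $\Pos(q)$ consists of convex combinations of two extremal solutions, with $\varphi_+$ characterized by exactly this divergence), $\varphi$ must coincide with $\varphi_+$, whence every such representative equals $\varphi_+'/\varphi_+$ and equality on all of $\mathbb{R}$ comes for free. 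You instead subtract the two Riccati identities to obtain the linear equation $w'=-(u_1+u_2)w$ for $w=u_1-u_2$, solve it by an integrating factor, and play the $L^1$ hypothesis (the exponent converges, so $w$ has a limit at $+\infty$) against decay (a nonzero limit is incompatible with integrability on a half-line); you then need one extra step to propagate $w\equiv0$ from $(0,\infty)$ to all of $\mathbb{R}$, which the paper's globally defined $\varphi_+$ renders unnecessary. What each buys: your route is elementary and self-contained, using nothing beyond Cauchy--Schwarz and uniqueness for linear ODEs with locally integrable coefficients; the paper's is two lines given the machinery of \cite{KPST:2005} already set up, and it identifies the unique representative concretely as the logarithmic derivative of the extremal solution --- the object that reappears, for instance, in Example~\ref{ex.delta}.

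One peripheral remark of yours is inaccurate: it is not true that ``neither condition alone suffices.'' Your own argument shows that the $L^1$ half of the hypothesis does all the work: $u_1+u_2\in L^1(0,\infty)$ makes $w$ converge at $+\infty$, and a nonzero limit contradicts $w\in L^1(0,\infty)$ just as well as it contradicts $w\in L^2(0,\infty)$. (The paper's proof likewise uses only the $L^1$ property, via the two-sided bounds on $\varphi$.) By contrast, square-integrability alone genuinely fails, as Lemma~\ref{lem:uL2} together with the generic non-uniqueness of $L^2$ representatives shows. The $L^2$ half of $X_0^\pm$ is there for the ambient setting --- it is what keeps $u^2$, hence $q=u'+u^2$, in the right distribution class --- not to rule out a second representative.
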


\begin{proof}
If $u\in X_0^+=L^{1}({\mathbb{R}}^{+})\cap L^{2}({\mathbb{R}}^{+})$ belongs to~$B^{-1}(q)$, then
$\varphi(x):=\exp\left(\int_{0}^{x}u(s)\,\rmd s\right)$ is bounded above and
below by fixed constants for $x>0$, so $\int_{0}^{\infty}\varphi(s)^{-2}\,\rmd s$
diverges. Since $\varphi$ solves the equation $-y'' + qy=0$, we conclude that $\varphi=\varphi_{+}$ is an extremal positive solution, and $u=\varphi_{+}^{\prime}/\varphi_{+}$.
Hence $u_{1}=u_{2}=\varphi_{+}^{\prime}/\varphi_{+}$. The other proof is similar.
\end{proof}

Recall that $X=L^{1}(\mathbb{R})\cap L^{2}(\mathbb{R})$. We immediately obtain
from Lemma~\ref{lem:u-uniq} the following uniqueness result:

\begin{corollary}
\label{cor:u-uniq} Every $q\in H^{-1}({\mathbb{R}})$ has at most one Riccati
representative $u$ belonging to~$X$.
\end{corollary}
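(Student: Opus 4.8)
The plan is to read off this corollary directly from Lemma~\ref{lem:u-uniq}, of which it is simply the global counterpart. The essential observation is that membership in the two-sided space $X=L^{1}(\mathbb{R})\cap L^{2}(\mathbb{R})$ forces, by restriction, membership in the one-sided space $X_0^+=L^{1}(\mathbb{R}^+)\cap L^{2}(\mathbb{R}^+)$, so the half-line uniqueness already established is all that is needed.

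First I would suppose that $u_{1}$ and $u_{2}$ are two Riccati representatives of a fixed $q\in H^{-1}(\mathbb{R})$ with $u_{1},u_{2}\in X$. Since both functions are integrable and square-integrable over all of $\mathbb{R}$, their restrictions to the positive half-line $(0,\infty)$ belong to $L^{1}(0,\infty)\cap L^{2}(0,\infty)=X_0^+$; at the same time each remains a genuine (global) Riccati representative, i.e.\ an element of $B^{-1}(q)$. Thus $u_{1}$ and $u_{2}$ satisfy all the hypotheses of the first assertion of Lemma~\ref{lem:u-uniq}, and applying that lemma yields $u_{1}=u_{2}$.

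I expect no real obstacle here: all of the analytic content --- namely that an $L^{1}\cap L^{2}$ tail on a half-line pins the Riccati representative to the extremal positive solution $\varphi_{+}$ through the formula $u=\varphi_{+}'/\varphi_{+}$ --- has already been packaged into Lemma~\ref{lem:u-uniq}, and the only point that remains is the trivial fact that restriction embeds $X$ into $X_0^+$. Note that a single half-line suffices: one could run the identical argument with $X_0^-$ in place of $X_0^+$, invoking the second assertion of the lemma instead, and there is no need to combine the two (which would instead amount to showing that the extremal solutions $\varphi_{+}$ and $\varphi_{-}$ coincide).
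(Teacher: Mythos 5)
Your proposal is correct and is exactly the paper's argument: the paper states the corollary as an immediate consequence of Lemma~\ref{lem:u-uniq}, precisely because any $u\in X$ restricts to an element of $X_0^+$ while remaining a global Riccati representative, so the half-line uniqueness (which, via $u=\varphi_+'/\varphi_+$, is equality on all of $\mathbb{R}$) applies verbatim. Your closing remark that one half-line suffices, and that invoking both would needlessly require comparing $\varphi_+$ and $\varphi_-$, is also consistent with how the lemma is used in the paper.
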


\section{Direct Scattering}\label{sec.direct}

Although direct scattering for the ZS-AKNS\ system is well-understood (see, for
example, Beals and Coifman~\cite{BC:1984}, Zhou~\cite{Zhou:1998}, and the
monographs of Faddeev and Takhtajan~\cite{FT:2007} and Beals, Deift, and Tomei~\cite{BDT:1988}), we give a brief synopsis here. We will study the direct
scattering map, derive representation formulas for the scattering solutions,
and finally derive the Gelfand--Levitan--Marchenko equations and the reconstruction formula.

It is useful to factor out the leading behavior of solutions to~(\ref{eq.AKNS}) with $\mathrm{Q}$ of~\eref{eq.Q} by setting
\[
    \Psi(x,z)=\mathrm{M}(x,z)\,\exp(\rmi zx\sigma), %
\]
where $\mathrm{M}$ takes values in $2\times2$ complex matrices, so that%
\begin{equation}
    \mathrm{M}'_{x}=\rmi z\, \ad_\sigma (\mathrm{M}) + \mathrm{Q}(x)\mathrm{M}, \label{eq.m}%
\end{equation}
where%
\begin{equation*}
    \ad_{\mathrm{A}}(\mathrm{B}) := [\mathrm{A},\mathrm{B}] = \mathrm{A}\mathrm{B}-\mathrm{B}\mathrm{A}. 
\end{equation*}
To study scattering asymptotics, we remove the linear term from~(\ref{eq.m})
by setting%
\begin{equation} \label{eq.mn}%
    \mathrm{M}(x,z)=\exp(\rmi zx\,\ad_\sigma)\, \mathrm{N}(x,z),
\end{equation}
where%
\begin{equation*}  
    \exp(\rmi t\,\ad_\sigma)
         \left(\begin{array}[c]{cc}
                a & b\\
                c & d
                \end{array}\right)
        =\left(\begin{array}[c]{cc}
                a & e^{it}b\\
                e^{-it}c & d
                \end{array}\right)
\end{equation*}
is an automorphism of the Lie group
\[
    SU(1,1):=\left\{  \mathrm{Y}\in \mathrm{M}_{2}(\mathbb{C})\,:\, \mathrm{Y}^{\ast}\sigma \mathrm{Y}=\sigma,\quad\det \mathrm{Y}=1\right\}.
\]
Then $\mathrm{N}(x,z)$ obeys the differential equation
\begin{equation}
    \mathrm{N}'_{x}(x,z)=\mathrm{G}(x,z)\,\mathrm{N}(x,z) \label{eq.n}%
\end{equation}
where%
\begin{eqnarray}\label{eq.G}
    \mathrm{G}(x,z)    :=\exp(-\rmi zx\,\ad_\sigma)\, \mathrm{Q}(x)
              =\left(\begin{array}[c]{cc}%
                0 & \rme^{-\rmi zx}u(x)\\
                \rme^{\rmi zx}\overline{u(x)} & 0
                    \end{array}\right).
\end{eqnarray}
Since $\mathrm{G}$ is traceless and $\sigma \mathrm{G}+\mathrm{G}\sigma=0$, it is not difficult to see
that~$\det \mathrm{N}$ and $\mathrm{N}^{\ast}\sigma \mathrm{N}$ are independent of~$x$ for any solution
of~(\ref{eq.n}).

Now consider the singular initial value problem%
\begin{eqnarray}\label{eq.n.sivp}
    \eqalign{   \mathrm{N}'_{x}(x,z)     &=\mathrm{G}(x,z)\,\mathrm{N}(x,z) \cr
                \mathrm{N}(\infty,z)     &=\mathrm{I}.}
\end{eqnarray}
Reformulating this problem as an integral equation%
\begin{equation} \label{eq.n.ie}
    \mathrm{N}(x,z)=\mathrm{I}+\int_{\infty}^{x}\mathrm{G}(y,z)\mathrm{N}(y,z)\,\rmd y,
\end{equation}
it is not difficult to see that if $u\in L^{1}({\mathbb R})$ and $z=s$ is real, there exists a unique solution which is a continuous curve in $SU(1,1)$ with a well-defined limit as $x\rightarrow-\infty$. Tracing through the definitions and comparing
with (\ref{eq.Psipm.A}), we see that
\[
    \mathrm{A}(s)=\mathrm{N}(-\infty,s).
\]
Since $\mathrm{N}(-\infty,s)\in SU(1,1)$, we may write%
\begin{equation}\label{eq.N.-infty}
    \mathrm{N}(-\infty,s)=\left(\begin{array}[c]{cc}
                    a(s) & \overline{b}(s)\\
                    b(s) & \overline{a}(s)
                        \end{array}\right).
\end{equation}
In what follows, we will study the direct scattering map by studying the
singular initial value problem~(\ref{eq.n.sivp}). We will then return to~(\ref{eq.m}) in order to study the scattering solutions and their analyticity
properties. From these properties we can deduce the Gelfand--Levitan--Marchenko
equations and reconstruction formulas for~$u$.

\subsection{The Direct Scattering Map}

The basic objects of direct scattering are the reflection coefficients
(\ref{eq.r-}) and (\ref{eq.r+}) computed from the asymptotic behavior of
solutions to (\ref{eq.AKNS}). It is simplest to study the map
\[
    u\mapsto    \left( \begin{array}[c]{cc}%
                a(s) & \overline{b}(s)\\
                b(s) & \overline{a}(s)
                \end{array} \right)
\]
using~(\ref{eq.n.sivp}) and (\ref{eq.N.-infty}) and deduce properties of the reflection coefficients from
those of~$a$ and~$b$.

The first result is:

\begin{lemma}
\label{lemma.ab}Suppose that $u\in X$. Then the representations%
\begin{eqnarray}
    a(s)  &  =1+\int_{0}^{\infty}
        \rme^{\rmi s\zeta}A(\zeta)\,\rmd\zeta,\label{eq.a.rep}\\
    b(s)  &  =\int_{-\infty}^{\infty}
        \rme^{\rmi s\zeta}B(\zeta)\,\rmd\zeta\label{eq.b.rep}%
\end{eqnarray}
hold, where $A,B\in X$ with%
\begin{eqnarray}
    \| A\| _{1},\| B\| _{1}
        &  \leq\exp\left(\| u\| _{1}\right)-1, \label{eq.AB.L1}\\
    \| A\| _{2},\| B\| _{2}
        &  \leq\exp\left(\| u\| _{1}\right) \| u\| _{2}. \label{eq.AB.L2}%
\end{eqnarray}
Moreover, the maps $u\mapsto A$ and $u\mapsto B$ are analytic in $u$ and
$\overline{u}$ in the sense of De\-fi\-ni\-ti\-on~\ref{def.anal}.
\end{lemma}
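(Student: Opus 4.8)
The plan is to solve the integral equation~(\ref{eq.n.ie}) by iteration (Neumann/Picard series), extract the matrix entries of $\mathrm{N}(-\infty,s)$ to identify $a(s)-1$ and $b(s)$ as Fourier transforms of functions $A$ and $B$, and then read off all the estimates and analyticity from the structure of the series. Writing $\mathrm{N}=\sum_{n\ge 0}\mathrm{N}_n$ with $\mathrm{N}_0=\mathrm{I}$ and
\[
    \mathrm{N}_{n}(x,z)=\int_{\infty}^{x}\mathrm{G}(y,z)\,\mathrm{N}_{n-1}(y,z)\,\rmd y,
\]
one checks by inspecting the off-diagonal structure of $\mathrm{G}$ in~(\ref{eq.G}) that the diagonal entries of $\mathrm{N}$ accumulate only even powers of $\mathrm{G}$ while the off-diagonal entries accumulate odd powers. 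Taking $x\to-\infty$ and reading off the $(1,1)$ and $(2,1)$ entries from~(\ref{eq.N.-infty}) then gives $a(s)$ and $b(s)$ as multiple integrals of products of $\rme^{\pm\rmi z y_j}u(y_j)$ or $\overline{u}(y_j)$ over nested simplices in the $y_j$.

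First I would carry out the $n=1,2$ terms explicitly to see the pattern: a single factor contributes $\int \rme^{\rmi s y}\overline{u}(y)\,\rmd y$ to $b$, which is already of the form~(\ref{eq.b.rep}) with a density supported by $\overline u$; a product of two factors $\rme^{-\rmi z y_1}u(y_1)\rme^{\rmi z y_2}\overline u(y_2)$ contributes to $a$, and the combined phase $\rme^{\rmi s(y_2-y_1)}$ shows that after the change of variables to $\zeta=y_2-y_1$ the exponential decouples, so that the $n$-th term is genuinely a Fourier transform $\int \rme^{\rmi s\zeta}(\,\cdot\,)\,\rmd\zeta$ of an explicit convolution-type kernel. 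Summing over $n$ defines $A$ and $B$; the support statement (that $A$ is supported on $(0,\infty)$) follows because the diagonal terms only ever produce nonnegative values of $\zeta$ from the ordering $y_1>y_2>\cdots$ on the simplex, whereas the off-diagonal phases for $b$ range over all of $\mathbb{R}$.

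For the estimates, the $L^1$ bounds~(\ref{eq.AB.L1}) come from majorizing the $n$-fold simplex integral of $|u(y_1)|\cdots|u(y_n)|$ by $\|u\|_1^{\,n}/n!$ and summing the series to $\exp(\|u\|_1)-1$ (the $n=0$ term $\mathrm{I}$ being subtracted off in $a-1$). For the $L^2$ bounds~(\ref{eq.AB.L2}), I would peel off one factor of $u$ to carry the $L^2$ norm and bound the remaining $n-1$ factors in $L^1$, using a mixed Young/convolution inequality so that $\|A_n\|_2,\|B_n\|_2\le \|u\|_2\,\|u\|_1^{\,n-1}/(n-1)!$, which sums to $\exp(\|u\|_1)\|u\|_2$. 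These are the standard manipulations needed to put $A,B\in X=L^1\cap L^2$.

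For analyticity in the sense of Definition~\ref{def.anal}, I would observe that each term $\mathrm{N}_n$ is a fixed multilinear expression in the arguments $u$ and $\overline u$ (exactly $\lceil n/2\rceil$ and $\lfloor n/2\rfloor$ copies of each, arranged by parity), so replacing $\overline u$ by an independent variable $v$ exhibits $A$ and $B$ as locally uniformly convergent power series in the two Banach-space variables $(u,v)\in X\times X$; the uniform convergence is exactly the content of the factorial bounds just established. This makes the maps $(u,v)\mapsto A,\,B$ analytic as maps $X\times X\to X$, hence $u\mapsto A,\,B$ analytic in $u$ and $\overline u$ in the required sense. \textbf{The main obstacle} I expect is bookkeeping rather than conceptual: one must track the phases carefully enough to prove that the exponential collapses to a \emph{single} frequency $\rme^{\rmi s\zeta}$ at each order (so that $a-1$ and $b$ really are Fourier transforms of $X$-functions, with the correct half-line support for $A$), and simultaneously organize the simplex integrals so that the same combinatorial factor $1/n!$ delivers both the $L^1$ bound and, after extracting one $L^2$ factor, the $L^2$ bound. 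Making the telescoping of phases and the change of variables uniform across all orders $n$ is the part that requires care.
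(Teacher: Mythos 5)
Your proposal is correct and follows essentially the same route as the paper's proof: iterating the Volterra equation~(\ref{eq.n.ie}), splitting the series by parity into the diagonal/off-diagonal entries, rewriting each term as a Fourier transform via the telescoping phase $\varphi_n(\rmy)=\sum_{j}(-1)^j y_{n-j}$ (the paper formalizes your change of variables as integration over the level sets $\{\varphi_n(\rmy)=\zeta\}$ with $\rmd\rmy=\rmd S_n\,\rmd\zeta$, which also yields the half-line support of $A$ since $\varphi_{2n}\geq 0$ on the ordered simplex), and obtaining the $L^1$ and $L^2$ bounds exactly as you describe, with analyticity read off from the multilinear structure. No substantive differences.
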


\begin{proof}
To obtain the representations (\ref{eq.a.rep})--(\ref{eq.b.rep}) and the
estimates (\ref{eq.AB.L1})--(\ref{eq.AB.L2}), we study the Volterra series for
(\ref{eq.n.ie}). A straightforward computation with (\ref{eq.n.ie}) using
(\ref{eq.G}) shows that the entries~$n_{11}$ and $n_{21}$ of~$\mathrm{N}$ obey the system of integral equations%
\begin{eqnarray}
    n_{11}(x,z)  &  =1+\int_{\infty}^{x}
        \rme^{-\rmi zy}u(y)n_{21}(y,z)\,\rmd y, \label{eq.n11}\\
    n_{21}(x,z)  &  =\int_{\infty}^{x}
        \rme^{\rmi zy}\overline{u}(y)n_{11}(y,z)\,\rmd y. \label{eq.n21}%
\end{eqnarray}
We can iterate equations (\ref{eq.n11})--(\ref{eq.n21}) to obtain Volterra
series representations, and then obtain the required formulas for $a$ and $b$
since, by (\ref{eq.N.-infty}), we have~$a=n_{11}(-\infty,\,\cdot\,)$ and~$b=n_{21}(-\infty,\,\cdot\,)$. First, we have%
\begin{eqnarray}
    n_{11}(x,s)  &  =1+\sum_{n=1}^{\infty}T_{2n}(x,s),\label{eq.n11.series}\\
    n_{21}(x,s)  &  =\sum_{n=0}^{\infty}T_{2n+1}(x,s). \label{eq.n21.series}%
\end{eqnarray}
Here%
\[
    T_{n}(x,s):=\int_{x\leq y_{1}\leq\cdots\leq y_{n}}
        \exp(\rmi z\varphi_{n}(\rmy))U_{n}(\rmy)~\rmd\rmy,
\]
where~$\rmy:=(y_1,\dots,y_n)$,
\begin{eqnarray*}
    \varphi_{n}(\rmy):=\sum_{j=0}^{n-1}(-1)^{j}y_{n-j}, \label{eq.phin} \\%
    U_{2n}(\rmy):=u(y_{1})\overline{u}(y_{2})\cdots u(y_{2n-1})\overline{u}(y_{2n}),
\end{eqnarray*}
and%
\begin{equation*}
    U_{2n+1}(\rmy)
        :=-\overline{u}(y_{1})u(y_{2})\overline{u}(y_{2})
            \cdots u(y_{2n})\overline{u}(y_{2n+1}). 
\end{equation*}
Note that $\varphi_{2n}(\rmy)\geq0$ if $y_{1}\leq\cdots\leq y_{2n}$, whereas
$\varphi_{2n-1}(\rmy)\geq x$ for $\rmy$ in the range of integration for~$T_{2n-1}$. We may write%
\begin{eqnarray*}
    T_{2n}(x,s)
        &  =\int_{0}^{\infty}\rme^{\rmi s\zeta} A_{2n}(x,\zeta)\,\rmd\zeta,
                    \label{eq.T2n}\\
    T_{2n+1}(x,s)
        &  =\int_{x}^{\infty}\rme^{\rmi s\zeta} B_{2n+1}(x,\zeta)\,\rmd\zeta,
                    \label{eq.T2n+1}%
\end{eqnarray*}
where%
\begin{eqnarray*}
    A_{2n}(x,\zeta)  &  :=\int_{\overset{\varphi_{2n}(\rmy)=\zeta}
        {\underset{x\leq y_{1}\leq\cdots\leq y_{2n}}\null}}
            U_{2n}(\rmy)\,\rmd S_{2n},\label{eq.,A2n}\\
    B_{2n+1}(x,\zeta)  &  :=\int_{\overset{\varphi_{2n+1}(\rmy)=\zeta}
        {\underset{x\leq y_{1}\leq\cdots\leq y_{2n+1}}\null}}
            U_{2n+1}(\rmy)\,\rmd S_{2n+1} \label{eq.B2n+1}%
\end{eqnarray*}
are multilinear functions of $u$ and $\overline{u}$. Here $\rmd S_{n}$ is the surface
measure on the hypersurface~$\left\{\rmy\in\mathbb{R}^{n}\,:\,
        \varphi_{n}(\rmy)=\zeta\right\}$.
Using the fact that $\rmd \rmy=\rmd S_{n}\rmd\zeta$ and letting
$\eta$ and $\gamma$ be the bounded monotone functions%
\begin{eqnarray*}
    \eta(x)  &  :=\int_{x}^{\infty}| u(t)| \,\rmd t,\\
    \gamma(x)  &  :=\left(\int_{x}^{\infty}| u(t)|^{2}\rmd t\right)^{1/2},
\end{eqnarray*}
we can easily derive the estimates
\begin{eqnarray*}
    \| A_{2n}(x,\,\cdot\,)\|_{1} &  \leq\frac{\eta(x)^{2n}}{(2n)!},\\
    \| B_{2n+1}(x,\,\cdot\,)\| _{1}  & \leq\frac{\eta(x)^{2n+1}}{(2n+1)!},\\
    \| A_{2n}(x,\,\cdot\,)\| _{2}  &  \leq\frac{\eta(x)^{2n-1}}{(2n-1)!}
                \gamma(x),\\
    \| B_{2n+1}(x,\,\cdot\,)\| _{2}  &  \leq\frac{\eta(x)^{2n}}{(2n)!}
                \gamma(x).
\end{eqnarray*}

These estimates show that the Volterra series (\ref{eq.n11.series})--(\ref{eq.n21.series}) converge and define bounded absolutely continuous functions of~$x$ having finite limits as~$x\rightarrow-\infty$ for each~$s$. By taking limits in~(\ref{eq.n11.series})--(\ref{eq.n21.series}), it is then easy to deduce that~$a$ and~$b$ have the series representations
\begin{eqnarray*}
    a(s)  &  =1+\sum_{n=1}^{\infty}T_{2n}(s),\\
    b(s)  &  =\sum_{n=1}^{\infty}T_{2n-1}(s),
\end{eqnarray*}
where
\[
    T_{n}(s):=\lim_{x\rightarrow-\infty}T_{n}(x,s)
\]
is a bounded continuous function of~$s$ for each~$n$. Setting~$A_{2n}%
(\zeta)=A_{2n}(-\infty,\zeta)$ and $B_{2n-1}(\zeta)=B_{2n-1}(-\infty,\zeta)$
and defining%
\begin{eqnarray*}
    A(\zeta)  &  :=\sum_{n=1}^{\infty}A_{2n}(\zeta),\\
    B(\zeta)  &  :=\sum_{n=0}^{\infty}B_{2n+1}(\zeta),
\end{eqnarray*}
we obtain the required representations. The continuity follows from the
estimates%
\begin{eqnarray*}
    \| A_{2n}-\widetilde{A}_{2n}\| _{p}
        &  \leq\frac{C^{2n-1}}{(2n-1)!}\| u-\widetilde{u}\|_{p},\\
    \| B_{2n+1}-\widetilde{B}_{2n+1}\| _{p}
        &  \leq\frac{C^{2n}}{(2n)!}\| u-\widetilde{u}\|_{p}%
\end{eqnarray*}
for $p=1,2$ where $C:=\| u\| _{1}+\|\widetilde{u}\|_{1}$.
\end{proof}

Since $\mathrm{A}(s)\in SU(1,1)$ for every $s\in\mathbb R$, we have the identity
\begin{equation}\label{eq.a-b}
    |a(s)|^2 - |b(s)|^2=1,
\end{equation}
and thus $a$ does not vanish on the real line. Moreover, the integral representation~\eref{eq.a.rep} shows that the function~$a$ is the boundary value of a function of the complex variable~$z$ (also denoted by~$a$) that is defined in the closed upper half-plane $\overline{\mathbb C^+}$ by the formula
\[
    a(z) := 1+\int_{0}^{\infty}
        \rme^{\rmi z\zeta}A(\zeta)\,\rmd\zeta.
\]
The function~$a$ so defined is continuous in~$\overline{\mathbb C^+}$, analytic in~${\mathbb C}^+$, and has no zeros there (see, e.g.,~\cite{AKNS:1974}, \cite[Sect.~3]{DZ:2003}).

\begin{remark}\label{rem.a}
Since $a$ does not vanish on the real line and tends to $1$ at infinity in view of~\eref{eq.a.rep} and the Riemann--Lebesgue lemma, it follows from Lemma~\ref{lem:WL} that $a$ is an
invertible element of the Banach algebra $1\dotplus\widehat{X}$~(see~\ref{app.Wiener}). The function $a^{-1}$ can be extended to the upper half-plane~${\mathbb C}^+$ as an analytic function~$1/a(z)$; thus $a^{-1}$ belongs to the Hardy space~$H^2_+$ and
\[
    a^{-1}(z) = 1+\int_{0}^{\infty}C(\zeta)\rme^{\rmi z\zeta}\,\rmd\zeta
\]
for a function $C\in X^+_0$. Moreover, $C$ is analytic in $u$ and $\overline{u}$
in the sense of Definition~\ref{def.anal}, as follows from Lemma~\ref{lem.anal} by setting $\phi(z):=1/z$.
\end{remark}

Recalling the formulas (\ref{eq.r-}) and (\ref{eq.r+}), the constraint
$\left\vert a(s)\right\vert ^{2}-\left\vert b(s)\right\vert ^{2}=1$, and the
bound $\| a\| _{\infty}\leq1+\| A\| _{1}$,
it is easy to see that the reflection coefficients~$r_{+}$ and~$r_{-}$ are continuous with~$\| r_{\pm}\| _{\infty}<1$. By Remark~\ref{rem.a},
$r_{\pm}$ are also elements of the Banach algebra~$\widehat{X}$ (i.e., Fourier transforms of functions in~$X$) which
are analytic in $u$ and $\overline{u}$. Thus:

\begin{lemma}
\label{lemma.direct}The maps $\mathcal{F}_{+}\circ\mathcal{S}_{+}$ and
$\mathcal{F}_{-}\circ\mathcal{S}_{-}$ are continuous maps of $X$ into $X_{1}$,
analytic in the sense of Definition~\ref{def.anal}.
\end{lemma}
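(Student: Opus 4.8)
The plan is to assemble the lemma directly from the facts about $a$ and $b$ established in Lemma~\ref{lemma.ab} and Remark~\ref{rem.a}, together with the Banach-algebra structure of $\widehat{X}$ recalled in the appendix. The core observation is that $\mathcal{F}_\pm$ inverts the Fourier transform $F\mapsto\widehat{F}$ (up to the reflection $s\mapsto-s$, which preserves every norm in sight), so that the assertions ``$r_\pm\in\widehat{X}$ with $\|r_\pm\|_\infty<1$'' are equivalent to ``$F_\pm:=\mathcal{F}_\pm r_\pm\in X$ with $\widehat{F_\pm}=r_\pm$ up to reflection, hence $\|\widehat{F_\pm}\|_\infty<1$,'' which is precisely the statement $F_\pm\in X_1$.

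First I would record that $r_\pm\in\widehat{X}$. By Lemma~\ref{lemma.ab}, $a-1$ and $b$ lie in $\widehat{X}$, and by Remark~\ref{rem.a}, $a^{-1}\in 1\dotplus\widehat{X}$; since $\widehat{X}$ is closed under complex conjugation and under pointwise products (it is a Banach algebra, see~\ref{app.Wiener}), the quotients $r_-=b\,a^{-1}$ and $r_+=-\overline{b}\,a^{-1}$ belong to $\widehat{X}$. Next I would establish the strict bound: the $SU(1,1)$ identity $|a(s)|^2-|b(s)|^2=1$ gives $|r_\pm(s)|=|b(s)|/\sqrt{1+|b(s)|^2}<1$ pointwise, and since $r_\pm$ is continuous and vanishes at infinity by Riemann--Lebesgue (applied to $b\in\widehat{X}$), its supremum is attained on a compact set and is therefore strictly below~$1$. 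Applying $\mathcal{F}_\pm$ then places $F_\pm$ in $X$ with $\widehat{F_\pm}=r_\pm$ up to reflection, so $F_\pm\in X_1$.

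For continuity and analyticity I would propagate the corresponding properties of the constituent maps through the algebra operations. By Lemma~\ref{lemma.ab} the maps $u\mapsto A,B$, and hence $u\mapsto a,b\in\widehat{X}$, are continuous and analytic in $u$ and $\overline{u}$, and by Remark~\ref{rem.a} so is $u\mapsto a^{-1}$. Multiplication and conjugation are continuous in the Banach algebra $\widehat{X}$, and a product or conjugate of maps that are analytic in the sense of Definition~\ref{def.anal} is again analytic in that sense; thus $u\mapsto r_\pm$ is continuous and analytic, and composing with the bounded linear map $\mathcal{F}_\pm:\widehat{X}\to X$ preserves both properties. Since the norm on $X_1$ is induced from $X$, continuity into $X$ yields continuity into $X_1$.

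The step requiring the most care is the passage from the pointwise inequality $|r_\pm(s)|<1$ to the uniform bound $\|r_\pm\|_\infty<1$: one cannot merely quote $|a|^2-|b|^2=1$ but must also invoke the decay of $b$ at infinity to exclude a supremum approaching~$1$. A secondary point to verify is that $\mathcal{F}_\pm$ really inverts the hat transform with the intended reflection, so that $F_\pm$ lands in $X$ rather than merely in $L^2$, and that conjugation and pointwise multiplication preserve analyticity in the sense of Definition~\ref{def.anal}, which follows from expanding $r_\pm$ as the (absolutely convergent) product of the power series in $u$ and $\overline{u}$ furnished by Lemma~\ref{lemma.ab} and Remark~\ref{rem.a}.
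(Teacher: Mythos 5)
Your proposal is correct and takes essentially the same route as the paper: $r_\pm\in\widehat{X}$ via Lemma~\ref{lemma.ab}, Remark~\ref{rem.a} and the Banach-algebra structure of $\widehat{X}$, the strict bound from the $SU(1,1)$ identity, and continuity/analyticity propagated through the algebra operations and the linear maps $\mathcal{F}_\pm$. The only cosmetic difference is the step $\|r_\pm\|_\infty<1$: the paper gets it uniformly from $|a(s)|^2-|b(s)|^2=1$ together with $\|a\|_\infty\le 1+\|A\|_1$, whereas you invoke Riemann--Lebesgue decay and attainment of the supremum; note that your own formula $|r_\pm|=|b|/\sqrt{1+|b|^2}$, combined with $\|b\|_\infty\le\|B\|_1$ and the monotonicity of $t\mapsto t/\sqrt{1+t^2}$, yields the uniform bound directly without any compactness argument.
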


It will be important to formulate the exact relationship between~$r_{+}$ and~$r_{-}$. It follows from the definitions~\eref{eq.r-}--\eref{eq.r+} that
\[
    r_{-}(s)=-\overline{r_{+}(s)}\,\frac{\overline{a}(s)}{a(s)}.
\]
The relation~\eref{eq.a-b} yields for~$s\in \mathbb R$ the identity
\[
    |a(s)|^{-2} = 1 - |r(s)|^2,
\]
where $r=r_-$ or $r=r_+$, so that $|a|$ is determined by~$r_\pm$ on the real line~$\mathbb R$. Since the function~$a$ is bounded and continuous in~$\overline{\mathbb C^+}$, analytic in~$\mathbb C^+$, and has no zeros in~$\overline{\mathbb C^+}$, the function $\log a$ is well defined in~$\overline{\mathbb C^+}$ and analytic in~${\mathbb C}^+$. Observe that
\[
    \myRe \log a(s) = \log |a(s)| = - {1\over2} \log (1-|r(s)|^2);
\]
thus we can use the Schwarz formula to reconstruct the function~$\log a$ from its real part on~$\mathbb R$. Explicitly, we get%
\[
    a(z)
        =\exp\Bigl[  \frac{1}{2\pi \rmi}\int_{-\infty}^{\infty}
            \frac{1}{\zeta-z}
            \log\left(  1-\left\vert r(\zeta)\right\vert^{2}\right)\,\rmd\zeta\Bigr],
\]
where $r=r_{+}$ or $r_{-}$; the boundary value is
\begin{equation}\label{eq.a}
    a(s)
    =\exp \left\{ {\mathcal C}_+
        \left[ \log (  1-| r(\,\cdot\,)|^{2} ) \right](s)\right\},
\end{equation}
where~${\mathcal C}_{+}$ is the Cauchy operator~(\ref{eq.Cauchy}), which is bounded
from $L^{2}(\mathbb R)$ to itself. We also observe that the problem of reconstruction of the function $a$ in $\mathbb C^+$ from the values of $|a|$ on the real line could be reduced to the classical Riemann--Hilbert problem, cf.~\cite{DZ:2003,Zhou:1998}.

Let us denote by $\widehat{X}_{1}$ the image of $X_{1}$ under the Fourier
transform~\eref{eq.Fourier}. If $r\in\widehat{X}_{1}$, it follows from
Banach algebra properties of $\widehat{X}$ (see~\ref{app.Wiener}) and Lemma \ref{lem:WL} that,
also, $\log\left(  1-\left\vert r(\zeta)\right\vert ^{2}\right)
\in\widehat{X}$. By Lemma~\ref{lem.anal}, $\log\left(  1-\left\vert
r(\zeta)\right\vert ^{2}\right)$ is analytic in $r$ and $\overline{r}$ in
the sense of Definition \ref{def.anal}. From (\ref{eq.C+-}) we deduce that the function
\[
    g (s)= {\mathcal C}_{+}
        \left[ \log (  1-| r(\,\cdot\,)|^{2} ) \right](s)
\]
also belongs to~$\widehat{X}$ and is analytic in $r$ and $\overline{r}$ in
the sense of Definition \ref{def.anal}. It now follows from Lemma \ref{lem:WL}
that
\[
    a(s)-1 = \exp[g(s)]-1
\]
belongs to $\widehat{X}$ as well and is also analytic in the same
sense. Hence, $a$ of~\eref{eq.a} is an invertible element of $1\dotplus\widehat{X}$
whenever $r\in$ $\widehat{X}_{1}$.

Now we define a nonlinear map
\[
    \mathcal{I}:\widehat{X}_{1}\rightarrow\widehat{X}_{1}%
\]
by%
\[
    (\mathcal{I}r)(s)=-\overline{r(s)}~\frac{\overline{a}(s)}{a(s)},
\]
where $a$ is given by~(\ref{eq.a}). Note that in the Schr\"{o}dinger case, where (\ref{eq.r.real}) holds and,
also, $a(-s)=\overline{a(s)}$, the map~$\mathcal{I}$ is given by
\[
    r\mapsto-r(-s)\frac{a(-s)}{a(s)}%
\]
(cf. for example \cite{DT:1979}, \S 2.2, Theorem 1).

\begin{lemma}
The map $\mathcal{I}$ is a continuous involution of $\widehat{X}_{1}$,
analytic in $r$ and $\overline{r}$ in the sense of Definition \ref{def.anal}.
\end{lemma}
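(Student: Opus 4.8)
The plan is to establish, in order, that $\mathcal{I}$ maps $\widehat{X}_1$ into itself, that it is continuous and analytic in $r$ and $\overline r$ in the sense of Definition~\ref{def.anal}, and finally that $\mathcal{I}\circ\mathcal{I}=\mathrm{id}$.

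First I would check that $\mathcal{I}r\in\widehat{X}_1$. By the discussion preceding the lemma, the map $r\mapsto a$ given by \eref{eq.a} sends $\widehat{X}_1$ into the invertible elements of $1\dotplus\widehat X$. Since $\widehat X$ is closed under complex conjugation---if $g=\widehat F$ then $\overline g=\widehat{\widetilde F}$ with $\widetilde F(x):=\overline{F(-x)}\in X$---both $\overline a$ and $a^{-1}$ lie in $1\dotplus\widehat X$, and hence so does $\overline a/a$. As $\overline r\in\widehat X$ and $\widehat X$ is an ideal in $1\dotplus\widehat X$, the product $\mathcal{I}r=-\overline r\,(\overline a/a)$ belongs to $\widehat X$. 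On the real line $|\overline a(s)|=|a(s)|$, so $|(\mathcal{I}r)(s)|=|r(s)|$ for every $s$; therefore $\|\mathcal{I}r\|_\infty=\|r\|_\infty<1$ and $\mathcal{I}r\in\widehat X_1$.

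Next, continuity and analyticity follow by assembling $\mathcal{I}$ from building blocks already shown to have these properties. The map $r\mapsto a$ is continuous and analytic in $r$ and $\overline r$, and conjugation of the argument preserves this. For $r\mapsto\overline a$ the cleanest route is the identity $\overline a=[(1-|r|^2)a]^{-1}$, which holds in $1\dotplus\widehat X$ because $a\overline a=|a|^2=(1-|r|^2)^{-1}$ on $\mathbb R$; here $1-|r|^2=1-r\overline r\in 1\dotplus\widehat X$ is bounded below by $1-\|r\|_\infty^2>0$ and hence invertible by Lemma~\ref{lem:WL}, so $\overline a$ is expressed through $r$, $\overline r$ and $a$ by analytic operations. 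The passages $a\mapsto a^{-1}$ and $a\mapsto\overline a/a$ are then analytic by Lemma~\ref{lem.anal} with $\phi(z)=1/z$, and products of maps analytic in $r,\overline r$ are again analytic; since $\mathcal{I}r=-\overline r\,(\overline a/a)$, it inherits continuity and analyticity in $r$ and $\overline r$.

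Finally, the involution property rests on the observation that $a$ depends on $r$ only through $|r|^2$, via \eref{eq.a}. Writing $\rho:=\mathcal{I}r$, the first step gives $|\rho|=|r|$ pointwise, so $a[\rho]=a[r]=:a$. Taking conjugates in the definition yields $\overline\rho=-r\,(a/\overline a)$, whence
\[
    \mathcal{I}\rho=-\overline\rho\,\frac{\overline a}{a}
        =r\,\frac{a}{\overline a}\,\frac{\overline a}{a}=r,
\]
which is exactly $\mathcal{I}^2=\mathrm{id}$. The main obstacle is precisely this identity: everything turns on recognising that $\mathcal{I}$ leaves $a$ unchanged---a consequence of $a$ being reconstructed from $|r|$ alone together with $|\mathcal{I}r|=|r|$---after which the computation is a short manipulation of conjugates. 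The only other technical point, the analyticity of $r\mapsto\overline a$, is best handled through the explicit relation $\overline a=[(1-|r|^2)a]^{-1}$ rather than by conjugating the defining series of $a$.
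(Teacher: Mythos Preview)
Your proof is correct and follows essentially the same route as the paper's: continuity of $\mathcal{I}$ via the chain $r\mapsto a\mapsto \overline a/a$ and multiplication in $1\dotplus\widehat X$, the involution property via $|\mathcal{I}r|=|r|$ together with the fact that $a$ in \eref{eq.a} depends only on $|r|$, and analyticity by composing analytic building blocks. Your treatment is more explicit on one point the paper glosses over, namely the analyticity of $r\mapsto\overline a$: rather than appealing directly to conjugation (which is not manifestly analytic in the sense of Definition~\ref{def.anal}), you use the algebraic identity $\overline a=[(1-r\overline r)a]^{-1}$ to express $\overline a$ through operations already known to be analytic in $r$ and $\overline r$, which is a genuine clarification.
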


\begin{proof}
The map $r\mapsto a$ defined by the solution of the Schwarz problem above is continuous from $\widehat{X}_{1}$ to $1\dotplus\widehat{X}$ and
$a\mapsto a/\overline{a}$ is continuous from $1\dotplus\widehat{X}$
to itself. Multiplication by $r$ is continuous from~$1\dotplus\widehat{X}$ to~$\widehat{X}$, so we conclude that
$\mathcal{I}$ is continuous as claimed. To see that $\mathcal{I}$ is an
involution, we note that $\left\vert (\mathcal{I}r)(s)\right\vert =\left\vert
r(s)\right\vert $ and deduce from (\ref{eq.a}) that $\mathcal{I}^{2}r=r$.
Finally, the analyticity follows from analyticity of the map $r\mapsto
a$ and invertibility of~$a$ in $1 \dotplus \widehat{X}$.
\end{proof}

It now follows that the identities%
\begin{equation*}
    \mathcal{S}_{-}=\mathcal{I}\circ\mathcal{S}_{+} \label{eq.R+.to.R-}%
\end{equation*}
and%
\begin{equation*}
    \mathcal{S}_{+}=\mathcal{I}\circ\mathcal{S}_{-} 
\end{equation*}
hold.

\subsection{Scattering Solutions}

Recalling the scattering solutions $\Psi_{\pm}$ of (\ref{eq.AKNS}) with
\[
    \lim_{x\rightarrow\pm\infty}|\Psi_{\pm}(x,z)-\exp(\rmi zx\sigma)| =0,
\]
let us write
\[
\Psi_{\pm}(x,z)=\mathrm{M}_{\pm}(x,z)\exp(\rmi zx\sigma),%
\]
so that $\mathrm{M}_{\pm}$ solve (\ref{eq.m}) with
\[
\lim_{x\rightarrow\pm\infty} | \mathrm{M}_{\pm}(x,z)-\mathrm{I}| =0.
\]
These singular initial value problems can be reformulated as Volterra integral
equations%
\begin{equation}
\mathrm{M}_{\pm}(x,z)=\mathrm{I}+\int_{\pm\infty}^{x}\exp\bigl(\rmi z(x-y)\,\ad_\sigma\bigr)
    \left[  \mathrm{Q}(y)\mathrm{M}(y,z)\right]\,\rmd y. \label{eq.m.int}%
\end{equation}
These equations have a unique absolutely continuous solution for each real $z$
provided that $u\in L^{1}(\mathbb{R})$. \ The map $x\mapsto$ $\mathrm{M}_{\pm}(x,z)$ is
a continuous curve in $SU(1,1)$ for each fixed $z.$ Letting $m_{ij}^{\pm}$
denote the entries of $\mathrm{M}_{\pm}$, it is not difficult to see that
\begin{eqnarray}
    m_{11}^{+}(x,z)
        &  =1+\int_{+\infty}^{x} u(y)m_{21}^{+}(y,z)\,\rmd y,\label{eq.m11}\\
    m_{21}^{+}(x,z)
        &  =\int_{+\infty}^{x}\rme^{-\rmi z(x-y)}\overline{u}(y)m_{11}^{+}(y,z)\,\rmd y \label{eq.m21}%
\end{eqnarray}
(and similar formulas with all $+$ signs replaced by $-$ signs), while
\begin{eqnarray}
m_{12}^{+}(x,z)  &  =\overline{m_{21}^{+}(x,z)},\label{eq.m12.sym}\\
m_{22}^{+}(x,z)  &  =\overline{m_{11}^{+}(x,z)} \label{eq.m22.sym}%
\end{eqnarray}
(and similar formulas with $+$ replaced by $-$) since $\mathrm{M}_{\pm}$ take values
in $SU(1,1)$. Thus the matrix-valued function~$\mathrm{M}_+$ is determined by solutions of the system of two integral equations (\ref{eq.m11})--(\ref{eq.m21}), and ~$\mathrm{M}_-$ is determined by solutions of a similar system~for $m_{11}^{-}$ and $m_{21}^-$.

The integral equations (\ref{eq.m11})--(\ref{eq.m21}) can be analyzed in the
same way as equations (\ref{eq.n11})--(\ref{eq.n21}) in the proof of Lemma~\ref{lemma.ab}, or we can note from (\ref{eq.mn}) that%
\begin{eqnarray}
m_{11}^{+}(x,z)  =               &n_{11}^{+}(x,z), \label{eq.mn.11+}\\
m_{21}^{+}(x,z)  =\rme^{-\rmi zx}&n_{21}^{+}(x,z). \label{eq.mn.21+}%
\end{eqnarray}
It follows directly from the representations (\ref{eq.n11.series})--(\ref{eq.n21.series}) already established for $n_{11}^{+}$ and $n_{21}^{+}$
that%
\begin{eqnarray*}
    m^+_{11}(x,z)   = 1 + &\int_{0}^{\infty}
        \rme^{\rmi z\zeta}\Gamma_{11}^{+}(x,\zeta)\,\rmd\zeta,\\
    m^+_{21}(x,z)   =     &\int_{0}^{\infty}
        \rme^{\rmi z\zeta}\Gamma_{21}^{+}(x,\zeta)\,\rmd\zeta,
\end{eqnarray*}
where the estimates%
\begin{eqnarray*}
\| \Gamma_{11}^{+}(x,\,\cdot\,)\| _{1}
    &  \leq\exp\left[\eta(x)\right] -1 ,\\
\| \Gamma_{11}^{+}(x,\,\cdot\,)\| _{2}
    &  \leq\exp\left[\eta(x)\right]  \gamma(x)
\end{eqnarray*}
hold (note that in the series for $m_{21}^{+}$, the factor $\exp(-\rmi zx)$ in
(\ref{eq.mn.21+}) changes the phase function to $\varphi_{2n+1}(\rmy)-x$; this
phase function is nonnegative for any $x$).

The explicit multilinear expressions for $\Gamma_{11}^{+}(x,\,\cdot\,)$ and $\Gamma
_{21}^{+}(x,\,\cdot\,)$ show that they are analytic functions of $u$ and $\overline{u}$ in the sense of Definition~\ref{def.anal}, taking values in the space~$X_0^+$ for each real~$x$. Using the symmetry properties (\ref{eq.m12.sym})--(\ref{eq.m22.sym}), we recover the full matrix-valued function $\Gamma_{+}$. We can make an analogous analysis for $\mathrm{M}_{-}$, and obtain the following representations. In what follows, $X_0^\pm \otimes \mathrm{M}_2(\mathbb{C})$ denotes the space of matrix-valued functions over $\mathbb{R}^\pm$ with entries belonging to~$X_0^\pm$.

\begin{proposition}\label{prop.Gamma.rep}
Suppose that $u\in X$ and that $\mathrm{M}_{\pm}$
are the unique solutions of the integral equations (\ref{eq.m.int}). Then there
exist continuous functions
\[
    {\mathbb R} \ni x \mapsto \Gamma_+(x,\,\cdot\,)
            \in     X_0^+ \otimes \mathrm{M}_2(\mathbb{C})
\]
and
\[
 {\mathbb R} \ni x \mapsto \Gamma_-(x,\,\cdot\,)
            \in   X_0^- \otimes \mathrm{M}_2(\mathbb{C})
\]
with
\begin{equation*}
    \Gamma_{+}(x,\zeta)=\left(\begin{array}[c]{cc}%
        \Gamma_{11}^{+}(x,\zeta) & \overline{\Gamma_{21}^{+}(x,\zeta)}\\
        \Gamma_{21}^{+}(x,\zeta) & \overline{\Gamma_{11}^{+}(x,\zeta)}%
        \end{array}\right)  \label{eq.Gamma+.sym}%
\end{equation*}
and%
\begin{equation*}
    \Gamma_{-}(x,\zeta)=\left(\begin{array}[c]{cc}%
        \Gamma_{11}^{-}(x,\zeta) & \overline{\Gamma_{21}^{-}(x,\zeta)}\\
        \Gamma_{21}^{-}(x,\zeta) & \overline{\Gamma_{11}^{-}(x,\zeta)}%
        \end{array}\right), \label{eq.Gamma-.sym}%
\end{equation*}
so that the representations%
\begin{eqnarray}
    \mathrm{M}_{+}(x,z)  &  =\mathrm{I} + \int_{0}^{\infty}\Gamma_{+}(x,\zeta)
            \exp(\rmi z\zeta\sigma_{3})\rmd\zeta,\label{eq.m+.rep}\\
    \mathrm{M}_{-}(x,z)  &  =\mathrm{I} + \int_{-\infty}^{0}\Gamma_{-}(x,\zeta)
            \exp(\rmi z\zeta\sigma_{3})\rmd\zeta \label{eq.m-.rep}%
\end{eqnarray}
hold. Moreover, the estimates%
\begin{eqnarray*}
    \| \Gamma_{+}(x,\,\cdot\,)\| _{1}  &  \leq\exp[\eta_{+}(x)]-1,\\
    \| \Gamma_{-}(x,\,\cdot\,)\| _{1}  &  \leq\exp[\eta_{-}(x)]-1,\\
    \| \Gamma_{+}(x,\,\cdot\,)\| _{2}  &  \leq\exp[\eta_{+}(x)]
                \gamma_{+}(x),\\
    \| \Gamma_{-}(x,\,\cdot\,)\| _{2}  &  \leq\exp[\eta_{-}(x)]
                \gamma_{-}(x)
\end{eqnarray*}
hold, where
\begin{eqnarray*}
    \eta_{\pm}(x)  &  =\pm\int_{x}^{\pm\infty} | u(s) |\,\rmd s,\\
    \gamma_{\pm}(x)  &  =\Bigl(  \pm\int_{x}^{\pm\infty}
        | u(s)|^{2}\,\rmd s\Bigr)^{1/2}.
\end{eqnarray*}
Finally, for every fixed~$x\in {\mathbb R}$,
the maps~$u \mapsto \Gamma_\pm(x,\,\cdot\,)$ from~$X$ into~$X_0^\pm \otimes \mathrm{M}_2(\mathbb{C})$
are analytic in the sense of Definition~\ref{def.anal}.
\end{proposition}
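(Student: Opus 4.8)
The plan is to piggyback on the Volterra-series machinery already developed for Lemma~\ref{lemma.ab}, exploiting the identities (\ref{eq.mn.11+})--(\ref{eq.mn.21+}) that tie the entries of $\mathrm{M}_+$ to those of $\mathrm{N}$. First I would observe that the scalar integral equations (\ref{eq.m11})--(\ref{eq.m21}) for $m_{11}^+$ and $m_{21}^+$ are of exactly the same Volterra type as (\ref{eq.n11})--(\ref{eq.n21}), so iterating them produces series whose order-$n$ term is a multilinear expression in $u$ and $\overline u$ integrated against $\exp(\rmi z\,\varphi_n(\rmy))$ over a simplex with lower endpoint $x$. The crucial point, noted in the paragraph preceding the statement, is that the effective phase function $\varphi_{2n+1}(\rmy)-x$ (coming from the factor $\exp(-\rmi zx)$ in (\ref{eq.mn.21+})) is nonnegative on the domain of integration, while $\varphi_{2n}(\rmy)\geq0$ there as well; this guarantees that, after the change of variables $\rmd\rmy=\rmd S_n\,\rmd\zeta$ onto the level sets of the phase, the kernels $\Gamma_{11}^+(x,\,\cdot\,)$ and $\Gamma_{21}^+(x,\,\cdot\,)$ are supported on $(0,\infty)$, yielding the representation (\ref{eq.m+.rep}) with the integral running over $[0,\infty)$.

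Next I would assemble the full matrix kernel. The symmetry relations (\ref{eq.m12.sym})--(\ref{eq.m22.sym}), which hold because $\mathrm{M}_+$ takes values in $SU(1,1)$, fix the remaining two entries as the complex conjugates of $\Gamma_{21}^+$ and $\Gamma_{11}^+$, giving the stated form of $\Gamma_+$. The companion analysis for $\mathrm{M}_-$, indicated earlier only as ``analogous,'' runs identically after replacing every $+\infty$ by $-\infty$ in the integral equations (\ref{eq.m.int}); here the simplices have upper endpoint $x$ and the relevant phase functions are nonpositive, so the kernels $\Gamma_-(x,\,\cdot\,)$ are supported on $(-\infty,0]$, yielding (\ref{eq.m-.rep}).

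For the estimates I would rerun the surface-measure bounds from the proof of Lemma~\ref{lemma.ab} verbatim, except that the integrals now start at the finite point $x$ rather than being pushed to $-\infty$; this replaces the total masses $\|u\|_1,\|u\|_2$ by the tail quantities $\eta_\pm(x)$ and $\gamma_\pm(x)$. Thus the order-$n$ contribution is bounded by $\eta_\pm(x)^n/n!$ in $L^1$ and by $\eta_\pm(x)^{n-1}\gamma_\pm(x)/(n-1)!$ in $L^2$, and summing the series gives the factors $\exp[\eta_\pm(x)]-1$ and $\exp[\eta_\pm(x)]\gamma_\pm(x)$ claimed. Analyticity in $u$ and $\overline u$ is then immediate: each term is multilinear in $u,\overline u$ and hence analytic in the sense of Definition~\ref{def.anal}, and the estimates show the series converges locally uniformly in $X$, so for each fixed $x$ the limit $u\mapsto\Gamma_\pm(x,\,\cdot\,)$ is analytic.

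Finally, the continuity of $x\mapsto\Gamma_\pm(x,\,\cdot\,)$ into $X_0^\pm\otimes\mathrm{M}_2(\mathbb{C})$ is where I expect the real work to lie. Each individual series term depends continuously on $x$ because it is built from integrals over simplices whose endpoint is $x$, and the monotone tail functions $\eta_\pm,\gamma_\pm$ are continuous; since the termwise estimates above are uniform on compact $x$-intervals, a standard Weierstrass argument upgrades termwise continuity to continuity of the sum in the $X_0^\pm$-norm. The subtlety to check carefully is that varying $x$ moves not only the domain of integration but also the support of each kernel in the variable $\zeta$, so one must verify that small changes in $x$ produce small changes simultaneously in both the $L^1$- and the $L^2$-norm of $\Gamma_\pm(x,\,\cdot\,)$; this is controlled by the same tail estimates together with absolute continuity of the Lebesgue integral.
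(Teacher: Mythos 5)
Your proposal is correct and follows essentially the same route as the paper, which establishes this proposition in the discussion preceding its statement: the identities (\ref{eq.mn.11+})--(\ref{eq.mn.21+}) transfer the Volterra series from Lemma~\ref{lemma.ab}, the shifted phase $\varphi_{2n+1}(\rmy)-x\geq 0$ yields the support property, the tail quantities $\eta_\pm,\gamma_\pm$ replace $\|u\|_1,\|u\|_2$ in the termwise bounds, and the $SU(1,1)$ symmetry together with multilinearity gives the full matrix kernel and analyticity. Your added care about continuity in $x$ (termwise continuity plus uniform tail estimates) is a fine elaboration of what the paper leaves implicit.
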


\begin{remark}\label{rem.dependence}
The function $\Gamma_{+}(x,\zeta)$ depends only on
values of $u(y)$ with $y>x$, while the function $\Gamma_{-}(x,\zeta)$ depends
only on values of $u(y)$ with $y<x$. We will derive integral equations for
$\Gamma_{\pm}$ given the functions~$F_{\pm}$ of~(\ref{eq.F+})--(\ref{eq.F-}); the equation for $\Gamma_{+}$ will provide a
stable reconstruction of $u(x)$ on any half-line $\left(c,\infty\right)$,
while the equation for $\Gamma_{-}$ will provide a stable reconstruction for
any half-line $(-\infty,c)$.
\end{remark}

If we write $\mathrm{M}_{+}=\left(  m_{1}^{+},m_{2}^{+}\right)  $, where $m_{1}^{+}$ and
$m_{2}^{+}$ are column vectors, and similarly write $\mathrm{M}_{-}=\left(  m_{1}%
^{-},m_{2}^{-}\right)  $, then $\left(  m_{1}^{+},m_{2}^{-}\right)  $ extends
to an analytic matrix-valued function of $z$ in $\myIm z>0$,
while $(m_{1}^{-},m_{2}^{+})$ extends to an analytic matrix-valued function of
$z$ in $\myIm z<0$. \ Indeed, as an immediate corollary of
Proposition \ref{prop.Gamma.rep}, we have:

\begin{proposition}\label{prop.Hardy.m}%
For every fixed $x\in\mathbb{R}$, the functions
\[
      m_1^+(x, \,\cdot\,) - \left(\begin{array}[c]{c}1\\0\end{array}\right),	 \qquad
      m_2^-(x, \,\cdot\,) - \left(\begin{array}[c]{c}0\\1\end{array}\right)
\]
belong to the Hardy space~$H_+^2(\mathbb{R},\mathbb{C}^2)$.
\end{proposition}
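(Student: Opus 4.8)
The plan is to read the two column vectors directly off the integral representations~\eref{eq.m+.rep}--\eref{eq.m-.rep} of Proposition~\ref{prop.Gamma.rep} and to match each scalar component against the defining form of $H^2_+(\mathbb R)$, namely $f_+(s)=\int_0^\infty \rme^{\rmi s\zeta}g_+(\zeta)\,\rmd\zeta$ with $g_+\in L^2(0,\infty)$. Throughout, $x$ is held fixed, so every quantity is regarded as a function of the spectral parameter alone, evaluated at real $z=s$.

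First I would expand $\exp(\rmi z\zeta\sigma_3)=\left(\begin{array}[c]{cc}\rme^{\rmi z\zeta}&0\\0&\rme^{-\rmi z\zeta}\end{array}\right)$ and multiply it into $\Gamma_+(x,\zeta)$. Right multiplication by this diagonal matrix attaches the factor $\rme^{\rmi z\zeta}$ to the first column. Taking the first column of~\eref{eq.m+.rep} and subtracting $\left(\begin{array}[c]{c}1\\0\end{array}\right)$ therefore gives
\[
    m_1^+(x,z)-\left(\begin{array}[c]{c}1\\0\end{array}\right)
        =\int_0^\infty \rme^{\rmi z\zeta}
            \left(\begin{array}[c]{c}\Gamma_{11}^+(x,\zeta)\\\Gamma_{21}^+(x,\zeta)\end{array}\right)\rmd\zeta.
\]
Since $\Gamma_{11}^+(x,\,\cdot\,),\Gamma_{21}^+(x,\,\cdot\,)\in X_0^+\subset L^2(0,\infty)$ by Proposition~\ref{prop.Gamma.rep}, each entry is exactly of the required $H^2_+$ form, so $m_1^+(x,\,\cdot\,)-(1,0)^{\mathrm T}\in H^2_+(\mathbb R,\mathbb C^2)$.

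For the second vector I would take the second column of~\eref{eq.m-.rep}, which now carries the factor $\rme^{-\rmi z\zeta}$, and obtain
\[
    m_2^-(x,z)-\left(\begin{array}[c]{c}0\\1\end{array}\right)
        =\int_{-\infty}^0 \rme^{-\rmi z\zeta}
            \left(\begin{array}[c]{c}\overline{\Gamma_{21}^-(x,\zeta)}\\\overline{\Gamma_{11}^-(x,\zeta)}\end{array}\right)\rmd\zeta.
\]
The change of variable $\zeta\mapsto-\zeta$ converts this into $\int_0^\infty \rme^{\rmi z\zeta}g(\zeta)\,\rmd\zeta$, where $g$ is the reflection of the conjugated pair $(\overline{\Gamma_{21}^-},\overline{\Gamma_{11}^-})^{\mathrm T}$. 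Because $\Gamma_{11}^-(x,\,\cdot\,),\Gamma_{21}^-(x,\,\cdot\,)\in X_0^-\subset L^2(-\infty,0)$, reflection and complex conjugation keep $g$ in $L^2(0,\infty)$, so the second vector also lies in $H^2_+(\mathbb R,\mathbb C^2)$.

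There is no genuine analytic difficulty here; this is an immediate corollary of Proposition~\ref{prop.Gamma.rep}. The only point requiring care is the bookkeeping of signs: one must track which entry of $\exp(\rmi z\zeta\sigma_3)$ supplies $\rme^{\rmi z\zeta}$ versus $\rme^{-\rmi z\zeta}$, and then perform the reflection $\zeta\mapsto-\zeta$ for the $\mathrm{M}_-$ representation so that the half-line of integration and the sign in the exponent align with the definition of $H^2_+$.
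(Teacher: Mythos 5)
Your proof is correct and is exactly the argument the paper intends: the paper presents this proposition as an immediate corollary of Proposition~\ref{prop.Gamma.rep}, and your column-by-column reading of the representations~(\ref{eq.m+.rep})--(\ref{eq.m-.rep}), including the sign bookkeeping in $\exp(\rmi z\zeta\sigma_3)$ and the reflection $\zeta\mapsto-\zeta$ for the $\mathrm{M}_-$ case, simply makes the omitted details explicit.
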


These Hardy space properties are the starting point for the derivation of the
Gelfand--Levitan--Marchenko equations.

\subsection{The\ Gelfand--Levitan--Marchenko equations}

We now derive integral equations for $\Gamma_{\pm}$ in terms of $F_{\pm}$ and
also derive a reconstruction formula for $u$ from $\Gamma_{\pm}$. In the next
section we will solve these equations and prove consistency of the two
reconstructions. Given functions~$F_\pm$ of~(\ref{eq.F+})--(\ref{eq.F-}), let us define%
\begin{eqnarray*}
    \Omega_{-}(x)  &  :=\left(\begin{array}[c]{cc}%
        0 & \overline{F_{-}(x)}\\
        F_{-}(x) & 0
            \end{array}\right), \label{eq.Omega-}\\
    \Omega_{+}(x)  &  :=\left(\begin{array}[c]{cc}%
        0 & F_{+}(x)\\
        \overline{F_{+}(x)} & 0
            \end{array}\right). \label{eq.Omega+}%
\end{eqnarray*}
We will prove:

\begin{proposition}
Suppose that $u\in X$; then the kernels
$\Gamma_{+}$ and $\Gamma_{-}$ obey the equations%
\begin{eqnarray}\fl
     \Gamma_{-}(x,\zeta)+\Omega_{-}(x+\zeta)
        +\int_{-\infty}^{0}\Gamma_{-}(x,t)\Omega_{-}(x+t+\zeta)\,\rmd t &=0
        & \quad  {\it for\ a.e.} \quad \zeta <0,\label{eq.GLM-} \\
\fl  \Gamma_{+}(x,\zeta)+\Omega_{+}(x+\zeta)
        +\int_{0}^{\infty} \Gamma_{+}(x,t)\Omega_{+}(x+t+\zeta)\,\rmd t &=0
        &\quad  {\it for\ a.e.} \quad  \zeta>0. \label{eq.GLM+}%
\end{eqnarray}

\end{proposition}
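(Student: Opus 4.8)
The plan is to extract the two Gelfand--Levitan--Marchenko equations from the scattering relation $\Psi_+=\Psi_-\mathrm A$, rewritten in terms of the matrices $\mathrm M_\pm$. Since $\Psi_\pm=\mathrm M_\pm\exp(\rmi sx\sigma)$, this relation reads $\mathrm M_+(x,s)=\mathrm M_-(x,s)\,\widetilde{\mathrm A}(x,s)$ with $\widetilde{\mathrm A}:=\exp(\rmi sx\sigma)\,\mathrm A\,\exp(-\rmi sx\sigma)$, and a direct computation gives
\[
    \widetilde{\mathrm A}(x,s)=\left(\begin{array}[c]{cc} a & \rme^{\rmi sx}\overline b\\ \rme^{-\rmi sx}b & \overline a\end{array}\right),
    \qquad
    \widetilde{\mathrm A}^{-1}=\left(\begin{array}[c]{cc}\overline a & -\rme^{\rmi sx}\overline b\\ -\rme^{-\rmi sx}b & a\end{array}\right),
\]
the second equality using $|a|^2-|b|^2=1$ from~\eref{eq.a-b}. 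Writing $\mathrm M_\pm=(m_1^\pm,m_2^\pm)$ in columns, reading off the first column of $\mathrm M_+=\mathrm M_-\widetilde{\mathrm A}$ and the second column of $\mathrm M_-=\mathrm M_+\widetilde{\mathrm A}^{-1}$, dividing by $a$, and recalling $r_-=b/a$ and $r_+=-\overline b/a$, I would obtain the two identities
\[
    \frac{m_1^+}{a}=m_1^-+\rme^{-\rmi sx}r_-\,m_2^-,
    \qquad
    \frac{m_2^-}{a}=m_2^++\rme^{\rmi sx}r_+\,m_1^+,
\]
valid for $s\in\mathbb R$.

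Next I would exploit the Hardy-space structure established in Proposition~\ref{prop.Hardy.m} and Remark~\ref{rem.a}. The columns $m_1^+$ and $m_2^-$, minus their constant parts $(1,0)^{\mathrm T}$ and $(0,1)^{\mathrm T}$, lie in $H_+^2$, and since $1/a-1\in\widehat X$ is also of ``$+$ type'', the combinations $m_1^+/a$ and $m_2^-/a$ differ from their constant columns by functions whose inverse Fourier transform is supported on $(0,\infty)$. Dually, $m_1^-$ and $m_2^+$ differ from their constant columns by ``$-$ type'' functions supported on $(-\infty,0)$. Applying $\mathcal F_-$ to the first identity and evaluating at $\zeta<0$ therefore annihilates the ``$+$ type'' left-hand side, while $\mathcal F_-\bigl(m_1^--(1,0)^{\mathrm T}\bigr)$ reproduces the first column $(\Gamma_{11}^-,\Gamma_{21}^-)^{\mathrm T}(x,\zeta)$ of $\Gamma_-$ via the representation~\eref{eq.m-.rep}; symmetrically, applying $\mathcal F_+$ to the second identity and evaluating at $\zeta>0$ leaves the second column of $\Gamma_+$ coming from $\mathcal F_+\bigl(m_2^+-(0,1)^{\mathrm T}\bigr)$.

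It then remains to compute the transform of the surviving term, say $\rme^{-\rmi sx}r_-\,m_2^-$ in the ``$-$'' case. Using $r_-=\mathcal F_-^{-1}F_-$, so that $\rme^{-\rmi sx}r_-(s)=\int \rme^{\rmi s\eta}F_-(x+\eta)\,\rmd\eta$, together with~\eref{eq.m-.rep} for $m_2^-$, the constant column $(0,1)^{\mathrm T}$ of $m_2^-$ contributes the inhomogeneous term $\Omega_-(x+\zeta)$, and the $\Gamma_-$ part contributes the convolution $\int_{-\infty}^0\Gamma_-(x,t)\,\Omega_-(x+t+\zeta)\,\rmd t$, yielding precisely the first column of~\eref{eq.GLM-}. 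The remaining column follows from the $SU(1,1)$ symmetries~\eref{eq.m12.sym}--\eref{eq.m22.sym}, which force $\Gamma_-$ and $\Omega_-$ into the stated form, and equation~\eref{eq.GLM+} is obtained by the symmetric argument applied to the second identity. The main obstacle is to make the formal convolution step rigorous: one must justify the interchange of integrations and the half-line support statements not by pointwise $\delta$-function manipulations but through Parseval's identity, the $L^1\cap L^2$ membership of all kernels (Proposition~\ref{prop.Gamma.rep}), and the orthogonal decomposition $L^2(\mathbb R)=H_+^2\oplus H_-^2$ together with the projections $\mathcal C_\pm$.
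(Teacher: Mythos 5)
Your proposal is correct and follows essentially the same route as the paper's proof: both start from the scattering relation $\Psi_{+}=\Psi_{-}\mathrm{A}$ written in terms of $\mathrm{M}_{\pm}$, divide by $a$, invoke the Hardy-space/support properties of Proposition~\ref{prop.Hardy.m} and Remark~\ref{rem.a} together with the representations of Proposition~\ref{prop.Gamma.rep}, and then read off the Fourier coefficients on the complementary half-line to obtain the convolution equations. The only difference is bookkeeping: the paper extracts both columns of \eref{eq.GLM-} from the two column identities of $\mathrm{M}_{+}=\mathrm{M}_{-}\widetilde{\mathrm{A}}$ (treating \eref{eq.GLM+} "similarly"), whereas you derive one column of each equation directly (using $r_{-}$ for the left equation and $r_{+}$, via $\widetilde{\mathrm{A}}^{-1}$, for the right one) and complete the matrix equations through the conjugation symmetry of $\Gamma_{\pm}$ and $\Omega_{\pm}$, which is an equivalent reorganization of the same argument.
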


The equations (\ref{eq.GLM-}) and (\ref{eq.GLM+}) are the left and right
Gelfand--Levitan--Marchenko equations.

\begin{proof}
We will derive (\ref{eq.GLM-}); the derivation of (\ref{eq.GLM+}) is similar
and will be omitted. We begin with the identity%
\[
    \Psi_{+}(x,z)=\Psi_{-}(x,z)\
        \left(\begin{array}[c]{cc}%
            a(z) & \overline{b}(z)\\
            b(z) & \overline{a}(z)
        \end{array}\right),
\]
for a fixed~$x$ and real~$z$, which implies that%
\begin{eqnarray}
\frac{m_{1}^{+}}{a}
    &=m_{1}^{-}+r_{-}\rme^{-\rmi zx}m_{2}^{-},
        \label{eq.sc-.h2+}\\
\frac{m_{2}^{+}}{\overline{a}}
    &=m_{2}^{-}+\overline{r_{-}}\,\rme^{\rmi zx}m_{1}^{-}.
        \label{eq.sc-.h2-}%
\end{eqnarray}
From Proposition \ref{prop.Hardy.m} and Remark \ref{rem.a}, the left-hand side
of (\ref{eq.sc-.h2+}) satisfies%
\[
\frac{m_{1}^{+}}{a}-\left(
\begin{array}
[c]{c}%
1\\
0
\end{array}
\right)  \in H_{+}^{2}(\mathbb{R},\mathbb{C}^{2})
\]
for each $x$. On the other hand,
it follows from Proposition~\ref{prop.Gamma.rep} and (\ref{eq.F-}) that
\[
        m_{1}^{-}+r_{-}\rme^{-\rmi zx}m_{2}^{-}
            -\left(\begin{array}[c]{c}1\\0 \end{array}\right)
\]
is represented as%
\begin{eqnarray}\label{eq.sc-.h2+.rhs.rep}
    \eqalign{ \int_{-\infty}^{\infty}\Biggl[ \Gamma_{1}^{-}(x,\zeta)
        &  + F_{-}(x+\zeta) \left(\begin{array}[c]{c}0\\1\end{array}\right)\Biggr] \rme^{\rmi z\zeta}\,\rmd\zeta \\
        &  + \int_{-\infty}^{\infty} \left[
            \int_{-\infty}^{0}\Gamma_{2}^{-}(x,t)F_{-}(x+t+\zeta)\,\rmd t\right]  \rme^{\rmi z\zeta}\,\rmd\zeta,}
\end{eqnarray}
where we regard the columns~$\Gamma_{1}^{-}(x,\,\cdot\,)$ and~$\Gamma_{2}^{-}(x,\,\cdot\,)$ of~$\Gamma_-(x,\,\cdot\,)$ as
vector-valued functions on the line which vanish for $\zeta>0$. Since~(\ref{eq.sc-.h2+.rhs.rep}) should give a function in~$H_+^2(\mathbb R,\mathbb C^2)$, we get
\begin{equation}\label{eq.GLM-.col1}%
    \Gamma_{1}^{-}(x,\zeta) + F_{-}(x+\zeta)
        \left(\begin{array}[c]{c}0\\1\end{array}\right)  + \int_{-\infty}^{0}\Gamma_{2}^{-}(x,t)F_{-}(x+t+\zeta)\rmd t=0
\end{equation}
for almost every $\zeta<0$. A similar calculation with~(\ref{eq.sc-.h2-})
shows that%
\begin{equation}\label{eq.GLM-.col2}%
    \Gamma_{2}^{-}(x,\zeta) + \overline{F_{-}(x+\zeta)}
            \left(\begin{array}[c]{c} 1\\ 0 \end{array}\right)  +\int_{-\infty}^{0}\Gamma_{1}^{-}(x,t)
                \overline{F_{-}(x+t+\zeta)}\,\rmd t=0
\end{equation}
for almost every $\zeta<0$. \ The equations (\ref{eq.GLM-.col1}) and
(\ref{eq.GLM-.col2}) together imply (\ref{eq.GLM-}).
\end{proof}

Finally we recall the formulas for reconstructing $u$ from $\Gamma_{\pm}$,
which play a crucial role in the inverse theory.

\begin{proposition}
Suppose that $u\in S(\mathbb{R})$. Then%
\begin{equation}
u(x)=-\Gamma_{12}^{+}(x,0) \label{eq.u.recon.+}%
\end{equation}
and
\begin{equation}
u(x)=\Gamma_{12}^{-}(x,0). \label{eq.u.recon.-}%
\end{equation}
\end{proposition}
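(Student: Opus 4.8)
The plan is to feed the triangular representations~(\ref{eq.m+.rep}) and~(\ref{eq.m-.rep}) for $\mathrm{M}_\pm$ into the differential equation~(\ref{eq.m}), namely $\mathrm{M}'_x = \rmi z\,\ad_\sigma(\mathrm{M}) + \mathrm{Q}(x)\mathrm{M}$, and then to read off the reconstruction formulas by isolating the terms that do \emph{not} sit under a Fourier-type integral in the spectral variable~$z$. Throughout I use the hypothesis $u\in S(\mathbb{R})$: by the explicit multilinear (Volterra) expressions for the kernels obtained in the proof of Proposition~\ref{prop.Gamma.rep}, the maps $\zeta\mapsto\Gamma_\pm(x,\zeta)$ are then smooth and rapidly decreasing and depend smoothly on~$x$, which is what legitimizes differentiating under the integral sign in~$x$ and integrating by parts in~$\zeta$.

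Consider first $\mathrm{M}_+(x,z)=\mathrm{I}+\int_0^\infty\Gamma_+(x,\zeta)\exp(\rmi z\zeta\sigma_3)\,\rmd\zeta$. Since $\sigma$ is diagonal, it commutes with $\exp(\rmi z\zeta\sigma_3)$, so $\ad_\sigma(\mathrm{M}_+)=\int_0^\infty\ad_\sigma\bigl(\Gamma_+(x,\zeta)\bigr)\exp(\rmi z\zeta\sigma_3)\,\rmd\zeta$, where $\ad_\sigma(\Gamma_+)$ is purely off-diagonal. The essential step is the identity $\partial_\zeta\exp(\rmi z\zeta\sigma_3)=\rmi z\,\sigma_3\exp(\rmi z\zeta\sigma_3)$, which lets me trade the factor~$\rmi z$ in the term $\rmi z\,\ad_\sigma(\mathrm{M}_+)$ for a $\zeta$-derivative of the exponential and then integrate by parts in~$\zeta$. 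This produces a boundary contribution at $\zeta=0$ equal, entrywise, to the off-diagonal matrix with entries $\Gamma_{12}^+(x,0)$ and $\Gamma_{21}^+(x,0)$, while the contribution at $\zeta=\infty$ vanishes because $\Gamma_+(x,\cdot)\in X_0^+$. Every remaining term — the integral $\int_0^\infty\partial_x\Gamma_+\,\exp(\rmi z\zeta\sigma_3)\,\rmd\zeta$, the integral left over from integrating by parts, and the integral coming from $\mathrm{Q}(x)\int_0^\infty\Gamma_+\,\exp(\rmi z\zeta\sigma_3)\,\rmd\zeta$ — is the Fourier transform of an $L^1(\rmd\zeta)$ function and hence tends to~$0$ as $|z|\to\infty$ along the real axis by the Riemann--Lebesgue lemma.

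Collecting the $z$-independent pieces and letting $|z|\to\infty$, the only surviving part of $\partial_x\mathrm{M}_+=\rmi z\,\ad_\sigma(\mathrm{M}_+)+\mathrm{Q}(x)\mathrm{M}_+$ is the balance between the boundary matrix and the non-integral term $\mathrm{Q}(x)$ coming from $\mathrm{Q}(x)\mathrm{I}$, giving
\[
    \left(\begin{array}[c]{cc}
        0 & \Gamma_{12}^+(x,0)+u(x)\\
        \Gamma_{21}^+(x,0)+\overline{u}(x) & 0
    \end{array}\right)=0 .
\]
The $(1,2)$ entry yields $u(x)=-\Gamma_{12}^+(x,0)$, which is~(\ref{eq.u.recon.+}), and the $(2,1)$ entry is the conjugate statement, consistent with the symmetry $\Gamma_{12}^+=\overline{\Gamma_{21}^+}$ from Proposition~\ref{prop.Gamma.rep}. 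For $\mathrm{M}_-$ I would repeat the identical computation with $\int_{-\infty}^0$ in place of $\int_0^\infty$; the integration by parts now deposits the boundary term at the \emph{upper} endpoint $\zeta=0$ with the opposite sign (the endpoint $\zeta=-\infty$ again contributing nothing), so the $(1,2)$ balance becomes $-\Gamma_{12}^-(x,0)+u(x)=0$, which is exactly~(\ref{eq.u.recon.-}).

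The matrix bookkeeping and the Riemann--Lebesgue vanishing are routine. The one point that genuinely relies on $u\in S(\mathbb{R})$, and which I regard as the main obstacle to make rigorous, is the justification of the two analytic manipulations: differentiation under the integral in~$x$, and the integration by parts in~$\zeta$ with vanishing boundary term at $\pm\infty$. Concretely one must know that $\Gamma_\pm(x,\cdot)$ is absolutely continuous in~$\zeta$ with integrable $\zeta$-derivative and is jointly regular in $(x,\zeta)$; for Schwartz~$u$ this is read off from the smoothness and rapid decay of the multilinear kernels established in Proposition~\ref{prop.Gamma.rep}, so no new estimate is required beyond what is already available.
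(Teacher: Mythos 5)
Your proof is correct and is essentially the paper's own argument: both substitute the triangular representations (\ref{eq.m+.rep})--(\ref{eq.m-.rep}) into (\ref{eq.m}), trade the factor $\rmi z$ for a $\zeta$-derivative and integrate by parts, and identify the $z$-independent terms in the large-$z$ limit, with the boundary term at $\zeta=0$ balancing $\mathrm{Q}(x)$ coming from $\mathrm{Q}(x)\mathrm{I}$. The only organizational difference is that the paper packages the integration by parts as a differentiable asymptotic expansion $\mathrm{M}_{\pm}(x,z)\sim\mathrm{I}+\mathrm{M}_{\pm,1}(x)/z+\Or(1/z^{2})$ with $\mathrm{M}_{\pm,1}(x)=\pm\rmi\Gamma_{\pm}(x,0)\sigma_{3}$ and then matches powers of $z$, whereas you keep the computation inside the differential equation and invoke the Riemann--Lebesgue lemma; the resulting identity $\mathrm{Q}(x)=-\rmi\,\ad_\sigma(\mathrm{M}_{\pm,1}(x))$ is the same.
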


\begin{proof}
If $u\in S(\mathbb{R})$, it is easy to see that the kernels
$\Gamma_{\pm}(x,\zeta)$ are smooth functions of $x$ and $\zeta$, and that the
derivatives are integrable in $\zeta$. Using the integral representations for
$\mathrm{M}_{\pm}$ in terms of $\Gamma_{\pm}$ together with the integration by parts
formula%
\[
    \int_{0}^{\infty} f(\zeta)\exp(\rmi z\zeta\sigma_{3}) \rmd\zeta
        = \sum_{j=1}^{k} \left( \frac{-1}{iz}\sigma_{3}\right)^{j}
            f^{(j-1)}(0)+ \Or \left( \frac{1}{z^{k+1}}\right)  ,
\]
true for $f\in C^{k+1}([0,\infty);\mathrm{M}_{2}(\mathbb{C}))$ with
$\int\left\vert f^{(j)}(\zeta)\right\vert \rmd\zeta<\infty$ for $0\leq j\leq
k+1$, it is easy to see from the integral representations (\ref{eq.m+.rep})--(\ref{eq.m-.rep}) that $\mathrm{M}_{+}$ and $\mathrm{M}_{-}$ have differentiable asymptotic expansions of the form%
\begin{equation}
    \mathrm{M}_{\pm}(x,z) \sim \mathrm{I} + \frac{\mathrm{M}_{\pm,1}(x)}{z} +
        \Or\left(  \frac{1}{z^{2}}\right)  \label{eq.M.asym}%
\end{equation}
for $u\in S(\mathbb{R})$, where%
\[
    \mathrm{M}_{\pm,1}(x)=\pm\rmi\Gamma_{\pm}(x,0)\sigma_{3}.
\]
Substituting~(\ref{eq.M.asym}) into~(\ref{eq.m}), we conclude that%
\begin{eqnarray*}
    \mathrm{Q}(x)&  =-\rmi\,\ad_\sigma (\mathrm{M}_{\pm,1}(x))\\
        &  =\pm\case12\left[ \sigma_{3}\Gamma_{\pm}(x,0)\sigma_{3}
                -\Gamma_{\pm}(x,0)\right].
\end{eqnarray*}
The formulas (\ref{eq.u.recon.+}) and (\ref{eq.u.recon.-}) are an immediate consequence.
\end{proof}

\section{Inverse Scattering}

\label{sec.inverse}

We now suppose given $\left(  F_{+},F_{-}\right)  \in X_{1}\times X_{1}$ with
$F_{-}=\mathcal{F_-}(\mathcal{I}\widehat{F}_{+})$ and show that the equations~(\ref{eq.GLM-}) and (\ref{eq.GLM+}) admit unique solutions~$\Gamma_\pm$ depending continuously on $F_{\pm}\in X_{1}$. We then argue by continuity and density that the reconstruction formulas (\ref{eq.u.recon.+}) and (\ref{eq.u.recon.-}), known from the classical theory for $F_\pm$ in the Schwartz class $S(\mathbb R)$, continue to hold also for $F_\pm \in X_1$. We remark that equations~(\ref{eq.GLM-}) and (\ref{eq.GLM+}) can also be analyzed using the factorization theory in operator algebras as developed e.g. in~\cite[Ch.~IV]{GK:1970}; see also~\cite{Mykytyuk:2003,Mykytyuk:2004} for some extensions and~\cite{AHM:2005} for an example of application in inverse problems for Dirac operators in the AKNS form.

We give the details for recovering $u$ from $F_{+}$ since the recovery of
$u$ from $F_{-}$ is closely analogous. By (\ref{eq.u.recon.+}), it will
suffice to solve~\eref{eq.GLM+} for $\Gamma_{12}^{+}$ and show that the solution is
sufficiently regular so that $u(x)=-\Gamma_{12}^{+}(x,0)$ is well-defined and
gives a continuous map from~$X_{1}$ to~$X_c^+$ for any $c\in\mathbb{R}$.

First we prove the existence of a unique solution. From (\ref{eq.GLM+}) we
have (writing $F$ for $F_{+}$)%
\begin{eqnarray*}
    F(x+\zeta)+ &\Gamma_{12}^{+}(x,\zeta)
        +\int_{0}^{\infty}\Gamma_{11}^{+}(x,t)F(x+t+\zeta)\,\rmd t   =0,\\
    &\Gamma_{11}^{+}(x,\zeta)
        +\int_{0}^{\infty}\Gamma_{12}^{+}(x,t)
                            \overline {F}(x+t+\zeta)\,\rmd t   =0.
\end{eqnarray*}
Iterating these equations and writing $\Gamma$ for $\Gamma_{12}^{+}$, we see
that
\begin{equation}\label{eq.Gamma}%
\fl
    \Gamma(x,\zeta) + F(x+\zeta)
        -\int_{0}^{\infty}\int_{0}^{\infty}\Gamma(x,t_{2})
            \overline{F}(x+t_{2}+t_{1})F(x+t_{1}+\zeta)\,
                \rmd t_{2}\,\rmd t_{1} =0.
\end{equation}
To solve this equation, we consider the scalar operators $T_{F}(x)$ and
$T_{\overline{F}}(x)$ on $L^{1}(\mathbb{R}^{+})$ given by%
\begin{eqnarray*}
    \left[ T_{F}(x)\psi\right](\zeta)
        & =\int_{0}^{\infty}\psi(t)F(x +t+\zeta)\,\rmd t,\\
    \left[ T_{\overline{F}}(x)\psi\right](\zeta)
        & =\int_{0}^{\infty}\psi(t)\overline{F}(x+t+\zeta)\,\rmd t
\end{eqnarray*}
and regard~(\ref{eq.Gamma}) as an equation in $L^{1}(\mathbb{R}^{+})$
for each fixed $x$:
\begin{equation}\label{eq.Gamma.L1}%
    \Gamma(x,\,\cdot\,) + F(x+\,\cdot\,)
        - \left(T_{F}(x)\circ T_{\overline{F}}(x)\right)
            \Gamma(x,\,\cdot\,)=0.
\end{equation}
The operators $T_{F}(x)$ and $T_{\overline{F}}(x)$ satisfy the estimates%
\begin{equation}\label{eq.TF.L2}%
    \eqalign{
    \| T_{F}(x)\|_{L^{1} \rightarrow L^{1}},\,
    \| T_{\overline{F}}(x)\|_{L^{1}\rightarrow L^{1}}
        &  \leq \|F\|_{1},\\ 
    \| T_{F}(x)\|_{L^{1}\rightarrow L^{2}},\,
    \|T_{\overline{F}}(x)\|_{L^{1}\rightarrow L^{2}}
        &  \leq\|F\|_{X},\\ 
    \|T_{F}(x)\|_{L^{2}\rightarrow L^{2}},\,
    \|T_{\overline{F}}(x)\|_{L^{2}\rightarrow L^{2}}
        &  \leq\|r\|_{\infty}<1,
        }
\end{equation}
where $r=\widehat{F}$. Moreover, the map
\[
    F\mapsto T_{F}%
\]
is a continuous map from $X$ into the bounded operators from $L^{1}(\mathbb{R}^+)$ to itself, from $L^{1}(\mathbb{R}^+)$ to $L^{2}(\mathbb{R}^+)$, and from $L^{2}(\mathbb{R}^+)$ to itself. Since $T_{F}$ is
compact as an operator from $L^{p}(\mathbb{R}^+)$ to itself for $F\in C_{0}^{\infty}(\mathbb{R})$ and $p=1,2$, it follows by density that the same is
true for any $F\in X$.

The $L^{2}$ estimates imply that the operator%
\[
    \bigl(I-T_{F}(x)\circ T_{\overline{F}}(x)\bigr)^{-1}%
\]
exists as a bounded operator from $L^{2}(\mathbb{R}^{+})$ to itself given by a
convergent Neumann series. In particular, $\ker_{L^{2}(\mathbb{R}^{+})}\left(
I-T_{F}(x)\circ T_{\overline{F}}(x)\right)  $ is trivial. On the other hand,
any solution $\psi\in L^{1}(\mathbb{R}^{+})$ of the equation
\[
   \bigl(  I-T_{F}(x)\circ T_{\overline{F}}(x)\bigr)  \psi=0
\]
actually belongs to $L^{2}(\mathbb{R}^{+})$ since $T_{F}(x)\circ T_{\overline{F}}(x)$ maps $L^{1}(\mathbb{R}^+)$ into $L^{2}(\mathbb{R}^+)$. It follows that $\ker
_{L^{1}(\mathbb{R}^{+})}\left(  I-T_{F}(x)\circ T_{\overline{F}}(x)\right)  $
is also trivial. Since $T_{F}$ and $T_{\overline{F}}$ are compact, we conclude
that $\left(  I-T_{F}(x)\circ T_{\overline{F}}(x)\right)  ^{-1}$ also exists
as a bounded operator from $L^{1}(\mathbb{R}^+)$ to $L^{1}(\mathbb{R}^+)$.

The solution to~(\ref{eq.Gamma.L1}) is given by%
\begin{eqnarray}\label{eq.Gammax}
    \Gamma(x,\,\cdot\,)
         = - \left(  I-T_{F}(x)\circ T_{\overline{F}}(x)\right)^{-1}F_x
         =: -F_x - G(x,\,\cdot\,),
\end{eqnarray}
where~$F_x(\,\cdot\,):=F(x + \,\cdot\,)$ and
\[
    G(x,\,\cdot\,) :=
        T_{F}(x)\left(  I-T_{\overline{F}}(x)\circ T_{F}(x)\right)^{-1}
        \left[T_{\overline{F}}(x)F_x\right].
\]
The right-hand side of (\ref{eq.Gammax}) defines for each $x\in\mathbb{R}$ a
function $\Gamma(x,\,\cdot\,)\in X_0^+$.

To study continuity in $x$ and $\zeta$, set
\begin{equation}
    H(x,\,\cdot\,):= \left(I-T_{\overline{F}}(x)\circ T_{F}(x)\right)^{-1}
        \left[T_{\overline{F}}(x)F_x\right]. \label{eq.H}%
\end{equation}
The estimate%
\[
    \| H(x,\,\cdot\,)\|_{L^{2}(\mathbb{R}^{+})}
        \leq \bigl( 1-\| r\| _{\infty}^{2}\bigr)^{-1}
            \|r\|_{\infty} \|F\|_{2}%
\]
holds, and $x\mapsto H(x,\,\cdot\,)$ is a continuous mapping from $\mathbb{R}$
into $L^{2}(\mathbb{R}^{+})$, as follows from the continuity of~$T_{F}(x)$ as an
operator-valued function of $x$, the continuity of $x\mapsto F_x$ as
a mapping from~$\mathbb R$ into~$L^2(\mathbb{R}^+)$, and the uniform bounds on the resolvent. From the formula%
\[
    G(x,\zeta)=\int_{0}^{\infty}H(x,t)F(x+t+\zeta)\,\rmd t,
\]
the Schwarz inequality, and the continuity of $H(x,\,\cdot\,)$ as a mapping into
$L^{2}(\mathbb{R}^{+})$, we deduce that $G(x,\zeta)$ is jointly continuous in
$x$ and $\zeta$ and is uniformly bounded. Thus:

\begin{lemma}
Suppose that $F\in X_{1}$. For each $x\in\mathbb{R}$, there exists a unique
so\-lu\-tion~$\Gamma(x,\,\cdot\,)$ of~(\ref{eq.Gamma}) belonging to~$X_0^+$. Moreover,%
\[
    \Gamma(x,\zeta)= -F(x+\zeta) - G(x,\zeta),
\]
where~$G$ is a bounded, jointly continuous function of $x$ and $\zeta$.
\end{lemma}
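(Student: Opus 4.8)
The plan is to assemble the facts already established for the operators $T_{F}(x)$ and $T_{\overline{F}}(x)$ into the three claims of the lemma: existence, uniqueness in $X_0^+$, and the structural decomposition $\Gamma(x,\zeta)=-F(x+\zeta)-G(x,\zeta)$ with $G$ bounded and jointly continuous. I would regard \eref{eq.Gamma} in the equivalent form \eref{eq.Gamma.L1}, namely the operator equation $\bigl(I-T_{F}(x)\circ T_{\overline{F}}(x)\bigr)\Gamma(x,\,\cdot\,)=-F_x$ posed on $L^{1}(\mathbb{R}^{+})$, and solve it by inverting $I-T_{F}(x)\circ T_{\overline{F}}(x)$.

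First I would treat invertibility. The $L^{2}\to L^{2}$ bounds in \eref{eq.TF.L2} give $\|T_{F}(x)\circ T_{\overline{F}}(x)\|_{L^{2}\to L^{2}}\leq\|r\|_{\infty}^{2}<1$, so a Neumann series furnishes a bounded inverse on $L^{2}(\mathbb{R}^{+})$ and, in particular, shows the $L^{2}$-kernel is trivial. To pass to $L^{1}$, I would note that any $\psi\in L^{1}(\mathbb{R}^{+})$ with $\bigl(I-T_{F}\circ T_{\overline{F}}\bigr)\psi=0$ satisfies $\psi=(T_{F}\circ T_{\overline{F}})\psi$, whose right-hand side lies in $L^{2}(\mathbb{R}^{+})$ because $T_{F}$ maps $L^{1}$ into $L^{2}$; hence $\psi\in L^{2}$ and the $L^{1}$-kernel is trivial as well. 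Since $T_{F}$ is compact on $L^{1}(\mathbb{R}^{+})$ (by density from $C_0^\infty$), so is the composition, and the Fredholm alternative upgrades triviality of the kernel to invertibility on $L^{1}(\mathbb{R}^{+})$. Because $F_x\in X_0^+\subset L^{1}(\mathbb{R}^{+})$, this yields a unique $\Gamma(x,\,\cdot\,)\in L^{1}(\mathbb{R}^{+})$; applying instead the $L^{2}$-inverse to $F_x\in L^{2}(\mathbb{R}^{+})$ produces the same function, so in fact $\Gamma(x,\,\cdot\,)\in L^{1}\cap L^{2}=X_0^+$.

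Next I would extract the decomposition via the push-through identity $\bigl(I-T_{F}\circ T_{\overline{F}}\bigr)^{-1}=I+T_{F}\bigl(I-T_{\overline{F}}\circ T_{F}\bigr)^{-1}T_{\overline{F}}$, which rewrites $\Gamma(x,\,\cdot\,)=-\bigl(I-T_{F}\circ T_{\overline{F}}\bigr)^{-1}F_x$ of \eref{eq.Gammax} as $-F_x-G(x,\,\cdot\,)$, where $G(x,\,\cdot\,)=T_{F}(x)H(x,\,\cdot\,)$ and $H$ is as in \eref{eq.H}. The bound $\|H(x,\,\cdot\,)\|_{2}\leq(1-\|r\|_{\infty}^{2})^{-1}\|r\|_{\infty}\|F\|_{2}$ then follows from the Neumann estimate on $\bigl(I-T_{\overline{F}}\circ T_{F}\bigr)^{-1}$ together with $\|T_{\overline{F}}(x)F_x\|_{2}\leq\|r\|_{\infty}\|F\|_{2}$. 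Writing $G(x,\zeta)=\int_{0}^{\infty}H(x,t)F(x+t+\zeta)\,\rmd t$ and applying the Schwarz inequality gives the uniform bound $|G(x,\zeta)|\leq\|H(x,\,\cdot\,)\|_{2}\|F\|_{2}$; joint continuity of $G$ in $(x,\zeta)$ then reduces, again by Schwarz, to continuity of $x\mapsto H(x,\,\cdot\,)$ into $L^{2}(\mathbb{R}^{+})$ and to the $L^{2}$-continuity of the translations $(x,\zeta)\mapsto F(x+\,\cdot\,+\zeta)$.

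The step I expect to be the main obstacle is the transfer from $L^{2}$ to $L^{1}$: the contraction estimate is available only on $L^{2}$, so invertibility of $I-T_{F}\circ T_{\overline{F}}$ on $L^{1}(\mathbb{R}^{+})$ cannot come from a Neumann series and must instead be obtained from the smoothing property $T_{F}\colon L^{1}\to L^{2}$ (which forces $L^{1}$-kernel elements into $L^{2}$) combined with compactness and the Fredholm alternative. The accompanying bookkeeping — verifying that the $L^{1}$- and $L^{2}$-inverses agree on $F_x\in L^{1}\cap L^{2}$, so that the single function $\Gamma(x,\,\cdot\,)$ genuinely lands in $X_0^+$ — is the delicate point that ties the existence, uniqueness, and regularity assertions together.
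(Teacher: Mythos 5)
Your proposal is correct and follows essentially the same route as the paper: the same reduction to the operator equation on $L^{1}(\mathbb{R}^{+})$, the same Neumann-series inverse on $L^{2}$ combined with the smoothing property $T_{F}\circ T_{\overline{F}}\colon L^{1}\to L^{2}$ and compactness plus the Fredholm alternative to invert on $L^{1}$, and the same decomposition $\Gamma=-F_x-G$ with $G=T_{F}H$, bounded and jointly continuous via the Schwarz inequality. The push-through identity you state explicitly is exactly what the paper uses implicitly in passing from~(\ref{eq.Gammax}) to the definition of~$H$ in~(\ref{eq.H}).
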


We can now obtain a limit of $\Gamma(x,\zeta)$ as $\zeta\to0$ and compute a putative
reconstruction
\begin{equation}
    u_{+}(x):=-\Gamma(x,0) = F(x)+w(x), \label{eq.u+}%
\end{equation}
where%
\begin{equation}
    w(x) = \int_{0}^{\infty}H(x,t)F(x+t)\,\rmd t. \label{eq.w}%
\end{equation}

Recall that $X_{c}^{+}=L^{1}(c,\infty)\cap L^{2}(c,\infty)$. \ We will now
show that, for any fixed~$c\in\mathbb{R}$,  the map
\[
    \mathcal{G}_{+}  \,:\, X_{1} \ni F \mapsto u_{+} \in X_{c}^{+}
\]
defined by~(\ref{eq.u+}) is bounded and continuous. We will also show that
\begin{equation}\label{eq.G+}%
    \mathcal{G}_{+}(F)= F + \sum_{n=1}^{\infty}
            \mathcal{G}_{2n+1}^{+}(F,\overline{F})
\end{equation}
for multilinear maps $\mathcal{G}_{n}^{+}:X_{1}^{n}\rightarrow X_{c}^{+}$.
Indeed, set
\[
    H_{2n}(x,\,\cdot\,)
        := \left(T_{\overline{F}}(x)\circ T_{F}(x)\right)^{n-1}
            \left[T_{\overline{F}}(x)F_x\right]
\]
and%
\[
    \mathcal{G}^+_{2n+1}(F,\overline{F})
        = \int_{0}^{\infty}F(x+t)H_{2n}(x,t)\,\rmd t.
\]
Then the claimed representation follows from (\ref{eq.H}), (\ref{eq.u+}), and
(\ref{eq.w}) provided that the series (\ref{eq.G+}) converges in $X_{c}^{+}$.

To prove convergence, suppose that $F\in X_{1}$ and choose~$\rho<1$ such
that~$\|\widehat F\|_\infty<\rho$. First, from (\ref{eq.TF.L2}) we have,
\[
    \| H_{2n}(x,\,\cdot\,)\|_{2} \leq \rho^{2n-1} \|F\|_{2},
\]
so that by the Schwarz inequality%
\begin{equation}\label{eq.G2n+1.infty}%
    \| \mathcal{G}^+_{2n+1}(F,\overline{F})\| _{\infty}
        \leq \rho^{2n-1}\| F\| _{2}^{2}.
\end{equation}
It follows that $\sum_{n=1}^{\infty}\mathcal{G}_{2n+1}^{+}(F,\overline{F})$
converges in~$L^{\infty}({\mathbb R})$. On the other hand, we have the explicit formula
\begin{eqnarray*}\fl
    \mathcal{G}^+_{2n+1}(F,\overline{F})(x)
        =\int_{\mathbb{R}_{+}^{2n}}F(x+t_1)\overline{F}(x+t_1+t_{2})
            \cdots \overline{F}(x+t_{2n-1}+t_{2n})F(x+t_{2n})\,
            \rmd \mathrm{t},
\end{eqnarray*}
with~$\mathrm{t}:=(t_1,\dots,t_{2n}) \in{\mathbb R}_+^{2n}$.
Fix~$F$ and choose~$x_{0}$ so that%
\begin{equation*}
    \int_{x_{0}}^{\infty} |F(s)|\,\rmd s < \rho. 
\end{equation*}
Note that the same condition holds for some relatively open set of~$F\in X_{1}$.
For any~$F$ in such an open set we then have%
\begin{equation}\label{eq.G2n+1.L1}%
    \| \mathcal{G}^+_{2n+1}\| _{L^{1}(x_{0},\infty)}
        \leq\rho^{2n+1}.
\end{equation}
Thus, combining (\ref{eq.G2n+1.infty}) and (\ref{eq.G2n+1.L1}), we have for
any $c\in\mathbb{R}$ the estimate%
\begin{equation}\label{eq.G2n+1.L1a}
    \| \mathcal{G}^+_{2n+1}\| _{L^{1}(c,\infty)}
        \leq(\max\{x_{0}-c,0\}\|F\|_{2}^{2} + \rho^{2})\rho^{2n-1},
\end{equation}
which shows that $\sum_{n=1}^{\infty}\mathcal{G}_{2n+1}^{+}(F,\overline{F})$
converges in $L^{1}(c,\infty)$ for any fixed $c\in\mathbb{R}$.
It now follows from~(\ref{eq.G2n+1.infty}) and~(\ref{eq.G2n+1.L1a}) that the series~(\ref{eq.G+}) converges in $L^{2}(c,\infty)$, and hence in~$X_{c}^{+}$ as claimed.

We have shown:

\begin{lemma}\label{lemma.GLM+}
For any fixed $c\in\mathbb{R}$, the
mapping~$\mathcal{G}_{+}\,:\,X_{1}\rightarrow X_{c}^{+}$ is continuous and admits the representation~(\ref{eq.G+}). Moreover, $\mathcal{G}_{+}-I$ is a continuous map of~$X_{1}$ into~$C(\mathbb{R})$.
\end{lemma}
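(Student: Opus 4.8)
The preceding computation already supplies most of the analytic content: it establishes the multilinear representation~(\ref{eq.G+}) and the convergence of that series in $X_c^+$ for every fixed $c$. What the lemma adds is \emph{continuity} of $\mathcal{G}_+\colon X_1\to X_c^+$ together with the identification of the range of $\mathcal{G}_+-I$. My plan is to combine two soft facts---that each term $\mathcal{G}_{2n+1}^+$ is a continuous map, and that the series converges locally uniformly---to conclude that the sum is continuous, and then to read off the $C(\mathbb{R})$ statement from the sup-norm bound~(\ref{eq.G2n+1.infty}).

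For continuity I would first note that each $\mathcal{G}_{2n+1}^+$ is continuous as a map of $X_1$ into $X_c^+$: it is built from finitely many of the operators $T_F(x)$ and $T_{\overline F}(x)$, whose dependence on $F$ is continuous into the bounded operators by the estimates~(\ref{eq.TF.L2}), followed by one integration against $F$; equivalently, $\mathcal{G}_{2n+1}^+$ is a bounded $(2n+1)$-linear form in the arguments $(F,\overline F,\dots,F)$. It then suffices to show that the partial sums of~(\ref{eq.G+}) converge uniformly on a neighborhood of an arbitrary $F_0\in X_1$, since a locally uniform limit of continuous maps is continuous. Fixing $F_0$, I would pick $\rho_0$ with $\|\widehat{F_0}\|_\infty<\rho_0<1$ and use the Riemann--Lebesgue bound $\|\widehat F\|_\infty\le\|F\|_1$ to produce a neighborhood $U$ of $F_0$ and a single $\rho<1$ with $\|\widehat F\|_\infty<\rho$ throughout $U$; shrinking $U$ further I can also fix one point $x_0$ with $\int_{x_0}^\infty|F(s)|\,\rmd s<\rho$ for all $F\in U$. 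On $U$ the constants in~(\ref{eq.G2n+1.infty}) and~(\ref{eq.G2n+1.L1a}) are then uniform, the bounds decay like $\rho^{2n-1}$, and the tails of the series are uniformly small, giving the desired uniform convergence and hence continuity of $\mathcal{G}_+$ at $F_0$.

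For the final assertion I would start from $(\mathcal{G}_+-I)(F)=w=\sum_{n\ge1}\mathcal{G}_{2n+1}^+(F,\overline F)$, with $w$ as in~(\ref{eq.u+})--(\ref{eq.w}). The joint continuity of $G$ obtained in the lemma just above shows that $w(x)=G(x,0)$ is a continuous function of $x$, so every partial sum lies in $C(\mathbb{R})$. Since~(\ref{eq.G2n+1.infty}) gives $\|\mathcal{G}_{2n+1}^+(F,\overline F)\|_\infty\le\rho^{2n-1}\|F\|_2^2$, the series converges in the sup norm, so its sum is again continuous; and the same uniform-in-$U$ bound shows that $F\mapsto w$ is continuous from $X_1$ into $C(\mathbb{R})$ with the sup norm. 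This is exactly the statement that $\mathcal{G}_+-I$ maps $X_1$ continuously into $C(\mathbb{R})$.

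The one point that needs genuine care---and which is already carried out in the computation above---is the $L^1(c,\infty)$ estimate~(\ref{eq.G2n+1.L1a}): because $F$ decays only in the $L^1$ sense, the geometric bound is obtained by splitting the integration at the threshold $x_0$ beyond which the $L^1$ tail of $F$ drops below $\rho$, and this produces an $F$-dependent constant. The main thing to verify when assembling continuity is therefore that $\rho$ and $x_0$ may be chosen \emph{locally uniformly} in $F$, which is what legitimizes the uniform-convergence argument; the persistence of the condition $\int_{x_0}^\infty|F|\,\rmd s<\rho$ on a relatively open set of $F$, noted in the text, is precisely what this requires.
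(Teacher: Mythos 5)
Your proposal is correct and takes essentially the same route as the paper: there the lemma is stated as the summary (``We have shown:'') of the preceding series computation, whose decisive point---that $\rho$ and $x_0$ can be chosen uniformly on a relatively open neighbourhood of a given $F\in X_1$, so that the bounds (\ref{eq.G2n+1.infty}) and (\ref{eq.G2n+1.L1a}) are locally uniform and the multilinear series (\ref{eq.G+}) converges locally uniformly in $L^\infty$ and in $X_c^+$---is exactly the local-uniformity argument you spell out. Your handling of the $C(\mathbb{R})$ statement (continuity of $w=G(\cdot,0)$ from the preceding lemma plus sup-norm convergence of the series of terms continuous in $F$) matches the paper's intended argument as well.
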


Similarly, we can study the ``left''  Gelfand--Levitan--Marchenko equation~(\ref{eq.GLM-}) to define a map
$\mathcal{G}_{-}\,:\,F_{-}\mapsto u_{-}$ via the reconstruction formula~(\ref{eq.u.recon.-}). The analogue of~(\ref{eq.Gamma}) (where now~$\Gamma$
denotes~$\Gamma_{12}^{-}$ and~$F$ denotes~$F_{-}$) is
\[ \fl
    \Gamma(x,\zeta)+F(x+\zeta)
        -\int_{-\infty}^{0}\int_{-\infty}^{0}
        \Gamma(x,t_{2})\overline{F}(x+t_{2}+t_{1})F(x+t_{1}+\zeta)
            \,\rmd t_{2}\,\rmd t_{1} = 0,
\]
and we set
\[
    u_{-}(x)=\lim_{y\uparrow0}\Gamma(x,y)
\]
where the limit is taken in $X_c^-$. In an analogous way we obtain a
representation%
\begin{equation}\label{eq.G-}%
    u_- = \mathcal{G}_{-}(F)
        :=-F+\sum_{n=1}^{\infty} \mathcal{G}_{2n+1}^{-}
            (F,\overline{F}),
\end{equation}
where $\mathcal{G}_{n}^{-}$ are multilinear functions acting from~$(X_{1})^{n}$ to~$X_{c}^{-}$. An analogous argument shows:

\begin{lemma}
\label{lemma.GLM-}For any fixed $c\in\mathbb{R}$, the mapping
$\mathcal{G}_{-}:X_{1}\rightarrow X_{c}^{-}$ is continuous and admits the representation~(\ref{eq.G-}). Moreover, $\mathcal{G}_{-}-I$ is a continuous map of~$X_{1}$ into~$C(\mathbb{R})$.
\end{lemma}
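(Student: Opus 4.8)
The plan is to mirror the proof of Lemma~\ref{lemma.GLM+} step for step, replacing the half-line $\mathbb{R}^+$ by $\mathbb{R}^-$ throughout and using the reflection $\zeta\mapsto-\zeta$ to carry the left Gelfand--Levitan--Marchenko equation~(\ref{eq.GLM-}) into a problem of exactly the same structure as the right one. Concretely, writing $F$ for $F_-$ and $\Gamma$ for $\Gamma_{12}^-$, I would introduce the scalar operators $T_F(x)$ and $T_{\overline F}(x)$ on $L^1(\mathbb{R}^-)$ with kernels $F(x+t+\zeta)$ and $\overline F(x+t+\zeta)$ and integration over $(-\infty,0)$, and reformulate the displayed analogue of~(\ref{eq.Gamma}) as the $L^1(\mathbb{R}^-)$ identity $\Gamma(x,\,\cdot\,)+F_x-\bigl(T_F(x)\circ T_{\overline F}(x)\bigr)\Gamma(x,\,\cdot\,)=0$, where $F_x(\,\cdot\,):=F(x+\,\cdot\,)$.

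The first real step is to record the analogue of the estimates~(\ref{eq.TF.L2}). The $L^1\to L^1$ and $L^1\to L^2$ bounds follow from the same Minkowski/Schwarz computations as before. The one point requiring attention is the contraction bound $\|T_F(x)\|_{L^2(\mathbb{R}^-)\to L^2(\mathbb{R}^-)}\le\|\widehat F\|_\infty<1$, and this is the step I expect to require the most care, since it is the only place where the left case is not a purely formal transcription of the right one. I would deduce it from the half-line Hankel estimate already used for $T_F$ in the $+$~case by conjugating with the reflection isometry $(R\psi)(\zeta):=\psi(-\zeta)$ of $L^2(\mathbb{R}^-)$ onto $L^2(\mathbb{R}^+)$: under $R$ the operator $T_F(x)$ becomes a Hankel operator on $L^2(\mathbb{R}^+)$ with kernel $F(x-\sigma-\tau)$, whose symbol is $\widehat{F(x-\,\cdot\,)}$, and since $\|\widehat{F(x-\,\cdot\,)}\|_\infty=\|\widehat F\|_\infty$ the bound transfers verbatim.

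With the analogue of~(\ref{eq.TF.L2}) in hand, the inversion of $I-T_F(x)\circ T_{\overline F}(x)$ proceeds identically: the $L^2$ bound gives a convergent Neumann series on $L^2(\mathbb{R}^-)$; triviality of the kernel on $L^1(\mathbb{R}^-)$ follows because $T_F\circ T_{\overline F}$ maps $L^1$ into $L^2$; and compactness (by density from $C_0^\infty$) upgrades this to boundedness of the resolvent on $L^1(\mathbb{R}^-)$. Solving for $\Gamma(x,\,\cdot\,)\in X_0^-$ and setting $u_-(x)=\lim_{y\uparrow0}\Gamma(x,y)$ as in~(\ref{eq.u.recon.-}) then produces the multilinear expansion~(\ref{eq.G-}), whose leading term is $-F$ and whose tails $\mathcal{G}_{2n+1}^-(F,\overline F)$ are the $(\mathbb{R}^-)^{2n}$ analogue of the explicit integral written in the $+$~case.

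Finally, convergence of the series in $X_c^-$ is obtained from the two bounds of the $+$~case, reflected. The $L^\infty$ estimate~(\ref{eq.G2n+1.infty}), namely $\|\mathcal{G}_{2n+1}^-\|_\infty\le\rho^{2n-1}\|F\|_2^2$ with $\rho$ chosen so that $\|\widehat F\|_\infty<\rho<1$, transfers unchanged via the bound $\|H_{2n}\|_2\le\rho^{2n-1}\|F\|_2$ and the Schwarz inequality; it already shows that the remainder $\sum_{n\ge1}\mathcal{G}_{2n+1}^-(F,\overline F)$ converges in $L^\infty(\mathbb{R})$, and since each term is jointly continuous in its arguments this remainder is a continuous map of $X_1$ into $C(\mathbb{R})$, which — once the leading linear term $-F$ of~(\ref{eq.G-}) is separated off — is the last assertion of the lemma. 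For the $L^1(-\infty,c)$ estimate I would instead choose $x_0$ with $\int_{-\infty}^{x_0}|F(s)|\,\rmd s<\rho$ (an open condition on $F\in X_1$), obtain the analogue of~(\ref{eq.G2n+1.L1}), namely $\|\mathcal{G}_{2n+1}^-\|_{L^1(-\infty,x_0)}\le\rho^{2n+1}$, and combine it with the $L^\infty$ bound on the bounded interval $(x_0,c)$ exactly as in~(\ref{eq.G2n+1.L1a}) to get $\|\mathcal{G}_{2n+1}^-\|_{L^1(-\infty,c)}\le(\max\{c-x_0,0\}\|F\|_2^2+\rho^2)\rho^{2n-1}$. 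Summing in $n$ gives convergence in $L^1(-\infty,c)$ and, together with the $L^\infty$ bound, in $L^2(-\infty,c)$, hence in $X_c^-$; continuity of $\mathcal{G}_-\colon X_1\to X_c^-$ then follows from the continuity of the individual multilinear maps exactly as in Lemma~\ref{lemma.GLM+}.
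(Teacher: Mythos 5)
Your proposal is correct and is essentially the paper's own proof: the paper disposes of this lemma with the single phrase ``an analogous argument shows,'' referring back to the proof of Lemma~\ref{lemma.GLM+}, and your write-up is precisely that analogy carried out, including the one point that genuinely needs checking rather than transcribing --- the transfer of the $L^{2}$ contraction bound $\|T_{F}(x)\|_{L^{2}\rightarrow L^{2}}\leq\|\widehat{F}\|_{\infty}<1$ to the half-line $\mathbb{R}^{-}$, which you handle correctly by conjugating with the reflection isometry so that the operator becomes a Hankel operator on $L^{2}(\mathbb{R}^{+})$ with symbol of the same $L^{\infty}$ norm. The remaining steps (Neumann series on $L^{2}(\mathbb{R}^{-})$, upgrading invertibility to $L^{1}(\mathbb{R}^{-})$ via compactness, the multilinear expansion, and the $L^{\infty}$ and $L^{1}(-\infty,c)$ estimates with $x_{0}$ chosen so that $\int_{-\infty}^{x_{0}}|F|<\rho$) mirror the ``$+$'' case exactly as the paper intends.
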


With these continuity statements, we can prove Theorem \ref{thm.1}.

\begin{proofof}{Proof of Theorem~\ref{thm.1}}
We will discuss only the mapping $\mathcal{F}_+\circ\mathcal{S}_{+}$. To show it is injective, assume that $F_+\in X_1$ corresponds to~$u\in X$. Then, solving the Marchenko equation~\eref{eq.GLM+}, we find the kernel $\Gamma_+$ that by~\eref{eq.m+.rep} determines uniquely the Jost solution $\Psi_+$, which, in turn, gives the matrix potential~$\mathrm{Q}$ of~\eref{eq.Q}. Therefore different $u$ are mapped into different $F_+$, so that there is at most one potential~$u\in X$ with a given right reflection coefficient.

We next construct the inverse of~$\mathcal{F}_+\circ\mathcal{S}_{+}$.
Given $F$ in~$X_1$, we set $u_{+}=\mathcal{G}_{+}F$ and $u_{-} = \mathcal{G}_{-}\bigl(\mathcal{F}_- (\mathcal{I} \widehat F)\bigr)$. We need to show the following facts:
\begin{itemize}
\item[(1)] $u_{+}=u_{-}$, and
\item[(2)] the potential $u=u_{+}=u_{-}$ has reflection coefficients $r_{+}%
=\widehat{F}$ and $r_{-}=\mathcal{I}\widehat{F}$.
\end{itemize}
Then continuity of the mapping $F \mapsto u$ follows from those of $\mathcal G_\pm$ and~(1).  Analyticity of this map follows similarly from~(1), analyticity of $\mathcal G_-$ on $X_1^-$ and $\mathcal G_+$ on $X_{-1}^+$, and the
representation~$u = \zeta u_- + (1-\zeta) u_+$ for a function~$\zeta \in C^\infty$ with~$\zeta(t) =1$ for~$t<-1$ and~$\zeta(t)=0$ for~$t>1$. Fact~(2)
shows that the range of~$\mathcal{F}_+\circ\mathcal{S}_{+}$ is~$X_{1}$.

We will appeal to standard results for the Gelfand--Levitan--Marchenko equations
when $F\in S(\mathbb{R})$ (see for example Section~II.\,4 of
\cite{FT:2007}) and the continuity of the direct and inverse scattering maps established above. Suppose given $F\in X_{1}$ and
let $\left(F_{n}\right)$ be a sequence from $S(\mathbb{R})$ with
$F_{n}\rightarrow F$ in $X$. Note that, for $n$ sufficiently large, $F_{n}\in
X_{1}$ since $\widehat{F}_{n}\rightarrow\widehat{F}$ in $L^{\infty}%
(\mathbb{R})$. Let $F_{+}:=F$, define $F_{-}$ as $\mathcal{F}_-
(\mathcal{I}\widehat{F})$, and similarly let $F_{n,+}:=F_{n}$ and define
$F_{n,-}$ as $\mathcal{F}_-(\mathcal{I}\widehat{F}_{n})$. \

Now set $u_{n,\pm}:=\mathcal{G}_{\pm}F_{n,\pm}$. By standard theory, we have $u_{\pm,n}\in S(\mathbb{R})$ with $u_{+,n}=u_{-,n}=u_{n}$, and the potential
$u_{n}$ has associated reflection coefficients $\widehat{F}_{n,+}(s)$ and~$\widehat{F}_{n,-}(-s)$. By Lemmas~\ref{lemma.GLM+} and \ref{lemma.GLM-},
$u_{n,+}\rightarrow u_{+}$ in~$X_{c}^+$  and $u_{n,-}\rightarrow u_{-}$ in $X_{c}^-$, so that $u_{+}=u_{-}=u$ and $u_{n}\rightarrow u$ in $X$. By the continuity of the direct scattering maps of Lemma \ref{lemma.direct}, we can also conclude that $u$ has reflection coefficients $r_+(s)=\widehat{F}_{+}(s)$ and $r_-(s)=\widehat{F}_{-}(-s)=(\mathcal{I}\widehat{F})(s)$.
\end{proofof}

Now it is easy to deduce Corollary \ref{cor.Schroed}.

\begin{proofof}{Proof of Corollary \ref{cor.Schroed}}
We only need to prove that the maps are
real-analytic. Real analyticity of the direct maps follows immediately from Lemma~\ref{lemma.direct} and Remark~\ref{rem.real}. Real analyticity of the inverse
map follows similarly.
\end{proofof}

\ack
This material is based upon work supported by the National Science Foundation under Grant DMS-0710477 (CF, RH, and PP) and by the Deutsche Forschungsgemeinschaft under project 436 UKR 113/84 (RH and YM). RH acknowledges support from the College of Arts and Sciences at the University of Kentucky and thanks the Department of Mathematics at the University of Kentucky for hospitality during his stay there. RH, YM, and PP thank the Institut f\"ur angewandte Mathematik der Universit\"at Bonn for hospitality during part of the time that this work was done. PP thanks Percy Deift for helpful conversations and SFB 611 for financial support of his research visit to Universit\"at Bonn.

\appendix

\section{}\label{app.Wiener}

\def\thesection{\Alph{section}}

\setcounter{section}{1}

Let us denote by $\widehat{L^1}(\mathbb{R})$ the Wiener algebra of Fourier transforms~\eref{eq.Fourier} of functions in $L^{1}(\mathbb{R})$ with
norm~$\|\hat{f}\|_{\widehat{L^1}}:=\| f\|_{L^1}$, and by $\widehat{X}$ the Banach algebra that is the image of~$X = L^{1}(\mathbb{R})\cap L^{2}(\mathbb{R})$ under the Fourier transform,
equipped with the norm $\| \hat{f}\| _{\widehat{X}}:=\|f\|_{X}$. We also denote by~$1 \dotplus\widehat{X}$ the unital extension of~$\widehat{X}$ obtained by adding the constant functions and norming $1 \dotplus\widehat{X}$ with the norm%
\[
    \| c+\hat{f}\|_{1 \dotplus\widehat{X}}
        = |c| + \| \hat{f}\| _{\widehat{X}};
\]
we similarly define~$1\dotplus \widehat{L^1}(\mathbb{R})$.
The Fourier transform extends to $1 \dotplus\widehat{X}$ by mapping the
constant $1$ into the convolution identity~$\delta$.

We will need the following results.

\begin{lemma}
Suppose that $f=\alpha+\widehat{h}\in 1 \dotplus\widehat{X}$. Then $f$ is invertible
in the Banach algebra~$1 \dotplus\widehat{X}$ if and only if $f$ is non-vanishing on $\mathbb{R}$ and
$\alpha\neq0$.
\end{lemma}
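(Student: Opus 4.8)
The plan is to prove the two implications separately: necessity is routine and follows by applying characters, while sufficiency rests on the classical Wiener $1/f$ theorem together with a short bootstrapping argument that upgrades $L^1$-membership of the inverse to $L^2$-membership.

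For necessity, I would first note that the inclusion $1 \dotplus \widehat{X} \hookrightarrow 1 \dotplus \widehat{L^1}(\mathbb{R})$ is a norm-decreasing unital algebra homomorphism, since $X \subset L^1(\mathbb{R})$ and the Fourier transform carries convolution to pointwise multiplication. For each $s \in \mathbb{R}$, point evaluation $f = \alpha + \widehat{h} \mapsto f(s) = \alpha + \widehat{h}(s)$ is then a character of $1 \dotplus \widehat{X}$, and the coefficient map $f \mapsto \alpha = \lim_{|s|\to\infty} f(s)$, which is well defined by the Riemann--Lebesgue lemma, is a character as well. Applying these characters to a relation $f g = 1$ forces $f(s) \neq 0$ for every $s \in \mathbb{R}$ and $\alpha \neq 0$, giving necessity.

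For sufficiency, assume $\alpha \neq 0$ and $f$ non-vanishing on $\mathbb{R}$. Since $f \in 1 \dotplus \widehat{L^1}(\mathbb{R})$, Wiener's theorem produces an inverse $g = \beta + \widehat{k}$ in $1 \dotplus \widehat{L^1}(\mathbb{R})$ with $k \in L^1(\mathbb{R})$, and comparing the unit coefficients in $f g = 1$ shows $\beta = 1/\alpha$. The one remaining point, which I expect to be the main obstacle, is to improve $k \in L^1(\mathbb{R})$ to $k \in X = L^1(\mathbb{R}) \cap L^2(\mathbb{R})$; once this is done, $g = 1/\alpha + \widehat{k}$ lies in $1 \dotplus \widehat{X}$ and inverts $f$ there. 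I would resolve this by equating the $\widehat{L^1}$-parts of $f g = 1$ and passing back to the convolution picture, which yields the identity
\[
    k = -\frac{1}{\alpha^{2}}\, h - \frac{1}{\alpha}\, (h * k).
\]
Here $h \in X \subset L^2(\mathbb{R})$ while $k \in L^1(\mathbb{R})$, so Young's inequality $\|h * k\|_{2} \leq \|h\|_{2}\|k\|_{1}$ shows that both terms on the right-hand side belong to $L^2(\mathbb{R})$. Hence $k \in L^2(\mathbb{R})$, and together with $k \in L^1(\mathbb{R})$ this gives $k \in X$, completing the argument.
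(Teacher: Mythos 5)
Your proof is correct and takes essentially the same route as the paper's: reduce to the classical Wiener theorem in $1 \dotplus \widehat{L^1}(\mathbb{R})$, identify the constant term of the inverse as $1/\alpha$, and then bootstrap the $L^1$ inverse up to $L^2$. The only cosmetic difference is that you run the bootstrap on the convolution side via Young's inequality, using $k = -\alpha^{-2}h - \alpha^{-1}(h*k)$, whereas the paper works on the Fourier side, writing (for $\alpha=1$) $\widehat{g} = -(1+\widehat{h})^{-1}\widehat{h}$ and using boundedness of $(1+\widehat{h})^{-1}$ together with the fact that $\widehat{h}\in L^2$ --- the same identity in two guises.
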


\begin{proof}
If $f$ is invertible in $1 \dotplus\widehat{X}$ it is also invertible in
$1 \dotplus\widehat{L^1}(\mathbb{R})$, so the condition is necessary by the Wiener theorem. If, on the other hand, $f$ does not vanish on $\mathbb{R}$ and $\alpha\neq0$, then~$f$ is invertible in~$1 \dotplus\widehat{L^1}(\mathbb{R})$ with $f^{-1}=\alpha^{-1}+\widehat{g}$ for~$g\in L^{1}(\mathbb{R})$. We need to check that $g\in L^{2}(\mathbb{R})$. Without loss we take $\alpha=1$ and compute that $\widehat{g}=-(  1+\widehat{h})^{-1}\widehat{h}$, which shows that~$g\in L^{2}(\mathbb{R})$ as required.
\end{proof}

We now have an analogue of the Wiener--Levi theorem for $1 \dotplus\widehat{X}$.

\begin{lemma}\label{lem:WL}
Assume that $f\in 1 \dotplus\widehat{X}$ and that $\phi$ is a function that
is analytic in an open neighborhood $\Omega$ of the closure of the range
of~$f$. Then $\phi\circ f\in 1 \dotplus\widehat{X}$ and, moreover, the map
\[
    f\mapsto\phi\circ f
\]
is an analytic map from $1 \dotplus\widehat{X}$ into itself when restricted to
functions with range contained in a fixed compact subset of $\Omega$.
\end{lemma}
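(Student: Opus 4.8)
The plan is to realize $\phi \circ f$ as the output of the holomorphic (Riesz--Dunford) functional calculus in the commutative unital Banach algebra $1 \dotplus \widehat{X}$, and then to identify that abstract element with the pointwise composition by testing against evaluation characters. The key point that lets the calculus close up inside the \emph{smaller} algebra $1 \dotplus \widehat{X}$, rather than merely in $1 \dotplus \widehat{L^1}(\mathbb{R})$, is the preceding invertibility lemma.

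First I would locate the spectrum of $f = \alpha + \widehat{h}$. By the preceding lemma, $f - \lambda$ fails to be invertible exactly when $f(s) = \lambda$ for some $s \in \mathbb{R}$ or $\alpha = \lambda$. Since $\widehat{h}$ is the Fourier transform of an $L^1$-function, the Riemann--Lebesgue lemma gives $f(s) \to \alpha$ as $|s| \to \infty$, so $\alpha$ already belongs to $\overline{f(\mathbb{R})}$; hence $\mathrm{spec}(f) = \overline{f(\mathbb{R})}$, the closure of the range. As $\phi$ is analytic on the neighborhood $\Omega \supseteq \mathrm{spec}(f)$, I fix a cycle $\Gamma$ in $\Omega \setminus \mathrm{spec}(f)$ enclosing $\mathrm{spec}(f)$ and set
\[
    \phi(f) := \frac{1}{2\pi\rmi} \oint_{\Gamma} \phi(\lambda)\,(\lambda - f)^{-1}\,\rmd\lambda.
\]
Each resolvent $(\lambda - f)^{-1}$ lies in $1 \dotplus \widehat{X}$ by the invertibility lemma (this is exactly where the $L^2$ part is controlled, via the verification that $g\in L^2$ in its proof), and the integrand is a continuous $1 \dotplus \widehat{X}$-valued function of $\lambda$ over the compact cycle, so the integral converges in norm and $\phi(f) \in 1 \dotplus \widehat{X}$.

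To identify $\phi(f)$ with the pointwise composition, I would apply, for each fixed $s \in \mathbb{R}$, the evaluation character $\mathrm{ev}_s : \alpha + \widehat{h} \mapsto f(s)$, which is a continuous algebra homomorphism. Commuting $\mathrm{ev}_s$ with the integral and using $\mathrm{ev}_s\bigl((\lambda - f)^{-1}\bigr) = (\lambda - f(s))^{-1}$ together with the scalar Cauchy integral formula gives $\mathrm{ev}_s(\phi(f)) = \phi(f(s))$. Hence $\phi(f)$ and $\phi \circ f$ agree pointwise on $\mathbb{R}$, so that $\phi \circ f \in 1 \dotplus \widehat{X}$ as claimed.

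For analyticity, I would restrict to $f$ with range in a fixed compact $K \subset \Omega$, so that $\mathrm{spec}(f) = \overline{f(\mathbb{R})} \subseteq K$ and a single cycle $\Gamma \subset \Omega \setminus K$ serves for all such $f$. Writing $f = f_0 + g$ and expanding the resolvent in the series $(\lambda - f)^{-1} = \sum_{n \geq 0} (\lambda - f_0)^{-1}\bigl[g\,(\lambda - f_0)^{-1}\bigr]^{n}$, which converges uniformly for $\lambda \in \Gamma$ once $\|g\|$ is small, and substituting into the Cauchy integral yields a norm-convergent expansion of $\phi(f)$ in bounded homogeneous polynomials of $g$; this is precisely the local power-series representation demanded by the notion of analyticity. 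I expect the main obstacle to be the spectral bookkeeping in the first step: one must be certain that the value $\alpha$ ``at infinity'' is absorbed into $\overline{f(\mathbb{R})}$, so that $\Omega$ genuinely contains all of $\mathrm{spec}(f)$ and the cycle can be drawn inside $\Omega$. Once this is settled, the remaining steps are routine Banach-algebra functional calculus.
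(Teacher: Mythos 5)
Your proposal is correct and follows essentially the same route as the paper: the paper's proof consists precisely of observing (via the preceding invertibility lemma) that the spectrum of $f$ in $1 \dotplus \widehat{X}$ coincides with the closure of its range, and then invoking the standard holomorphic functional calculus for Banach algebras. Your write-up simply makes explicit the details the paper leaves as ``standard'' --- the Riemann--Lebesgue absorption of $\alpha$ into $\overline{f(\mathbb{R})}$, the Riesz--Dunford integral, the identification of $\phi(f)$ with the pointwise composition via evaluation characters, and the resolvent expansion giving analyticity --- all of which are sound.
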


\begin{proof}
It suffices to note that, according to the above, the spectrum of~$f$
in~$1 \dotplus\widehat{X}$ coincides with the closure of its range. Then the
standard functional calculus for Banach algebras applies, thus yielding the result.
\end{proof}

\begin{lemma}\label{lem.anal}
Assume that $U$ is an open subset of $1\dotplus \widehat{X}$ and that  $\mathit{\Psi} :U \to 1 \dotplus \widehat{X}$ is analytic in $f$ and $\overline{f}$ in the sense of Definition 2.1. Let also $\phi$ be a complex-valued function that is analytic in a neighbourhood of the closure of the set
\(
	\bigcup_{f\in U} \Ran \mathit{\Psi}(f).
\)
Then the map $f \mapsto \phi\circ\mathit{\Psi}(f)$ from $U$ into $1\dotplus \widehat{X}$ is analytic in~$f$ and $\overline{f}$.
\end{lemma}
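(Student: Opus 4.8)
The plan is to combine the two preceding lemmas, namely Lemma~\ref{lem:WL} (the Wiener--Levi theorem for $1\dotplus\widehat{X}$) and the hypothesis that $\mathit{\Psi}$ is analytic in $f$ and $\overline{f}$, by regarding $\phi\circ\mathit{\Psi}$ as the composition of two analytic maps. First I would unwind Definition~\ref{def.anal}: the assumption that $\mathit{\Psi}$ is analytic in $f$ and $\overline{f}$ means there is an honestly analytic map $\mathit{\Psi}_2:U\times U\to 1\dotplus\widehat{X}$ with $\mathit{\Psi}(f)=\mathit{\Psi}_2(f,\overline{f})$. To prove that $\phi\circ\mathit{\Psi}$ is analytic in $f$ and $\overline{f}$, it suffices to exhibit a genuinely analytic map $\mathit{\Phi}_2:U\times U\to 1\dotplus\widehat{X}$ with $(\phi\circ\mathit{\Psi})(f)=\mathit{\Phi}_2(f,\overline{f})$. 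The natural candidate is $\mathit{\Phi}_2(f,g):=\phi\circ\mathit{\Psi}_2(f,g)$, where on the right $\phi$ is applied pointwise to an element of $1\dotplus\widehat{X}$ via the Banach-algebra functional calculus.

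The key steps, in order, are as follows. First I would verify that $\mathit{\Phi}_2$ is well defined, i.e.\ that $\phi\circ\mathit{\Psi}_2(f,g)$ makes sense for all $(f,g)\in U\times U$: this requires that the range of each $\mathit{\Psi}_2(f,g)$ lie inside the neighbourhood on which $\phi$ is analytic. Here is where I must be a little careful, because the hypothesis only controls $\bigcup_{f\in U}\Ran\mathit{\Psi}(f)$, which is $\{\Ran\mathit{\Psi}_2(f,\overline{f}):f\in U\}$ along the ``diagonal'' $g=\overline{f}$, whereas $\mathit{\Phi}_2$ needs $\phi$ defined on the ranges of $\mathit{\Psi}_2(f,g)$ for independent $f,g$. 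I would address this by shrinking to a suitable open neighbourhood: since $\mathit{\Psi}_2$ is continuous and the diagonal ranges stay in a fixed compact subset $K$ of the domain of $\phi$, for each diagonal point $(f_0,\overline{f_0})$ there is a product neighbourhood on which $\mathit{\Psi}_2$ takes values with ranges contained in a slightly larger compact subset still inside the domain of $\phi$; this uses the continuity of $\mathit{\Psi}_2$ together with the fact that on $1\dotplus\widehat{X}$ the range of an element is controlled by its norm (the spectrum coincides with the closure of the range, as noted in the proof of Lemma~\ref{lem:WL}). Analyticity is a local property, so working on such neighbourhoods of each diagonal point is enough.

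The second step is to show $\mathit{\Phi}_2$ is analytic on this shrunken domain. This is exactly the composition of the analytic map $\mathit{\Psi}_2$ with the map $h\mapsto\phi\circ h$, and the latter is analytic on functions with range in a fixed compact subset of $\Omega$ precisely by Lemma~\ref{lem:WL}. Composition of analytic maps between (open subsets of) Banach spaces is analytic, so $\mathit{\Phi}_2=(\phi\circ\,\cdot\,)\circ\mathit{\Psi}_2$ is analytic. Finally, restricting $\mathit{\Phi}_2$ to the diagonal $g=\overline{f}$ recovers $\phi\circ\mathit{\Psi}$, and by Definition~\ref{def.anal} this establishes that $f\mapsto\phi\circ\mathit{\Psi}(f)$ is analytic in $f$ and $\overline{f}$, as claimed.

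I expect the main obstacle to be the bookkeeping in the first step: the compact-subset hypothesis on $\phi$ is phrased for the diagonal ranges $\Ran\mathit{\Psi}(f)$, but the functional calculus must be applied to the off-diagonal values $\mathit{\Psi}_2(f,g)$ in order to get honest (not merely ``in $f$ and $\overline{f}$'') analyticity. Reconciling these — by passing to a product neighbourhood of the diagonal on which the relevant ranges remain in a compact subset of $\Omega$, and invoking that analyticity is local — is the one genuinely non-formal point; everything else is a direct appeal to Lemma~\ref{lem:WL} and the stability of analyticity under composition.
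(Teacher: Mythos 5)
Your proposal is correct and takes essentially the same route as the paper: the paper's own proof is exactly the composition $(\phi\circ\,\cdot\,)\circ\mathit{\Psi}_2$, with Lemma~\ref{lem:WL} supplying analyticity of the outer map $g\mapsto\phi\circ g$ and stability of analyticity under composition doing the rest. Where you differ is in thoroughness rather than in method: the paper disposes of the lemma in three lines and silently passes over precisely the two points you isolate, namely that Lemma~\ref{lem:WL} only gives analyticity on sets of functions whose ranges stay in a fixed compact subset of the domain of $\phi$, and that the hypothesis controls only the diagonal ranges $\Ran\mathit{\Psi}_2(f,\overline{f})$ whereas the functional calculus must be applied to the off-diagonal values $\mathit{\Psi}_2(f,g)$; your localization near the anti-diagonal, using continuity of $\mathit{\Psi}_2$ and the fact that $\sup|h|$ is dominated by the norm of $h$ in $1\dotplus\widehat{X}$, is the right way to close this and makes your write-up more complete than the paper's. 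The one residual caveat, which applies equally to the paper's proof, is that Definition~\ref{def.anal} as literally stated demands $\mathit{\Psi}_2$ on all of $U\times U$, while your construction (and any construction, since $\phi\circ\mathit{\Psi}_2(f,g)$ need not even be defined far from the diagonal) yields it only on a neighbourhood of the anti-diagonal, so the conclusion must be read in the local sense or with $U$ shrunk accordingly.
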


\begin{proof} By Lemma~\ref{lem:WL}, $g \mapsto \phi\circ g$ is an analytic mapping in the Banach algebra $1 \dotplus \widehat{X}$ defined on $\mathit{\Psi}(U)$. It is therefore analytic in $1 \dotplus\widehat{X}$ considered as a Banach space. Thus the mapping $f \mapsto \phi \circ \mathit{\Psi}(f)$ is analytic in $f$ and $\overline{f}$ as a composition of analytic mappings between Banach spaces.
\end{proof}

\section*{References}


\begin{thebibliography}{99}                                                                                               %
\bibitem {AKNS:1974} Ablowitz, M. J.; Kaup, D. J.; Newell, A. C.; Segur, H.
    The inverse scattering transform-Fourier analysis for nonlinear problems.
    \emph{Stud. Appl. Math.} \textbf{53} (1974), 249--315.

\bibitem {AKM:1993} Aktosun, T.; Klaus, M.; van der Mee, C. On the Riemann-Hilbert problem for the one-dimensional Schr\"odinger equation.  \emph{J. Math. Phys.}  \textbf{34}  (1993),  no.~7, 2651--2690.

\bibitem {AKM:2000} Aktosun, T.; Klaus, M.; van der Mee, C. Direct and inverse scattering for selfadjoint Hamiltonian systems on the line.  \emph{Integr. Equat. Operator Theory}  \textbf{38}  (2000),  no.~2, 129--171.

\bibitem {AHM:2005}Albeverio, S.; Hryniv, R.; Mykytyuk, Ya. V.
    Inverse spectral problems for Dirac operators with summable potentials.
    \emph{Russ. J. Math. Phys.} \textbf{12} (2005), no.~4, 406--423.

\bibitem {BC:1984}Beals, R.; Coifman, R. R.
    Scattering and inverse scattering for first order systems.
    \emph{Comm. Pure Appl. Math.} \textbf{37} (1984), no.~1, 39--90.

\bibitem {BC:1985}Beals, R.; Coifman, R. R.
    Inverse scattering and evolution equations.
    \emph{Comm. Pure Appl. Math.} \textbf{38} (1985), no.~1, 29--42.

\bibitem {BC:1987}Beals, R.; Coifman, R. R.
    Scattering and inverse scattering for first-order systems. II.
    \emph{Inverse Problems} \textbf{3} (1987), no.~4, 577--593.

\bibitem {BDT:1988}Beals, R.; Deift, P.; Tomei, C.
    \emph{Direct and Inverse Scattering on the Line}.
    Mathematical Surveys and Monographs, 28.
    American Mathematical Society, Providence, RI, 1988.

\bibitem{BDL:2000} Bodenstorfer, B.; Dijksma, A.; Langer, H.
    Dissipative eigevalue problems for a Sturm--Liouville operator with a singular potential.
    \emph{Proc. Royal Soc. Edinb.} \textbf{130A} (2000), 1237--1257.

\bibitem {BHP:2009}Brown, R.; Hryniv, R.; Perry, P.
    Solutions of the nonlinear Schr\"{o}dinger equation
    with singular initial data by the method of inverse scattering.
    In preparation.

\bibitem {CK:1992}Cohen, A.; Kappeler, T.
    Solutions to the cubic Schr\"{o}dinger equation
        by the inverse scattering method.
    \emph{SIAM J. Math. Anal.} \textbf{23} (1992), no.~4, 900--922.

\bibitem{DS:1987} Degasperis, A.; Sabatier, P. C.
    Extension of the one-dimensional scattering theory, and ambiguities.  \emph{Inverse Problems} \textbf{3} (1987),  no.~1, 73--109.

\bibitem {DT:1979}Deift, P.; Trubowitz, E.
    Inverse scattering on the line.
    \emph{Comm. Pure Appl. Math.} \textbf{32} (1979), no.~2, 121--251.

\bibitem {DZ:2003}Deift, P.; Zhou, X.
    Long-time asymptotics for solutions of the NLS equation
        with initial data in a weighted Sobolev space.
        \emph{Comm. Pure Appl. Math.} \textbf{56} (2003), no.~8, 1029--1077.

\bibitem {Dvdm:2008}Demontis, F.; van der Mee, C.
    Scattering operators for matrix Zakharov--Shabat systems.
    \emph{Integral Equ. Oper. Theory} \textbf{62} (2008), 517--540.

\bibitem{Dineen:1981}
    Dineen S 1981 Complex Analysis in Locally Convex Spaces (Amsterdam: North-Holland)


\bibitem{Faddeev:1959}
    Faddeev, L.~D.
    The inverse problem in the quantum theory of scattering. (Russian)
    \emph{Uspekhi Mat. Nauk} \textbf{14} (1959), 57--119;
        Engl. transl. in \emph{J. Math. Phys.}  \textbf{4} (1963), 72--104.

\bibitem {FT:2007}Faddeev, L.~D.; Takhtajan, L.~A.
    \emph{Hamiltonian Methods in the Theory of Solitons.}
    Translated from the 1986 Russian original by Alexey
    G. Reyman. Reprint of the 1987 English edition.
    Classics in Mathematics. Springer, Berlin, 2007.

\bibitem{FLM:1995} Fischer, W.; Leschke, H.; M\"uller, P.
    The functional–analytic versus the functional-integral approach to quantum Hamiltonians: The one-dimensional hydrogen atom.
    \emph{J. Math. Phys.} \textbf{36} (1995), 2313--2323.


\bibitem {Frayer:2008}Frayer, C.
    Doctoral thesis, University of Kentucky, 2008.

\bibitem {GK:1970}Gohberg, I. C.; Kre\u{\i}n, M. G.
    \emph{Theory and Applications of Volterra Operators in Hilbert Space.}
    Translated from the 1967 Russian original by A. Feinstein. Translations of Mathematical Monographs, Vol. 24, American Mathematical Society, Providence, R.I. 1970.


\bibitem{Hartman:1964} Hartman, P.
    \emph{Ordinary Differential Equations}. John Wiley \& Sons, Inc., New York--London--Sydney. 1964

\bibitem{HM:2001} Hryniv, R. and  Mykytyuk, Ya. V.
    1-D Schr\"odinger operators with periodic singular potentials,
    \emph{Methods Funct. Anal. Topology} \textbf{7} (2001), no.~4, 31--42.

\bibitem {HMP:2008a} Hryniv, R.; Mykytyuk, Ya. V.; Perry, P.
    Inverse scattering for Schr\"odinger operators with Miura potentials, II.
    Different Riccati representatives.
    (Submitted)

\bibitem {HMP:2008b} Hryniv, R.; Mykytyuk, Ya. V.; Perry, P.
    Inverse scattering for Schr\"odinger operators with Miura potentials, III.
    Bound states.
    (In preparation).


\bibitem {KPST:2005} Kappeler, T.; Perry, P.; Shubin, M.; Topalov, P.
    The Miura map on the line.
    \emph{Int. Math. Res. Not.} (2005), no. 50, 3091--3133.

\bibitem{Kurasov:1996} Kurasov, P.
    On the Coulomb potential in one dimension.
    \emph{J. Phys. A} \textbf{29} (1996), 1767--1771.


\bibitem{Levitan:1979} Levitan, B. M.
    Sufficient conditions for the solvability of the inverse problem of scattering theory on the entire line.  (Russian)
    \emph{Mat. Sb. (N.S.)} \textbf{108(150)} (1979), no.~3, 350--357.

\bibitem {Levitan:1987}Levitan, B. M.
    \emph{Inverse Sturm--Liouville Problems.} Nauka, Moscow, 1984. (Russian)
    English translation by O. Efimov. VSP, Zeist, 1987.

\bibitem {Marchenko:1986}Marchenko, V. A.
    \emph{Sturm--Liouville Operators and Applications.}  Naukova Dumka, Kiev, 1977. (Russian)
    English translation by A. Iacob.
    Operator Theory: Advances and Applications, 22.
    Birkh\"{a}user Verlag, Basel, 1986.

\bibitem {Melin:1985}Melin, A.
    Operator methods for inverse scattering on the real line.
    \emph{Comm. Partial Differential Equations} \textbf{10} (1985), no. 7, 677--766.

\bibitem {Miura:1968}Miura, R. M.
    Korteweg-de Vries equation and generalizations, I:
    a remarkable explicit nonlinear transformation.
    \emph{J. Math. Phys.} \textbf{9} (1968), 1202--1204.

\bibitem {Mykytyuk:2003}Mykytyuk, Ya. V.
    Factorization of Fredholm operators. (Ukrainian)
    \emph{Mat. Stud.} \textbf{20} (2003), no.~2, 185--199.

\bibitem {Mykytyuk:2004}Mykytyuk, Ya. V.
    Factorization of Fredholm operators in operator algebras. (Ukrainian)
    \emph{Mat. Stud.} \textbf{21} (2004), no.~1, 87--97.

\bibitem{Nachbin:1969}
    Nachbin L 1969 Topology on Spaces of Holomorphic Mappings
    (Ergebnisse der Mathematik und ihrer Grenzgebiete, vol. 47) (Springer: Berlin).

\bibitem {Novikov:1997}Novikov, R. G.
    Inverse scattering up to smooth functions for the Dirac-ZS-AKNS system.
    \emph{Selecta Math. (N.S.)} \textbf{3} (1997), no.~2, 245--302.

\bibitem{PT:1987}
    P\"oschel J and Trubowitz E 1987 Inverse Spectral Theory (Pure Appl. Math. vol 130) (Orlando, FL: Academic)

\bibitem {Shabat:1975}Shabat, A. B.
    Inverse scattering problem for a system of differential equations.
    \emph{Funkt. Anal. i Pril.} \textbf{9} (1975), no.3,  75--78 (Russian);
    English transl. in. \emph{Funct. Anal. Appl.} \textbf{9} (1975), 244--247.

\bibitem {Shabat:1979} Shabat, A. B.
    An inverse scattering problem.
    \emph{Diff. Uravn.} \textbf{15} (1979), 1824--1834 (Russian);
    English transl. in. \emph{Diff. Equ.} \textbf{15} (1980), 1299--1307.

\bibitem {Wadati:1973}Wadati, M.
    The modified Korteweg-de Vries equation.
    \emph{J. Phys. Soc. Japan} \textbf{34} (1973), no.~5, 1289--1296.

\bibitem {ZS:1971}Zakharov, V. E.; Shabat, A. B.
    Exact theory of two-dimensional self-focussing and one-dimensional self-modulation of waves in nonlinear media.
     \emph{Zh. Eksp. Teor. Fiz.} \textbf{61} (1971), 118--134 (Russian);
        English transl. in. \emph{Soviet Phys. JETP} \textbf{34} (1972), 62--69.


\bibitem {Zhou:1998}Zhou, X.
    $L^{2}$-Sobolev space bijectivity of the scattering and inverse scattering transforms.
    \emph{Comm. Pure Appl. Math.} \textbf{51} (1998), no.~7, 697--731.
\end{thebibliography}
\end{document}